\newtheorem{theorem}{Theorem}[section]
\newtheorem{proposition}[theorem]{Proposition}
\newtheorem{lemma}[theorem]{Lemma}
\newtheorem{corollary}[theorem]{Corollary}
\theoremstyle{definition}
\newtheorem{example}[theorem]{Example}
\newtheorem{definition}[theorem]{Definition}
\newtheorem{problem}[theorem]{Problem}
\begin{document}

\author[P. Danchev]{Peter Danchev}
\address{Institute of Mathematics and Informatics, Bulgarian Academy of Sciences, 1113 Sofia, Bulgaria}
\email{danchev@math.bas.bg; pvdanchev@yahoo.com}
\author[A. Javan]{Arash Javan}
\address{Department of Mathematics, Tarbiat Modares University, 14115-111 Tehran Jalal AleAhmad Nasr, Iran}
\email{a.darajavan@modares.ac.ir; a.darajavan@gmail.com}
\author[A. Moussavi]{Ahmad Moussavi}
\address{Department of Mathematics, Tarbiat Modares University, 14115-111 Tehran Jalal AleAhmad Nasr, Iran}
\email{moussavi.a@modares.ac.ir; moussavi.a@gmail.com}

\title{\small Rings with \large$u^n-1$ \small nilpotent for each unit \large$u$}
\keywords{nilpotent, UU-ring, (strongly) $\pi$-regular ring, (strongly) nil-clean ring}
\subjclass[2010]{16S34, 16U60}

\maketitle




\begin{abstract}
We continue the study in-depth of the so-called $n$-UU rings for any $n\geq 1$, that were defined by the first-named author in Toyama Math. J. (2017) as those rings $R$ for which $u^n-1$ is always a nilpotent for every unit $u\in R$. Specifically, for any $n\geq 2$, we prove that a ring is strongly $n$-nil-clean if, and only if, it is simultaneously strongly $\pi$-regular and an $(n-1)$-UU ring. This somewhat extends results due to Diesl in J. Algebra (2013), Abyzov in Sib. Math. J. (2019) and Cui-Danchev in J. Algebra Appl. (2020). Moreover, our results somewhat improves the ones obtained by Ko\c{s}an et al. in Hacettepe J. Math. Stat. (2020).
\end{abstract}

\section{Introduction and Motivation}

Throughout the present paper, let $R$ be an associative but {\it not} necessarily commutative ring with identity element often stated as $1$. As usual, for such a ring $R$, the letters $U(R)$ and $\rm{Nil}(R)$ are reserved for the unit group and the set of nilpotent elements in $R$, respectively. An element $r\in R$ is said to be {\it $n$-potent} for some $n\geq 2$, provided $r^n=r$. In the case where $n=2$, the element $r$ is known as {\it idempotent}. The set of all idempotents in $R$ is designed by ${\rm Id}(R)$. Standardly, $J(R)$ denotes the Jacobson radical of $R$.

Likewise, $\mathbb{Z}_n\cong \mathbb{Z}/(n)$ is identified with the ring of remainders of integer division by $n$, and we denote by ${\rm M}_n(R)$ the full matrix ring of size $n$ over $R$.

For all other unexplained explicitly notions and notations, we refer to \cite{lam1991first}.

The key instruments for our successful work are these.

\begin{definition}
Suppose $R$ is an arbitrary ring. Then, the following notions appeared in the existing literature:
\begin{enumerate}
 \item A ring $R$ is called {\it UU} if, for any $u \in U(R)$, $u \in 1 + {\rm Nil}(R)$ (see \cite{danchev2016rings}).
 \item A ring $R$ is called {\it $\pi$-UU} if, for any $u \in U(R)$, there exists $i \in N$ depending on $u$ such that $u^i \in 1 + {\rm Nil}(R)$ (see \cite{danchev2017exchange}).
 \item Let $n \in N$ be fixed. A ring $R$ is called $n$-UU if, for any $u \in U(R)$, $u^n \in 1 + {\rm Nil}(R)$ (see \cite{danchev2017exchange}).
 \item A ring R is called {\it strongly $n$-nil-clean} if every element of $R$ is a sum of an $n$-potent and a nilpotent that commute each other (see \cite{abyzov2019strongly}).
\end{enumerate}
\end{definition}

According to the above definitions, we observe that every UU ring is an $n$-UU ring and that every $n$-UU ring is a $\pi$-UU ring. Also, it is easy to see that if $R$ is a finite $\pi$-UU ring, then a number $n \in \mathbb{N}$ can be found such that $R$ is an $n$-UU ring.

Besides, strongly $n$-nil-clean rings are always {\it periodic} in the sense that their elements satisfy the eaquation $x^i = x^j$ for some $i>j\geq 1$.

The diagram listed below clearly demonstrates the above relations:

\begin{center}

\tikzset{every picture/.style={line width=0.75pt}} 

\begin{tikzpicture}[x=0.75pt,y=0.75pt,yscale=-1,xscale=1]

\draw    (270,75) -- (303,75) ;
\draw [shift={(306,75)}, rotate = 180] [fill={rgb, 255:red, 0; green, 0; blue, 0 }  ][line width=0.08]  [draw opacity=0] (8.04,-3.86) -- (0,0) -- (8.04,3.86) -- (5.34,0) -- cycle    ;
\draw    (449,75) -- (482,75) ;
\draw [shift={(485,75)}, rotate = 180] [fill={rgb, 255:red, 0; green, 0; blue, 0 }  ][line width=0.08]  [draw opacity=0] (8.04,-3.86) -- (0,0) -- (8.04,3.86) -- (5.34,0) -- cycle    ;
\draw    (430,130) -- (483,130) ;
\draw [shift={(486,130)}, rotate = 180] [fill={rgb, 255:red, 0; green, 0; blue, 0 }  ][line width=0.08]  [draw opacity=0] (8.04,-3.86) -- (0,0) -- (8.04,3.86) -- (5.34,0) -- cycle    ;
\draw    (250,130) -- (325,130.96) ;
\draw [shift={(328,131)}, rotate = 180.73] [fill={rgb, 255:red, 0; green, 0; blue, 0 }  ][line width=0.08]  [draw opacity=0] (8.04,-3.86) -- (0,0) -- (8.04,3.86) -- (5.34,0) -- cycle    ;
\draw    (206,90) -- (206.87,110) ;
\draw [shift={(207,113)}, rotate = 267.51] [fill={rgb, 255:red, 0; green, 0; blue, 0 }  ][line width=0.08]  [draw opacity=0] (8.04,-3.86) -- (0,0) -- (8.04,3.86) -- (5.34,0) -- cycle    ;
\draw    (519,90) -- (519.87,110) ;
\draw [shift={(520,113)}, rotate = 267.51] [fill={rgb, 255:red, 0; green, 0; blue, 0 }  ][line width=0.08]  [draw opacity=0] (8.04,-3.86) -- (0,0) -- (8.04,3.86) -- (5.34,0) -- cycle    ;
\draw    (379,90) -- (379.87,110) ;
\draw [shift={(380,113)}, rotate = 267.51] [fill={rgb, 255:red, 0; green, 0; blue, 0 }  ][line width=0.08]  [draw opacity=0] (8.04,-3.86) -- (0,0) -- (8.04,3.86) -- (5.34,0) -- cycle    ;

\draw (207.5,73) node   [align=left] {Strongly nil-clean};
\draw (378,73) node   [align=left] {Strongly $n$-nil-clean};
\draw (521.5,72) node   [align=left] {Periodic};
\draw (176,122) node [anchor=north west][inner sep=0.75pt]   [align=left] {UU rings};
\draw (346,123) node [anchor=north west][inner sep=0.75pt]   [align=left] {$(n-1)$-UU};
\draw (495,121) node [anchor=north west][inner sep=0.75pt]   [align=left] {$\displaystyle \pi -$UU};

\end{tikzpicture}

\end{center}

Note that the reverse implications hold if $R$ is strongly $\pi$-regular. Also, to understand this situation better, we have prepared the following table in order to characterize some plain examples of such types of rings:

\begin{table}[h!]
    \centering
    \begin{tabular}{c|c|c|c|c|c|c}
                                       & 2-UU     & 3-UU     & UU       & $\pi$-UU     & 6-UU         & 8-UU         \\ \hline
      $\mathbb{Z}$        & $\checkmark$ & $\times$     & $\times$ & $\checkmark$ & $\checkmark$ & $\checkmark$ \\
      $\mathbb{Z}_5$ & $\times$ & $\times$ & $\times$ & $\checkmark$ & $\times$ & $\checkmark$     \\
      $\mathbb{Z}_7$ & $\times$ & $\times$ & $\times$ & $\checkmark$ & $\checkmark$  & $\times$ \\
      ${\rm M}_2(\mathbb{Z}_2)$ & $\times$     & $\checkmark$ & $\times$ & $\checkmark$ & $\checkmark$     & $\times$ \\
      ${\rm M}_2(\mathbb{Z}_3)$ & $\times$     & $\times$ & $\times$ & $\checkmark$ & $\times$     & $\checkmark$
    \end{tabular}
\end{table}

Actually, our other basic motivating tool is somewhat to refine the results established in \cite{Kos}, where those rings $R$ are considered for which there is a fixed natural $n>1$ such that $u^n-1\in J(R)$ for each $u\in U(R)$. Note the well-known fact that $1+J(R)\subseteq U(R)$ always. Besides, the partial case when $U(R)=1+J(R)$ was comprehensively studied in \cite{D} paralleling to \cite{danchev2016rings}, by calling these rings as {\it rings with Jacobson units} or just {\it JU rings}. It is worthwhile noticing that the terminology in \cite{Kos} is slightly different, namely they called the aforementioned rings as {\it $n$-JU rings}.

\medskip

Thus, our further work is organized as follows: In the next section, we state and prove our achievements which are selected in three subsections. In the first one, we study a few characterizing properties of $n$-UU rings some of which will be applied in the subsequent two subsections (see Theorems~\ref{theorem1} and \ref{theorem2}). In the second one, we explore when matrix rings and some their generalizations are $n$-UU (see Theorem~\ref{thm 1.11}). In the third one, we investigate when group rings are $n$-UU and find suitable conditions under which this achieve (see Theorem~\ref{thm 1 group ring}). We finish off our considerations with a difficult query (see Problem~\ref{important}).

\section{Main Results and Examples}

Our chief results will be distributed on the next three subsections in the following manner:

\subsection{Characterization properties}

We start with the following useful technicality.

\begin{proposition}\label{UU}
Suppose $R$ is a ring of prime characteristic $p$ such that\\ $m-1 | p-1$. Then, $R$ is an $(m-1)$-UU ring if, and only if, every invertible element of the ring R is $m$-strongly nil clean.
\end{proposition}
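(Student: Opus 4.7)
My plan is to attack both implications by passing from $R$ to the commutative subring $S := \mathbb{F}_p[u]$ generated by a given unit $u$, and to exploit the splitting of $x^{m-1}-1$ into distinct linear factors over $\mathbb{F}_p$ (which is available because $m-1 \mid p-1$) together with a Chinese Remainder decomposition of $S$.

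For the forward direction, suppose $R$ is $(m-1)$-UU and fix $u \in U(R)$. Then $u^{m-1}-1$ is nilpotent, so $(u^{m-1}-1)^N = 0$ for some $N \geq 1$. Since $\operatorname{char} R = p$, the subring $S \subseteq R$ is a commutative quotient of $\mathbb{F}_p[x]$, and the minimal polynomial of $u$ divides $(x^{m-1}-1)^N$. Because $m-1 \mid p-1$, the polynomial $x^{m-1}-1$ splits as $\prod_{i=1}^{m-1}(x-\zeta_i)$ over $\mathbb{F}_p$ with the $\zeta_i$ distinct $(m-1)$-th roots of unity, so $(x^{m-1}-1)^N = \prod_i (x-\zeta_i)^N$ is a product of pairwise coprime polynomials. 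The Chinese Remainder Theorem then gives
\[
S \;\cong\; \prod_{i \in I} \mathbb{F}_p[x]\big/(x-\zeta_i)^{N_i}
\]
for some nonempty $I \subseteq \{1,\ldots,m-1\}$ and exponents $N_i \leq N$. In each local factor the image of $u$ has the shape $\zeta_i + \nu_i$ with $\nu_i$ nilpotent. Let $e \in S$ correspond to the tuple $(\zeta_i)_{i \in I}$ and set $n := u-e$. Then $n$ is nilpotent, $e,n$ commute (being in the commutative ring $S$), and since each $\zeta_i^{m-1} = 1$ we obtain $e^m = e$. Hence $u = e+n$ is $m$-strongly nil-clean.

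Conversely, assume every unit of $R$ is $m$-strongly nil-clean and fix $u \in U(R)$. Write $u = e+n$ with $e^m = e$, $n \in {\rm Nil}(R)$, and $en = ne$. Because $n$ commutes with $u$, it also commutes with $u^{-1}$, so $nu^{-1}$ is nilpotent and $eu^{-1} = (u-n)u^{-1} = 1 - nu^{-1}$ is a unit, forcing $e \in U(R)$. Combining this with $e^m = e$ yields $e^{m-1}=1$. The binomial theorem (valid since $e$ and $n$ commute) then gives
\[
u^{m-1} - 1 \;=\; (e+n)^{m-1} - e^{m-1} \;=\; \sum_{k=1}^{m-1} \binom{m-1}{k}\, e^{m-1-k}\, n^k,
\]
and each summand lies in the commutative subring generated by $e$ and $n$ and is divisible by $n$, whence the whole sum is nilpotent. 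Thus $u^{m-1} \in 1 + {\rm Nil}(R)$ and $R$ is $(m-1)$-UU.

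The main obstacle is the forward direction: the CRT step depends on the factors $(x-\zeta_i)^N$ being pairwise coprime, which in turn requires $x^{m-1}-1$ to be separable over $\mathbb{F}_p$, i.e., to split into distinct linear factors. This is precisely where the hypothesis $m-1 \mid p-1$ is used. Without it, the annihilating polynomial of $u$ could carry genuinely repeated irreducible factors and there would be no clean way to extract the ``semisimple'' $m$-potent part $e$ from $u$ inside $S$; the backward direction, by contrast, is essentially formal and does not need the characteristic assumption.
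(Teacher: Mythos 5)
Your proof is correct, but it follows a genuinely different route from the paper's. The paper stays entirely inside $R$ and runs a Frobenius-power argument: from $b = 1 - a^{m-1} \in {\rm Nil}(R)$ and, using $m-1 \mid p-1$ (so that $a^{p-1}$ is a power of $a^{m-1}$), also $b' = 1 - a^{p-1} \in {\rm Nil}(R)$, it raises everything to a sufficiently high power $p^n$ of the characteristic; the identity $(x+y)^{p^n} = x^{p^n} + y^{p^n}$ for commuting elements then shows that $e := a^{p^n}$ satisfies $e^m = e = e^p$, while $(a-e)^{p^n} = a^{p^n} - e^{p^n} = e - e = 0$, so $a = e + (a-e)$ is the required decomposition, with the $m$-potent part an explicit power of $a$. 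You instead pass to the finite-dimensional commutative algebra $S = \mathbb{F}_p[u]$, use $m-1 \mid p-1$ to split $x^{m-1}-1$ into distinct linear factors, and read the decomposition off the Chinese Remainder Theorem---in effect the Jordan--Chevalley (Fitting) decomposition of an Artinian commutative algebra. Both arguments are sound: the paper's is shorter, more elementary, and produces $e$ explicitly as $a^{p^n}$, whereas yours makes transparent exactly what the divisibility hypothesis buys (all $(m-1)$-th roots of unity lie in $\mathbb{F}_p$, so no Hensel lifting is needed) and, unlike the paper, you also write out the converse implication---which the paper's proof omits altogether---correctly observing that this half is purely formal and uses neither the characteristic nor the divisibility hypothesis. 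One small correction to your closing remark: $m-1 \mid p-1$ gives strictly more than separability of $x^{m-1}-1$ (separability alone needs only $p \nmid m-1$); its real role in your argument is rationality of the roots over $\mathbb{F}_p$, i.e., that the $\zeta_i$ may be taken to be constants.
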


\begin{proof}
Let $a\in U(R)$. Since $R$ is an $(m-1)$-UU ring, we have $b=1-a^{m-1} \in {\rm Nil}(R)$; and since $m-1 | p-1$, we have $b'=1-a^{p-1} \in {\rm Nil}(R)$. Therefore, there exist positive integers $k,k'$ such that $$(ab)^k=a^kb^k=b^ka^k=0=b'^{k'}a^k=a^kb'^{k'}=(ab')^{k'}.$$ Also, we can choose a number $n$ large enough so that $p^n>k,k'$. Since the characteristic of $R$ is the prime number $p$, it follows that
\begin{align*}
    1 &=(a^{m-1}+b)^{p^n}=(a^{m-1})^{p^n}+b^{p^n}=(a^{p^n})^{m-1}+b^{p^n}, \\
    1 &=(a^{p-1}+b')^{p^n}=(a^{p-1})^{p^n}+b'^{p^n}=(a^{p^n})^{p-1}+b'^{p^n}.
\end{align*}

Furthermore, suppose $e=a^{p^n}$, $f=b^{p^n}$ and $f'=b'^{p^n}$. Then, $e^{m-1}+f=1=e^{p-1}+f'$ and also $ef=(ab)^{p^n}=0=(ab')^{p^n}=ef'$. Therefore,
\begin{align*}
    e &=e(e^{m-1}+f)=e^p+ef=e^m, \\
    e &=e(e^{p-1}+f')=e^p+ef'=e^p.
\end{align*}

Likewise, on the other side, we have $$(a-e)^{p^n}=a^{p^n}-e^{p^n}=e-e^{p^n}=e-e=0,$$ so $a-e \in {\rm Nil}(R)$. Therefore, $a=e+(a-e)$, as required.
\end{proof}

As a valuable consequence, we extract the following.

\begin{corollary}
Suppose $R$ is a ring of prime characteristic $p$. Then, $R$ is a $\pi$-UU ring if, and only if, every invertible element of $R$ is periodic.
\end{corollary}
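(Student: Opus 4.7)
The plan is to give a direct proof of both implications, using the characteristic $p$ hypothesis only for the nontrivial direction. The backward direction is essentially formal: if every unit is periodic, then $u^i = u^j$ for some $i > j \geq 1$; multiplying by $u^{-j}$ (legal since $u \in U(R)$) yields $u^{i-j} = 1$, hence $u^{i-j} - 1 = 0 \in \mathrm{Nil}(R)$, so $u^{i-j} \in 1 + \mathrm{Nil}(R)$. This shows $R$ is $\pi$-UU; notice that no assumption on the characteristic is needed here.

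For the forward direction, let $u \in U(R)$. Since $R$ is $\pi$-UU, pick $i \geq 1$ with $u^i - 1 = \nu \in \mathrm{Nil}(R)$, say $\nu^k = 0$. The key tool is the Frobenius identity in characteristic $p$: $(x - y)^{p^N} = x^{p^N} - y^{p^N}$ for all $x,y \in R$ (which holds in both the $p=2$ and odd-$p$ cases, since $-1 = 1$ when $p=2$). Choosing $N$ with $p^N \geq k$, we then get
\[
0 = \nu^{p^N} = (u^i - 1)^{p^N} = u^{ip^N} - 1,
\]
hence $u^{ip^N} = 1$, from which $u^{ip^N + 1} = u$, proving that $u$ is periodic.

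There is no real obstacle; the only subtle point is justifying the Frobenius expansion cleanly when $p = 2$, which is handled uniformly by the remark above. Note also that the proof does not invoke Proposition~\ref{UU} directly, since the corollary does not impose the divisibility constraint $m-1 \mid p-1$; instead it uses only the raw Frobenius identity and the definition of $\pi$-UU, so it should be presented as a self-contained argument rather than as a formal corollary of the preceding proposition.
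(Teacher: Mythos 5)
Your proof is correct and takes essentially the same route as the paper's: both extract a nilpotent $\nu = u^i - 1$ from the $\pi$-UU hypothesis and then apply the characteristic-$p$ Frobenius (freshman's dream) identity with an exponent $p^N$ exceeding the nilpotency index to conclude $u^{ip^N} = 1$. The only cosmetic differences are that you also write out the trivial backward implication, which the paper leaves implicit, and you phrase the key step as $(u^i-1)^{p^N} = u^{ip^N}-1$ (requiring your sign remark for $p=2$) rather than the paper's sign-free form $(1+\nu)^{p^N} = 1+\nu^{p^N}$.
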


\begin{proof}
For every $a \in U(R)$, there exists a positive integer $k$ such that $b = a^k-1 \in {\rm Nil}(R)$. Let us assume that $b^s = 0$. We may take an integer $n$ large enough such that $p^n \ge s$. Thus, we obtain that
$$a^{kp^n} = (1 + b)^{p^n} = 1 + b^{p^n} = 1,$$ as needed.
\end{proof}

It is well known that if $K$ is a field and $K^{\ast}$ is the multiplicative group of $K$, then any finite subgroup of $K^{\ast}$ is cyclic (see, e.g., \cite[Proposition 3.7]{grove2012algebra}).

Now, the following technical claims and constructions give some further light on that matter.

\begin{lemma}\label{field}
Suppose $F$ is a field. If $F$ is $n$-UU, then $F$ is finite and $(|F|-1) | n$.
\end{lemma}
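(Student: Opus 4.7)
My plan is to exploit the fact that a field has no nonzero nilpotent elements, so the $n$-UU condition collapses to a purely multiplicative statement. Explicitly, since $\mathrm{Nil}(F) = \{0\}$, the defining condition $u^n \in 1 + \mathrm{Nil}(F)$ forces $u^n = 1$ for every $u \in U(F) = F^{\ast}$. Hence every nonzero element of $F$ is a root of the polynomial $x^n - 1 \in F[x]$.

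Next, I would invoke the standard fact that a polynomial of degree $n$ over a field has at most $n$ roots. Applied to $x^n - 1$, this gives $|F^{\ast}| \leq n$, and in particular $F$ is finite.

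Finally, to obtain the divisibility conclusion, I would apply the cited result that any finite subgroup of the multiplicative group of a field is cyclic (\cite[Proposition 3.7]{grove2012algebra}); in our case $F^{\ast}$ itself is cyclic of order $|F| - 1$. Let $g$ be a generator. The condition $u^n = 1$ applied to $u = g$ forces the order of $g$, namely $|F|-1$, to divide $n$, i.e.\ $(|F|-1) \mid n$, as desired.

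I do not anticipate any real obstacle: the argument is entirely routine once the nilradical is eliminated. The only subtlety worth emphasizing in the write-up is that the nilpotent summand in the defining equation vanishes in a field, which is precisely what converts the $n$-UU hypothesis into the clean equation $u^n = 1$ and thereby makes the standard cyclicity argument applicable.
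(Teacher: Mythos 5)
Your proposal is correct and is essentially the paper's own argument: both observe that $\mathrm{Nil}(F)=\{0\}$ turns the $n$-UU condition into $u^n=1$ for all $u\in F^{\ast}$, bound $|F^{\ast}|$ by the number of roots of $x^n-1$ to get finiteness, and then use cyclicity of $F^{\ast}$ (via a generator) to conclude $(|F|-1)\mid n$. No differences worth noting beyond your write-up being slightly more explicit about why the nilpotent summand vanishes.
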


\begin{proof}
Let $f(x)=1-x^n \in F[x]$. Since $F$ is a field, the polynomial $f(x)$ has at most $n$ roots in $F^{\ast}$. So, if we suppose $A$ to be the set of all roots of $f$ in $F^{\ast}$, we will have $F^{\ast}=A$. Therefore, $|F^{\ast}|=|A|<n$.

On the other hand, as $F^{\ast}$ is a cyclic group, there exists $a\in F^{\ast} $such that $F^{\ast} =\langle a\rangle$. Since $a^n=1$, we get $o(a)|n$, and hence $n=o(a)q=|F^{\ast}|q$. Consequently, $|F^{\ast}||n$. Finally, $(|F|-1) | n$, as stated.
\end{proof}

\begin{lemma}\label{division ring}
Let $D$ be a division ring and $n\ge 2$. If $D$ is $n$-UU, then $D$ is a finite field and $(|D|-1) | n$.
\end{lemma}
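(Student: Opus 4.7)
The plan is to reduce the problem to the field case by showing $D$ must be commutative, then invoke Lemma~\ref{field}.

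\textbf{Step 1.} In a division ring the only nilpotent element is $0$, so the $n$-UU hypothesis says that for every $u\in U(D)=D\setminus\{0\}$ we have $u^n-1=0$, i.e.\ $u^n=1$. Allowing $x=0$ trivially, this gives the identity $x^{n+1}=x$ for all $x\in D$ with the \emph{fixed} exponent $n+1\ge 3$.

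\textbf{Step 2.} Apply Jacobson's classical commutativity theorem: any ring in which every element satisfies $x^{k(x)}=x$ for some integer $k(x)>1$ is commutative; a fortiori this applies with the uniform exponent $n+1$. Hence $D$ is commutative, and being a division ring it is a field.

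\textbf{Step 3.} Since $D$ is now a field that is $n$-UU, Lemma~\ref{field} immediately yields that $D$ is finite and $(|D|-1)\mid n$, finishing the proof.

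The only non-elementary ingredient is Jacobson's commutativity theorem in Step~2, which I regard as the main obstacle; one could avoid it by arguing directly that the periodic multiplicative group $D^{\ast}$, all of whose finite subgroups are cyclic by Herstein's theorem, must itself be finite (using that $u^n=1$ for all $u\in D^{\ast}$), and then invoking Wedderburn's little theorem on finite division rings to get commutativity. Either route funnels into Lemma~\ref{field} for the final divisibility conclusion.
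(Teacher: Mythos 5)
Your proof is correct and follows essentially the same route as the paper's: observe that ${\rm Nil}(D)=\{0\}$ forces $a^{n+1}=a$ for all $a\in D$, invoke Jacobson's commutativity theorem to conclude $D$ is a field, and then apply Lemma~\ref{field} for finiteness and the divisibility $(|D|-1)\mid n$. Your remark on an alternative path via Herstein and Wedderburn is a nice aside, but the core argument matches the paper's proof exactly.
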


\begin{proof}
Certainly, ${\rm Nil}(D)={0}$. So, for any $a \in D$, we have $a^n=1$, whence $a=a^{n+1}$. Furthermore, appealing to the famous Jacobson's theorem (cf. \cite{lam1991first}), we detect that $D$ must be commutative, and thus a field, as expected.

The second part follows at once from Lemma~\ref{field}.
\end{proof}

Our two next examples somewhat clarify the situation a bit.

\begin{example} \label{n mod 3}
Suppose $R = {\rm M}_2(\mathbb{Z}_2)$ and $n \equiv 0$ (mod 3). Then, we claim that $R$ is an $n$-UU ring.

In fact, since $U^6(R)={\rm I}_2$, it must be that $A^{6k}={\rm I}_2$ for every $A \in U(R)$ and $k\geq 1$. Thus, $({\rm I}_2-A^{3k})^2=0$. As $n=3k$, we infer that $R$ is an $n$-UU ring, as claimed.
\end{example}

\begin{example}
Suppose $R = {\rm M}_2(\mathbb{Z}_3)$ and $n \equiv 0$ (mod 8). Then, we claim that $R$ is an $n$-UU ring.

In fact, since $U^{24}(R)={\rm I}_2$, it must be that $A^{24k}={\rm I}_2$ for every $A \in U(R)$ and $k\geq 1$. Thus, $({\rm I}_2-A^{8k})^3=0$. As $n=8k$, we conclude that $R$ is an $n$-UU ring, as claimed.
\end{example}

The next assertion is well-known and is formulated here only for the sake of completeness and the reader's convenience.

\begin{lemma}\cite[Theorem 2(1)]{ohori1985strongly}\label{lemma 1}
A ring $R$ is strongly $\pi$-regular if, and only if, for any element $a \in R$, there exist $e = e^2 \in R$, $u \in U(R)$ and $w \in {\rm Nil}(R)$ such that $a = eu + w$ and $e$, $u$, $w$ all commutate each other.
\end{lemma}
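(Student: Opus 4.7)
The plan is to handle both implications via the Fitting-type decomposition of a strongly $\pi$-regular element, namely: for each $a \in R$ there is an idempotent $e$ commuting with $a$, such that $ae$ is invertible in the corner ring $eRe$ (with inverse also commuting with $a$) and $a(1-e)$ is nilpotent. I would take this decomposition as the bridge between the two formulations, and then reduce both directions to bookkeeping in the commutative subring generated by three pairwise commuting elements.

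For the $(\Rightarrow)$ direction, given $a \in R$ I would invoke the Fitting decomposition above to obtain $e$, let $v \in eRe$ be the inverse of $ae$ in the corner, and set $u := ae + (1-e)$ together with $w := a(1-e)$. A short verification shows that $u$ is a unit of $R$ with inverse $v + (1-e)$; that $eu = eae + e(1-e) = ae$ (using $ue = eu$), so $a = eu + w$; and that pairwise commutation of $e$, $u$, $w$ follows routinely from $ea = ae$ and $v \in eRe$. Finally, $w \in \mathrm{Nil}(R)$ is built into the choice of $e$.

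For the $(\Leftarrow)$ direction, starting from $a = eu + w$ with pairwise commuting $e,u,w$, I would first observe that $e$ commutes with $a$, so $a(1-e) = eu(1-e) + w(1-e) = w(1-e)$; since $w$ and $1-e$ commute, its $k$-th power is $w^k(1-e)$, which vanishes once $w^k = 0$. Hence $a^k = a^ke$ for this $k$. Next I would note that $u+w = u(1 + u^{-1}w)$ is a unit (as $u^{-1}w$ is nilpotent and commutes with itself), that $u+w$ commutes with $e$, and that $ae = e(u+w)$. Setting $x := e(u+w)^{-1}$ and computing in the corner $eRe$, one gets $ax = xa = e$; combining, $a^k = a^k e = a^k(ax) = a^{k+1}x$ with $x$ commuting with $a$, which is exactly the strongly $\pi$-regular condition.

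I expect the only genuinely non-routine ingredient to be the existence of the Fitting idempotent in the $(\Rightarrow)$ direction, which rests on the standard observation that the chains $a^nR$ and $Ra^n$ stabilize for a strongly $\pi$-regular element and jointly produce the required idempotent. Once that is in hand, every remaining step is a mechanical verification of commutators and the fact that a sum of a unit and a commuting nilpotent is again a unit.
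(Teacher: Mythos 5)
Your proposal is correct, but there is nothing in the paper to compare it against: the paper deliberately gives no proof of this lemma, quoting it verbatim from \cite[Theorem 2(1)]{ohori1985strongly} and labelling it as well known. Your argument therefore supplies an actual proof where the paper only supplies a citation. The backward direction as you wrote it is complete and elementary: from the pairwise commutations one gets $ea=ae$, hence $a^k(1-e)=w^k(1-e)=0$ once $w^k=0$; the element $u+w=u(1+u^{-1}w)$ is a unit since $u^{-1}$ commutes with the nilpotent $w$; and $x:=e(u+w)^{-1}$ commutes with $a$ and satisfies $ax=xa=e$, giving $a^k=a^{k+1}x$, which is strong $\pi$-regularity. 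The forward direction is also correct --- all the stated verifications check out: $u=ae+(1-e)$ is a unit with inverse $v+(1-e)$, $eu=ue=ae$, $ew=we=0$, and in fact $uw=wu=w$ --- and its only nonroutine input is the one you flag, namely the Fitting-type decomposition of a strongly $\pi$-regular element (an idempotent $e$ commuting with $a$ such that $ae$ is invertible in $eRe$ and $a(1-e)$ is nilpotent). That decomposition is precisely \cite[Proposition 1]{nicholson1999strongly}, a reference the paper itself invokes in the proof of Theorem~\ref{theorem1}, or equivalently Azumaya's classical construction $e=a^nx^n$ from $a^n=a^{n+1}x$ with $ax=xa$. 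So your proof is legitimate and self-contained modulo a standard fact already in the paper's toolkit; what it buys, compared with the paper's bare citation of \^{O}hori, is an explicit and checkable derivation, at the cost of leaving that one classical ingredient (the existence of the commuting pseudo-inverse and the Fitting idempotent) unproved --- which is the same level of deferral the paper itself chose for the entire lemma.
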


We are now prepared to establish one of the main results.

\begin{theorem}\label{theorem1}
Let $R$ be a ring and $n \ge  2$ be a natural number. The following statements are equivalent:

(1) $R$ is strongly $n$-nil-clean.

(2) For each $a \in R$, $a = ev + b$, where $e^2 = e \in R$, $v^n = v \in U(R)$, and $b \in {\rm Nil}(R)$ with $ab = ba$ and $ev=ve$.

(3) For each $a \in R$, $a = ev + b$, where $e^2 = e \in R$, $v^n = v \in U(R)$, and $b \in {\rm Nil}(R)$ with $ab = ba$ and $ve=eve$.

(4) $a - a^n$ is nilpotent for every $a \in R$.

(5) For each $a \in R$, $a^{n-1}$ is a strongly nil-clean element in $R$.

(6) $R$ is strongly $\pi$-regular and an $(n-1)$-UU ring.
\end{theorem}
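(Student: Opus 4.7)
The plan is to establish the six equivalences via the two cycles $(1) \Rightarrow (2) \Rightarrow (3) \Rightarrow (1)$ and $(1) \Rightarrow (4) \Leftrightarrow (5) \Leftrightarrow (6) \Rightarrow (1)$. The recurring device will be the \emph{idempotent--unit factorization} of an $n$-potent element: given $q \in R$ with $q^n = q$, the element $e := q^{n-1}$ is idempotent (since $q^{2(n-1)} = q^{n-1}$) and $v := q + (1-e)$ is a unit with $v^{n-1} = 1$ and $ev = q$; both $e$ and $v$ lie in $\mathbb{Z}[q]$.

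For $(1) \Rightarrow (2)$, I will apply this factorization to a strongly $n$-nil-clean decomposition $a = q + b$; the required commutativities are inherited from $qb = bq$ together with $e, v \in \mathbb{Z}[q]$. Step $(2) \Rightarrow (3)$ is immediate. For $(3) \Rightarrow (1)$, I will note that $ve = eve$ is equivalent to $(1-e)ve = 0$, making $v$ block-upper-triangular in the Peirce decomposition with respect to $e$. From $v^n = v \in U(R)$ one obtains $v^{n-1} = 1$, which forces $(eve)^{n-1} = e$ in the corner $eRe$. A direct block-matrix computation then gives $(ev)^n = ev$, so $a = (ev) + b$ is the desired strongly $n$-nil-clean form, with $(ev) b = b (ev)$ following from $ab = ba$.

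For the algebraic-identity cluster $(1) \Leftrightarrow (4) \Leftrightarrow (5) \Leftrightarrow (6)$: both $(1) \Rightarrow (4)$ and $(1) \Rightarrow (5)$ follow by expanding $a^n = (q+b)^n$ and $a^{n-1} = (q+b)^{n-1}$ via $qb = bq$, since every $b$-term is a nilpotent commuting with $q$, $q^n = q$, and $q^{n-1}$ is idempotent. For $(5) \Rightarrow (4)$: if $a^{n-1} = f + w$ with $f \in \mathbb{Z}[a^{n-1}]$ idempotent, then $a - a^n = a(1-f) - aw$ is a sum of two commuting nilpotents (the first via $(a(1-f))^{n-1} = w(1-f) \in \text{Nil}(R)$). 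For $(4) \Rightarrow (5)$: setting $y = a^{n-1}$, the computation $(y(1-y))^m = a^{(n-2)m}(a - a^n)^m = 0$ for $m$ large yields $y - y^2 \in \text{Nil}(R)$, and standard lifting of the idempotent $\bar y$ modulo the nilradical of $\mathbb{Z}[a]$ produces the strongly nil-clean form. For $(4) \Leftrightarrow (6)$: strong $\pi$-regularity follows from $a^m \in a^{m+n-1} R \subseteq a^{m+1} R$ (using $(a-a^n)^m = 0$), and the $(n-1)$-UU property follows from cancelling the unit $u$ in $u(1-u^{n-1}) \in \text{Nil}(R)$; conversely, Lemma~\ref{lemma 1} yields $a = eu + w$, so $a - a^n = -eu(u^{n-1}-1) + (\text{nilpotent})$ is nilpotent by the $(n-1)$-UU hypothesis.

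Finally, for $(6) \Rightarrow (1)$: the Jacobson radical $J = J(R)$ is nil by strong $\pi$-regularity, and in $R/J$ Lemma~\ref{lemma 1} gives $\bar a = \bar e \bar u$ (with vanishing nilpotent part, since $R/J$ is reduced), while the $(n-1)$-UU condition forces $\bar u^{n-1} = \bar 1$; hence every $\bar a \in R/J$ satisfies $\bar a^n = \bar a$. The task then reduces to lifting this $n$-potent back to $q \in \mathbb{Z}[a] \subseteq R$ with $q^n = q$ and $a - q \in J$; $q$ will then commute with $b := a - q$, yielding the desired decomposition. This lifting of $n$-potents modulo a nil ideal is the main obstacle---generalizing the classical idempotent-lifting lemma, it is achieved via the Bezout coprimality $x \cdot x^{n-2} - (x^{n-1}-1) = 1$ in $\mathbb{Z}[x]$ together with CRT and Hensel-type iteration in the commutative subring $\mathbb{Z}[a]$.
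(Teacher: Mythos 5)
Most of your proposal is sound, and in places it is more self-contained than the paper's own proof: the cycle (1) $\Rightarrow$ (2) $\Rightarrow$ (3) $\Rightarrow$ (1) via $e = q^{n-1}$, $v = q + (1-e)$ is exactly the paper's device, your Peirce-decomposition verification of $(ev)^n = ev$ is correct, and your direct arguments for (1) $\Rightarrow$ (4), (4) $\Leftrightarrow$ (5) and (4) $\Leftrightarrow$ (6) all check out (for (5) $\Rightarrow$ (4) you tacitly use that the idempotent part of a strongly nil-clean element lies in $\mathbb{Z}[a^{n-1}]$; that is a known fact, which the paper handles by citing \cite[Proposition 2.4]{kocsan2016nil}).

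The genuine gap is your final bridge (6) $\Rightarrow$ (1), and it is fatal to the architecture of the proof, since it is the \emph{only} implication you offer from the cluster $\{(4),(5),(6)\}$ back to $\{(1),(2),(3)\}$. You assert that under (6) the quotient $R/J(R)$ is reduced, and conclude that every element of $R/J(R)$ is an $n$-potent. Both claims are false: take $R = {\rm M}_2(\mathbb{Z}_2)$ and $n = 4$. This ring is strongly $\pi$-regular (being finite) and $3$-UU (Example~\ref{n mod 3}; the units form ${\rm GL}_2(\mathbb{Z}_2)\cong S_3$ and $u^3 - 1$ is nilpotent for each of them), so it satisfies (6); yet $J(R) = 0$ and $R$ contains nonzero nilpotents, so $R/J(R)$ is not reduced, and the matrix unit $E_{12}$ certainly does not satisfy $x^4 = x$. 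Thus the strongly $n$-nil-clean decomposition can never be obtained by lifting $n$-potents modulo $J(R)$: the nilpotent parts of the decompositions do not vanish modulo $J(R)$. Note that this is exactly the deep direction of the theorem; the paper does not prove it from scratch either, but imports it as the equivalence (1) $\Leftrightarrow$ (4) from \cite[Theorem 8]{abyzov2022rings}. If you want a self-contained argument, the right place to run your Bezout/CRT/Hensel machinery is not $R/J(R)$ but the commutative subring $\mathbb{Z}[a]$: under (4), every $c \in \mathbb{Z}[a]$ has $c - c^n$ nilpotent, so $\mathbb{Z}[a]/{\rm Nil}(\mathbb{Z}[a])$ satisfies $x^n = x$; this forces $\mathbb{Z}[a]$ to have finite characteristic whose prime divisors $p$ satisfy $(p-1) \mid (n-1)$, and after a CRT splitting one can lift first the idempotent $a^{n-1}$ and then an $(n-1)$-st root of unity inside $\mathbb{Z}[a]$, producing $q \in \mathbb{Z}[a]$ with $q^n = q$ and $a - q$ nilpotent. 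That repair is genuine work, but it is feasible; as written, however, your step (6) $\Rightarrow$ (1) does not stand, and with it the proof of the theorem is incomplete.
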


\begin{proof}

(1) $\Leftrightarrow$  (4). It follows directly from \cite[Theorem 8]{abyzov2022rings}.\\

(1) $\Rightarrow$  (2). Assume that (1) holds. Then, we can write every element $a\in R$ in the form $a=f+b$, where $f^n=f$, $b\in {\rm Nil}(R)$ and $ab=ba$. So, assuming $e=f^{n-1}$, we deduce $v=(1-f^{n-1})+f$, as required.\\

(2)  $\Rightarrow$  (3). It is clear.\\

(3)  $\Rightarrow$  (4). Clearly, we have $(ev)^n=v^{n-1}ev=ev$. Thus, we detect $a^n=ev+b'$, where $b'\in {\rm Nil}(R)$. Since $bb'=b'b$, it must be that $a-a^n\in {\rm Nil}(R)$, as required.\\

(1) $\Rightarrow$ (5). Suppose $a \in R$ is arbitrary. Then, we have that $a = e + q$, where $e^n = e, q \in {\rm Nil}(R)$ and $aq = qa$. With a simple calculation at hand, we obtain $a^{n-1} = e^{n-1} + p$, where $p \in {\rm Nil}(R)$ and $pa = ap$. Clearly, $(e^{n-1})^2 = e^{n-1}$, as required.\\

(5) $\Rightarrow$ (4). Suppose $a \in R$ is arbitrary. Then, we know that $a^{n-1}$ is strongly nil-clean in $R$. Thus, we can write $a^{n-1} = e + q$, where $e^2 = e, q \in {\rm Nil}(R)$ and $eq = qe$. So, it follows that $$a - a^n = a(1-e) - aq,$$ and hence $ea = ae$ utilizing \cite[Proposition 2.4]{kocsan2016nil}, so that $qa = aq$. Therefore, $(a - a^n)^{n-1} \in {\rm Nil(R)}$. Finally, $a - a^n \in {\rm Nil}(R)$, as needed.\\

(5) $\Rightarrow$ (6). Suppose $a \in R$ is arbitrary. Then, we see that $a^{n-1}$ is strongly nil-clean in $R$. Thus, we write $a^{n-1} = e + q$, where $e^2 = e$, $q \in {\rm Nil}(R)$ and $eq = qe$. Assuming that $q^k = 0$ for some $k\in \mathbb{N}$, we then have $a^{(n-1)k} = eu$, where $u \in U(R)$. So, using \cite[Proposition 1]{nicholson1999strongly}, $R$ is a strongly $\pi$-regular ring. Now, assume that $u \in U(R)$ is arbitrary. Write $u^{n-1} = e + q$, whence $$e = u^{n-1} - q \in {\rm Id}(R)\cap U(R)$$ forcing $e = 1$. Consequently, $R$ is an $(n-1)$-UU ring, as expected.\\

(6) $\Rightarrow$ (5). Suppose $a \in R$ is arbitrary. Since $R$ is strongly $\pi$-regular, Lemma \ref{lemma 1} gives that $$a = eu + q,$$ where $e^2 = e$, $u \in U(R)$, $q \in {\rm Nil}(R)$ and $e$, $u$, $q$ commutate. Clearly, one calculates that $$a^{n-1} = eu^{n-1} + p,$$ where $p \in {\rm Nil}(R)$. By assumption, let us write $u^{n-1} = 1 + b$ for some $b \in {\rm Nil}(R)$. Therefore, $$a^{n-1} = e(1+b) + p = e + eb + p.$$ Since $eu=ue$, we have $eb=be$, so $eb \in {\rm Nil}(R)$. Also, as $e$, $u$, $q$ commute, we can get $eb+p \in {\rm Nil}(R)$. Finally, $a^{n-1}$ is a strongly nil-clean element, as pursued.
\end{proof}

It is worthwhile noticing that it was shown in \cite{cui2020some} that any strongly $\pi$-regular $\pi$-UU ring is periodic, and vice versa. Thereby, the equivalence (1) $\iff$ (6) could be treated as an extension of the mentioned fact. Besides, in the case when $n=2$, we directly derive from the preceding theorem the following well-known fact.

\begin{corollary}\cite[Corollary 3.11]{diesl2013nil}
Let $R$ be a ring. Then, $R$ is strongly nil-clean if, and only if, $R$ is a strongly $\pi$-regular UU ring.
\end{corollary}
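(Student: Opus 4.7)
The plan is to obtain this as an immediate specialization of Theorem~\ref{theorem1} to the case $n=2$. The two pieces of terminology that need to be lined up are the notion of strongly $n$-nil-clean when $n=2$ and the notion of $(n-1)$-UU when $n=2$; once these identifications are made, the equivalence (1)~$\Leftrightarrow$~(6) of the theorem is exactly the statement of the corollary.

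First I would note that a $2$-potent element $f\in R$ is by definition an element satisfying $f^2=f$, that is, an idempotent. Consequently, saying that $R$ is strongly $2$-nil-clean in the sense of item~(4) of the opening Definition is precisely saying that every element of $R$ is a sum of an idempotent and a nilpotent which commute with each other, which is the standard notion of a strongly nil-clean ring.

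Next, I would unfold the $(n-1)$-UU condition at $n=2$: this is the $1$-UU condition, which asks that $u^1\in 1+\mathrm{Nil}(R)$ for every unit $u\in R$. That is exactly the defining property of a UU ring given in item~(1) of the Definition. So the hypothesis ``$R$ is strongly $\pi$-regular and an $(n-1)$-UU ring'' collapses, for $n=2$, to ``$R$ is a strongly $\pi$-regular UU ring.''

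With both identifications in hand, the corollary is immediate: apply Theorem~\ref{theorem1} with $n=2$ and use the equivalence of statements~(1) and~(6) there. There is no genuine obstacle here; the only point that requires a moment of care is to check that the translation between the vocabularies (``strongly $2$-nil-clean'' vs.\ ``strongly nil-clean'', ``$1$-UU'' vs.\ ``UU'') is literal, which it is directly from the definitions quoted at the beginning of the paper.
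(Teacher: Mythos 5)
Your proposal is correct and matches the paper's own derivation exactly: the paper likewise obtains this corollary by specializing Theorem~\ref{theorem1} to $n=2$ and reading the equivalence (1)~$\Leftrightarrow$~(6), with the same observations that ``strongly $2$-nil-clean'' is ``strongly nil-clean'' and ``$1$-UU'' is ``UU''. Nothing is missing.
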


We will denote the least common multiple of integers $m_1, \ldots , m_k$ by the notation $[m_1, \ldots , m_k]$.

\medskip

As a consequence to the last theorem, we yield the following statement in which point (i) is new and outside of the original.

\begin{corollary}\cite[Theorem 2]{abyzov2022rings} \label{matrix n-UU}
Let $F$ be a finite field, $m, n \in \mathbb{N}$, and $n > 1$. The following statements are equivalent:
\begin{enumerate}
    \item ${\rm M}_m(F)$ is an $(n-1)$-UU ring.
    \item ${\rm M}_m(F)$ is an strongly n-nil-clean.
    \item  $N = [|F| -1, |F|^2 -1, ..., |F|^m -1]$ is a divisor of $n-1$.
\end{enumerate}
\end{corollary}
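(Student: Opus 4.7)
The plan is to deduce the new clause (1) $\iff$ (2) from the general Theorem~\ref{theorem1}, while accepting the equivalence (2) $\iff$ (3) from the cited \cite[Theorem 2]{abyzov2022rings}. Because $F$ is finite, $\mathrm{M}_m(F)$ is a finite, hence Artinian, ring; in particular it is strongly $\pi$-regular (the descending chain $a^k R$ of right ideals stabilizes for every $a$). Applying the equivalence (1) $\iff$ (6) of Theorem~\ref{theorem1}, I conclude that $\mathrm{M}_m(F)$ is strongly $n$-nil-clean if and only if it is $(n-1)$-UU, which is exactly the new implication (1) $\iff$ (2); combined with the Abyzov equivalence this chains through to (3).

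For an alternative, self-contained route to (1) $\iff$ (3) I would argue via eigenvalues. A matrix $u \in U(\mathrm{M}_m(F)) = \mathrm{GL}_m(F)$ satisfies $u^{n-1} - \mathrm{I}_m \in \mathrm{Nil}(\mathrm{M}_m(F))$ if and only if every eigenvalue $\lambda \in \overline{F}$ of $u$ satisfies $\lambda^{n-1} = 1$. The set of possible eigenvalues of invertible matrices in $\mathrm{M}_m(F)$ is exactly $\bigcup_{k=1}^{m} \mathbb{F}_{|F|^k}^{\times}$: necessity holds because the minimal polynomial of $\lambda$ over $F$ has degree at most $m$, while sufficiency is witnessed by the block-diagonal construction $C_{p_\lambda} \oplus \mathrm{I}_{m-d}$, where $p_\lambda$ is the minimal polynomial of $\lambda$ (of degree $d \leq m$) and $C_{p_\lambda}$ is its companion matrix (invertible since $\lambda \neq 0$). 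Since each $\mathbb{F}_{|F|^k}^{\times}$ is cyclic of order $|F|^k - 1$, the condition $\lambda^{n-1} = 1$ on all such $\lambda$ is equivalent to $(|F|^k - 1) \mid (n-1)$ for every $1 \leq k \leq m$, that is, to $N \mid n-1$.

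I do not expect any genuine obstacle for the new implication: the only non-trivial observation is that $\mathrm{M}_m(F)$ is strongly $\pi$-regular, after which Theorem~\ref{theorem1} does all the work. In the optional direct argument for (1) $\iff$ (3), the single delicate step is the realization direction---producing, for each $\lambda \in \mathbb{F}_{|F|^k}^{\times}$ with $k \leq m$, an invertible matrix in $\mathrm{M}_m(F)$ having $\lambda$ as an eigenvalue---which the companion-block construction settles at once, and one also quietly uses Lemma~\ref{field}-type cyclicity of $\mathbb{F}_{|F|^k}^{\times}$ to pass from "every element is killed by $n-1$" to the divisibility condition defining $N$.
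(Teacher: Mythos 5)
Your primary route is exactly the paper's: the corollary is stated there as a consequence of Theorem~\ref{theorem1}, with (2) $\iff$ (3) taken from the cited \cite[Theorem 2]{abyzov2022rings} and the new clause (1) $\iff$ (2) obtained from the equivalence (1) $\iff$ (6) of Theorem~\ref{theorem1} together with the observation that the finite (hence Artinian) ring ${\rm M}_m(F)$ is strongly $\pi$-regular. So on the main line of argument you and the paper coincide, and your argument is correct.

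Your alternative eigenvalue argument is a genuine addition worth noting: it proves (1) $\iff$ (3) directly, with no appeal to Theorem~\ref{theorem1} or to the Abyzov--Tapkin citation, by identifying the eigenvalues of elements of ${\rm GL}_m(F)$ with $\bigcup_{k=1}^{m}\mathbb{F}_{|F|^k}^{\times}$ (minimal-polynomial degree bound in one direction, the companion-block matrix $C_{p_\lambda}\oplus {\rm I}_{m-d}$ in the other) and then using cyclicity of each $\mathbb{F}_{|F|^k}^{\times}$ to convert the exponent condition $\lambda^{n-1}=1$ into the divisibility $(|F|^k-1)\mid(n-1)$ for all $k\le m$, i.e.\ $N\mid (n-1)$. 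This buys a self-contained, elementary proof of the equivalence (1) $\iff$ (3) (and hence, via the paper's route, an independent check of the whole corollary), whereas the paper's approach buys brevity by outsourcing the finite-field combinatorics to the cited theorem. Both arguments are sound.
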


We continue our work with some more technical claims.

\begin{proposition}\label{2 in J(R)}
Let $R$ be an $n$-UU ring, where $n$ is an odd number. Then, the element $2$ is central nilpotent and, as such, is always contained in $J(R)$.
\end{proposition}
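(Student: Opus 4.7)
\medskip

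\noindent\textbf{Proof plan.} The natural move is to apply the $n$-UU hypothesis to the simplest nontrivial unit available, namely $u=-1$, and then extract from the resulting nilpotence of a specific integer of $R$ the two claimed conclusions (centrality and membership in the Jacobson radical).

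\medskip

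\noindent\textbf{Step 1: $2$ is nilpotent.} Since $-1\in U(R)$, the $n$-UU property gives $(-1)^n-1\in \mathrm{Nil}(R)$. Because $n$ is odd, $(-1)^n=-1$, so $-2\in\mathrm{Nil}(R)$ and therefore $2\in\mathrm{Nil}(R)$. This is the only place the oddness of $n$ is used.

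\medskip

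\noindent\textbf{Step 2: $2$ is central.} This is automatic in any ring with identity, because $2=1+1$ and $1$ lies in the center; hence $2$ commutes with every element of $R$. So by Step 1, $2$ is a central nilpotent element.

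\medskip

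\noindent\textbf{Step 3: $2\in J(R)$.} Fix $k$ with $2^k=0$. Using the centrality of $2$, for any $r\in R$ one has $(2r)^k=2^k r^k=0$, so $2r$ is nilpotent and $1-2r$ is invertible (its inverse is $1+2r+\dots +(2r)^{k-1}$). Since this holds for every $r\in R$, the standard characterization of the Jacobson radical yields $2\in J(R)$, completing the proof.

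\medskip

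\noindent\textbf{Main obstacle.} There is essentially no obstacle here: the oddness of $n$ makes the test unit $u=-1$ immediately produce $2\in\mathrm{Nil}(R)$, and the centrality of integers together with the nilpotence of $2$ gives both remaining conclusions through completely formal manipulations. The only point worth a moment of care is making sure centrality is invoked before claiming $(2r)^k=2^kr^k$, which is where noncommutativity could otherwise cause trouble.
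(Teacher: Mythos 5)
Your proof is correct and follows exactly the paper's approach: apply the $n$-UU condition to the unit $u=-1$, use the oddness of $n$ to get $(-1)^n=-1$, and conclude $2\in{\rm Nil}(R)$. Your Steps 2 and 3 merely spell out the routine verifications (centrality of $2=1+1$ and the standard Jacobson-radical criterion via invertibility of $1-2r$) that the paper leaves implicit, so there is no substantive difference.
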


\begin{proof}
We know that the element $-1$ is a unit and also that $(-1)^n = -1$. Therefore, based on our assumption, we have $-1 = 1 + q$, where $q \in {\rm Nil}(R)$. Hence, $2 = -q \in {\rm Nil}(R)$, as promised.
\end{proof}

If $n$ is even, it is not necessarily true that $2 \in {\rm Nil}(R)$, because $\mathbb{Z}$ is always a $2$-UU ring, but $2$ is definitely {\it not} an element of ${\rm Nil}(\mathbb{Z})$.

\medskip

We now partially answer in the positive a question posed in \cite{cui2020some}.

\begin{proposition}\label{prop 1.3}
Let $R$ be an $n$-UU ring, where $n$ is an odd number. Then, $J(R)$ is nil.
\end{proposition}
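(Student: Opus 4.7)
The plan is to show that an arbitrary $j \in J(R)$ is nilpotent by applying the $n$-UU hypothesis to the unit $1 + j$ and extracting $j$ from the resulting binomial expansion. The first step will be to check that the integer $n$ acts invertibly in $R$. Since $n$ is odd, Proposition~\ref{2 in J(R)} gives $2 \in {\rm Nil}(R) \subseteq J(R)$, so $(n-1)\cdot 1_R \in J(R)$ (because $n-1$ is even), and hence $n\cdot 1_R = 1 + (n-1)\cdot 1_R \in 1 + J(R) \subseteq U(R)$. This parity input is the crux of the argument, and it is where the hypothesis ``$n$ odd'' is used.

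Next, for a fixed $j \in J(R)$, the element $1 + j$ is a unit, so the $n$-UU assumption yields $(1+j)^n - 1 \in {\rm Nil}(R)$. The binomial theorem allows me to factor
\[
(1+j)^n - 1 \;=\; j \cdot \bigl( n + \tbinom{n}{2} j + \tbinom{n}{3} j^{2} + \cdots + j^{\,n-1} \bigr) \;=\; j \cdot f(j),
\]
where $f(j) - n\cdot 1_R \in J(R)$, since every remaining summand carries at least one factor of $j$. Combined with $n\cdot 1_R \in U(R)$ and $1 + J(R) \subseteq U(R)$, this forces $f(j) \in U(R)$; moreover $f(j)$ is a polynomial in $j$, so $f(j)$ and $j$ commute.

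The conclusion is then immediate: $j \cdot f(j)$ is nilpotent and $f(j)$ is a unit commuting with $j$, so raising to a suitable power gives $j^{k} f(j)^{k} = 0$, whence $j^{k} = 0$. Thus every $j \in J(R)$ is nilpotent, i.e., $J(R)$ is nil. The only real obstacle I anticipate is spotting that the odd parity of $n$, together with Proposition~\ref{2 in J(R)}, quietly forces $n\cdot 1_R$ to be a unit of $R$; once this is in hand, the rest is a routine binomial manipulation that isolates $j$ inside a nilpotent element modulo units.
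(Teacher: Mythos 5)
Your proposal is correct and follows essentially the same route as the paper's own proof: both hinge on Proposition~\ref{2 in J(R)} giving $2\in{\rm Nil}(R)$ so that $n\cdot 1_R\in U(R)$ for odd $n$, then apply the $n$-UU condition to the unit $1\pm j$, factor $j$ out of the binomial expansion, and observe that the cofactor is a unit commuting with $j$. The only cosmetic difference is that the paper normalizes by multiplying through by $n^{-1}$ to write the cofactor as $1+af(a)\in 1+J(R)$, whereas you argue directly that $f(j)\in n\cdot 1_R+J(R)$ is a unit; these are interchangeable.
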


\begin{proof}
Choose $a\in J(R)$. Therefore, $1-a\in U(R)$, and hence $(1-a)^n=1+q$, where $q\in {\rm Nil}(R)$. Thus, we have $$na - \cdots +a^n=-q,$$ and since $2 \in {\rm Nil}(R)$ and $n$ is odd, a simple check shows that it must be that $n \in U(R)$. So, we find that $$a - \cdots + n^{-1}a^{n}=-n^{-1}q  \in {\rm Nil}(R),$$ whence $$a(1+af(a))=q' \in {\rm Nil}(R),$$ where $q'=n^{-1}q$, as asserted.

Furthermore, note that $1+af(a) \in U(R)$ and $$q(1+af(a))=(1+af(a))q,$$ because $q$ is a combination of sums of powers of the element $a$. Consequently, we deduce that $a \in {\rm Nil}(R)$, as wanted.
\end{proof}

Before proceed with the case of $n$-UU rings for even number $n$, we establish the following statement.

\begin{proposition}
Let R be a $2^k$-UU ring. Then, $J(R)$ is nil.
\end{proposition}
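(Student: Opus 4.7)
The plan is to mimic the structure of the proof of Proposition~\ref{prop 1.3}, handling the new obstruction introduced when $n=2^k$. Take $a\in J(R)$; then $1-a\in U(R)$, so the $2^k$-UU hypothesis supplies $q:=(1-a)^{2^k}-1\in{\rm Nil}(R)$. Via the polynomial identity $(1-x)^{2^k}-1=-x\cdot Q(x)$ in $\mathbb{Z}[x]$, where $Q(x):=\sum_{i=0}^{2^k-1}(1-x)^i$ has constant term $Q(0)=2^k$, this rewrites as $a\cdot Q(a)=-q\in{\rm Nil}(R)$, with $a$ and $Q(a)$ commuting because $Q(a)\in\mathbb{Z}[a]$.

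In the odd case of Proposition~\ref{prop 1.3}, the constant term $n$ is invertible (since $2\in{\rm Nil}(R)$ there), so $Q(a)\in U(R)$, and the commuting-factor argument forces $a\in{\rm Nil}(R)$. For $n=2^k$ this move is blocked: $(-1)^{2^k}=1$ gives no information on $2$, and $2$ need not be a unit modulo $J(R)$. The plan is to replace inversion of $Q(a)$ by direct exploitation of the nilpotence of $aQ(a)$: pick $N$ with $(aQ(a))^N=0$; commutativity rewrites this as $a^N\,Q(a)^N=0$. Writing $Q(a)^N=2^{kN}+a\cdot h(a)$ for some $h\in\mathbb{Z}[x]$, this becomes the key relation
\[
2^{kN}\,a^N \;=\; -a^{N+1}\,h(a),
\]
and iteration yields $2^{jkN}a^N=(-1)^j a^{N+j}h(a)^j$ for every $j\geq 1$.

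The plan is then to apply the same construction to $a^N\in J(R)$, producing an analogous identity that controls $a^N$ in terms of its higher powers, and to combine the two families inside the commutative subring $\mathbb{Z}[a]\subseteq R$ so as to force $a^M\in{\rm Nil}(R)$ for some $M$, whence $a\in{\rm Nil}(R)$. The main obstacle is exactly this last step: without invertibility of $2^k$ modulo $J(R)$, one must carefully use the joint commutativity of $a$, $Q(a)$, $h(a)$ and the nilpotent $q$, together with the steadily growing $a$-exponent in the iterated identity, to collapse the bookkeeping onto an honest equality $a^M=0$. This delicate combinatorial step is the genuinely new ingredient beyond what was needed in the odd-$n$ proof of Proposition~\ref{prop 1.3}.
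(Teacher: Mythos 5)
There is a genuine gap: the step you yourself flag as ``the main obstacle'' --- collapsing your iterated identities into an honest equality $a^M=0$ --- is never carried out, and the relations you actually derive are provably insufficient to force it. Your argument uses the $2^k$-UU hypothesis only through the nilpotence of $aQ(a)$, where $Q(x)=\sum_{i=0}^{2^k-1}(1-x)^i$ and $Q(0)=2^k$. Now take $R=\mathbb{Z}_{(2)}$ (the integers localized at $2$) and $a=2\in J(R)$: since $2^k$ is even, $Q(2)=\sum_{i=0}^{2^k-1}(-1)^i=0$, so $aQ(a)=0$. Hence your key relation $2^{kN}a^N=-a^{N+1}h(a)$ holds with $N=1$, together with all of its iterates, and applying ``the same construction to $a^N$'' gives nothing new because $a^N=a$; yet $a=2$ is not nilpotent. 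Of course $\mathbb{Z}_{(2)}$ is not a $2^k$-UU ring, but that is exactly the point: the specific consequences of the hypothesis that your plan feeds into the bookkeeping are all valid in this ring, so no manipulation inside $\mathbb{Z}[a]$ based on them alone can ever conclude $a\in{\rm Nil}(R)$.

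The ingredient you are missing, and which the paper's proof uses, is to play the two units $1+a$ and $1-a$ against each other and exploit the $2$-divisibility of the binomial coefficients $\binom{2^k}{j}$. First, every coefficient $\binom{2^k}{j}$ with $j$ odd is divisible by $2^k$ (the coefficient of $a$ being exactly $2^k$); since $(1+a)^{2^k}-1$ and $(1-a)^{2^k}-1$ are commuting nilpotents (both lie in $\mathbb{Z}[a]$), their difference
$2^{k+1}\bigl(a+\lambda_3a^3+\cdots\bigr)=2^{k+1}a\bigl(1+a(\cdots)\bigr)$
is nilpotent, and as $1+a(\cdots)$ is a unit commuting with everything (recall $a\in J(R)$), this yields $2^ka\in{\rm Nil}(R)$, hence $2a\in{\rm Nil}(R)$. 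Second, every middle coefficient $\binom{2^k}{j}$ with $1\le j\le 2^k-1$ is even, so $(1+a)^{2^k}=1+2ag(a)+a^{2^k}$ with $g\in\mathbb{Z}[t]$; therefore $a^{2^k}=\bigl((1+a)^{2^k}-1\bigr)-2ag(a)$ is a difference of commuting nilpotents, hence nilpotent, and so $a\in{\rm Nil}(R)$. Your one-sided factorization $1-(1-a)^{2^k}=aQ(a)$ discards precisely this parity information, which is why your argument stalls exactly where you said it does.
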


\begin{proof}
Using induction on $k$, it can easily be shown that
$$(1\pm a)^{2^k}=1\pm 2^k \sum_{i=0}\lambda_{2i+1}a^{2i+1}+ \sum_{i=1}\lambda'_{2i}a^{2i}+a^{2^k},$$
where $\lambda_1=1$. Now, if $a \in J(R)$, we receive that
$$2^k \sum_{i=0}\lambda_{2i+1}a^{2i+1} \in {\rm Nil}(R),$$
and because $\lambda_1=1$ and $a \in J(R)$, one follows that $2^ka \in {\rm Nil}(R)$, so that $2a \in {\rm Nil}(R)$.

On the other side, by induction on $k$, it can plainly be shown that
$$(1+a)^{2^k}=1+2ag(a)+a^{2^k},$$ where $g(t) \in \mathbb{N}[t]$.
Therefore, $$(1+a)^{2^k}=1+2ag(a)+a^{2^k}=1+q,$$ where $q \in {\rm Nil}(R)$, hence $a^{2^k} \in {\rm Nil}(R)$, and hence $a \in {\rm Nil}(R)$, as asked for.
\end{proof}

As an automatic consequence, we derive:

\begin{corollary}\cite[Proposition 2.1.]{danchev2017exchange}\label{prop 1.4}
Let $R$ be a $2$-UU ring. Then, $J(R)$ is nil.
\end{corollary}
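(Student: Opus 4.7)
The plan is essentially a one-line reduction: this corollary is the special case $k=1$ of the proposition immediately preceding it, since a $2$-UU ring is by definition a $2^{k}$-UU ring with $k=1$. Invoking that proposition therefore yields at once that $J(R)$ is nil, and no further argument is required.

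Should a self-contained derivation be preferred, I would simply retrace the previous proof in the simpler setting $k=1$. Given $a \in J(R)$, both $1+a$ and $1-a$ lie in $U(R)$, so the $2$-UU hypothesis gives $(1+a)^{2}, (1-a)^{2} \in 1 + \mathrm{Nil}(R)$. All the expressions in play are polynomials in $a$ and hence commute pairwise, so $(1+a)^{2} - (1-a)^{2} = 4a$ is a difference of commuting nilpotents and therefore lies in $\mathrm{Nil}(R)$; from $(4a)^{m}=0$ one extracts $(2a)^{2m}=2^{2m}a^{2m}=0$, so $2a \in \mathrm{Nil}(R)$. Combining this with $(1+a)^{2} - 1 = 2a + a^{2} \in \mathrm{Nil}(R)$ and using commutativity, $a^{2} = (2a+a^{2}) - 2a \in \mathrm{Nil}(R)$, and hence $a \in \mathrm{Nil}(R)$.

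There is no substantive obstacle in either approach; the corollary is recorded chiefly to make explicit that the earlier result of \cite{danchev2017exchange} is recovered as a transparent particular case of the new proposition on $2^{k}$-UU rings.
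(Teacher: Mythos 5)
Your proposal is correct and matches the paper exactly: the corollary is stated there as an ``automatic consequence'' of the preceding proposition on $2^k$-UU rings, i.e., precisely your one-line specialization to $k=1$. Your optional self-contained argument is also sound and is just that proposition's proof retraced at $k=1$, so nothing further is needed.
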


We are now in a position to generalize this to the following affirmations.

\begin{proposition}\label{prop 1.6}
Let $R$ be a $6$-UU ring. Then, $J(R)$ is nil.
\end{proposition}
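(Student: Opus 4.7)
The plan mirrors that of the preceding proposition (the $2^k$-UU case): from the relations $(1\pm a)^6 - 1 \in {\rm Nil}(R)$ coming from $1\pm a \in U(R)$, extract progressively smaller nil
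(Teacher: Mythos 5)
Your proposal stops after announcing the plan, so there is no argument to check; more importantly, the plan itself does not carry over from the $2^k$-UU case. Using only the units $1\pm a$, adding and subtracting $(1+a)^6=1+q_1$ and $(1-a)^6=1+q_0$ gives the two relations $30a^2+30a^4+2a^6\in{\rm Nil}(R)$ and $12a+40a^3+12a^5=4a(3+10a^2+3a^4)\in{\rm Nil}(R)$. In the $2^k$ case the analogous step closes the proof because every odd-degree coefficient $\binom{2^k}{2i+1}$ is divisible by $2^k$ and the linear one equals $2^k$, so the odd part factors as $2^k a(1+j)$ with $j\in J(R)$; the unit $1+j$ can be cancelled, yielding $2^k a\in{\rm Nil}(R)$ and then $2a\in{\rm Nil}(R)$. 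For $n=6$ the odd coefficients are $6,20,6$, whose greatest common divisor is only $2$, so the best factorization available is $4a(3+10a^2+3a^4)$; and in a $6$-UU ring the integers $2$, $3$, $15$ need be neither invertible nor nilpotent (the ring $\mathbb{Z}$ is $6$-UU), so you can cancel neither the factor $3+10a^2+3a^4$ nor the coefficients $30$ and $2$ in the first relation. This is exactly where your route stalls.

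The paper's proof requires two additional ideas absent from your sketch. First, it also invokes the units $1\pm a^2$, i.e., the same expansions with $a$ replaced by $a^2$, which produce $30a^4+30a^8+2a^{12}\in{\rm Nil}(R)$. Second, it multiplies the relation $30a^2+30a^4+2a^6\in{\rm Nil}(R)$ by $a^6$ and subtracts the result from the $a^2$-relation; since all elements involved are polynomials in $a$ and hence commute, the difference $30a^4(1-a^6)$ is nilpotent, and because $a\in J(R)$ makes $1-a^6$ a unit, this isolates $30a^4$, whence $30a\in{\rm Nil}(R)$. Feeding this back into the first relation gives $2a^6$, hence $2a$, nilpotent; with $2a$ nilpotent the expansion of $(1-a)^6$ reduces to $15a^2+15a^4+a^6\in{\rm Nil}(R)$, and repeating the same multiply-by-$a^6$-and-subtract trick against its $a^2$-analogue yields $15a\in{\rm Nil}(R)$. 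Since $2a$ and $15a$ are commuting nilpotents, $a=15a-7\cdot(2a)$ is nilpotent (equivalently, the relation $15a^2+15a^4+a^6\in{\rm Nil}(R)$ now forces $a^6\in{\rm Nil}(R)$). Without the auxiliary units $1\pm a^2$ and the cancellation of the unit $1-a^6$, the coefficients $30$ and $15$ cannot be removed, so the proposal as stated cannot be completed.
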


\begin{proof}
Given $a \in J(R)$, we have $1\pm a, 1\pm a^2 \in U(R)$. Thus, one verifies that:

\begin{align}
    (1-a)^6 &=1-6a+15a^2-20a^3+15a^4-6a^5+a^6 =1+q_0 \label{eq 1}\\
    (1+a)^6 &=1+6a+15a^2+20a^3+15a^4+6a^5+a^6 =1+q_1 \label{eq 2}\\
    (1-a^2)^6 &=1-6a^2+15a^4-20a^6+15a^8-6a^{10}+a^{12} =1+q_2 \label{eq 3}\\
    (1+a^2)^6 &=1+6a^2+15a^4+20a^6+15a^8+6a^{10}+a^{12} =1+q_3 \label{eq 4},
\end{align}
where all $q_i \in {\rm Nil}(R)$ and it is evident that $q_iq_j = q_jq_i$. Thus, with equations (\ref{eq 1}) and (\ref{eq 2}) at hand, we receive:

\medskip

Eq (01): $30a^2+30a^4+2a^6=q' \in {\rm Nil}(R)$

\medskip

\noindent and, according to equations (\ref{eq 3}) and (\ref{eq 4}), we receive:

\medskip

Eq (02): $30a^4+30a^8+2a^{12}=q'' \in {\rm Nil}(R)$.

\medskip

\noindent Since $aq'=q'a$, if we multiply by $a^6$ in equation Eq (01), we will obtain:

\medskip

Eq (03): $30a^8+30a^{10}+2a^{12}=p \in {\rm Nil}(R)$.

\medskip

\noindent Since $q''p=pq''$, it must be that $q''-p \in {\rm Nil}(R)$, so with the aid of Eq (03) and Eq (02) we arrive at the relation $30a \in {\rm Nil}(R)$ which insures $2a \in {\rm Nil}(R)$. Therefore, equalities (\ref{eq 1}) and (\ref{eq 3}) given above help us to get that

\begin{align}
15a^2+15a^4+a^6&=p_0 \in {\rm Nil}(R), \label{eq 5}\\
15a^4+15a^8+a^{12}&=p_1 \in {\rm Nil}(R). \label{eq 6}
\end{align}

Furthermore, if we multiply by $a^6$ in equation (\ref{eq 5}), we will obtain:

\medskip

Eq (04): $15a^8+15a^{10}+a^{12}=p_2 \in {\rm Nil}(R)$.

\medskip

Now, in view of equation (\ref{eq 6}) and Eq (04), we can derive that $15a \in {\rm Nil}(R)$. Therefore, bearing in mind that both elements $2a$ and $15a$ are nilpotents, the equation (\ref{eq 1}) quoted above ensures that $a^6 \in {\rm Nil}(R)$. Finally, $a \in {\rm Nil}(R)$, and thus we are done.
\end{proof}

\begin{proposition}\label{prop 1.7}
Let R be a $10$-UU ring. Then, $J(R)$ is nil.
\end{proposition}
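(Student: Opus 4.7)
The plan is to adapt the template of Proposition~\ref{prop 1.6}, though the longer binomial expansion $(1\pm a^k)^{10}$ makes the bookkeeping substantially more involved. Given $a\in J(R)$, the elements $1\pm a^k$ are all units in $R$ for every $k\ge 1$, so the $10$-UU hypothesis supplies the commuting nilpotents
\begin{align*}
E_k &:= (1-a^k)^{10}+(1+a^k)^{10}-2 = 2\bigl(45a^{2k}+210a^{4k}+210a^{6k}+45a^{8k}+a^{10k}\bigr),\\
O_k &:= (1+a^k)^{10}-(1-a^k)^{10} = 2\bigl(10a^k+120a^{3k}+252a^{5k}+120a^{7k}+10a^{9k}\bigr),
\end{align*}
both lying in ${\rm Nil}(R)$. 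The end-goal is to establish the two relations $2a\in{\rm Nil}(R)$ and $45a\in{\rm Nil}(R)$; together these annihilate every coefficient of $(1-a)^{10}-1$ except that of $a^{10}$ (which is $1$), so the whole expansion collapses to $a^{10}\in{\rm Nil}(R)$, whence $a\in{\rm Nil}(R)$.

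I would dispose of the easier half first. Assume $2a\in{\rm Nil}(R)$ has already been obtained. Then every coefficient of $(1-a)^{10}-1$ except those of $a^2,a^8,a^{10}$ is even, so the corresponding terms are nilpotent multiples of $2a$ that absorb into ${\rm Nil}(R)$, leaving $R_1:=45a^2+45a^8+a^{10}\in{\rm Nil}(R)$. Substituting $a\mapsto a^2$ similarly yields $R_2:=45a^4+45a^{16}+a^{20}\in{\rm Nil}(R)$. Mimicking the decisive subtraction of Proposition~\ref{prop 1.6}, one computes
\[
R_2-a^{10}R_1 \;=\; 45(a^4-a^{12}+a^{16}-a^{18}) \;=\; 45(1-a^2)\bigl(a^4+a^6+a^8+a^{10}+a^{16}\bigr),
\]
and factors the last bracket as $a^4(1+a^2+a^4+a^6+a^{12})$. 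Since $(1-a^2)\in U(R)$ and $1+a^2+a^4+a^6+a^{12}\in 1+J(R)\subseteq U(R)$, cancelling these two units yields $45a^4\in{\rm Nil}(R)$, and hence $45a\in{\rm Nil}(R)$ by the standard power-trick. Substituting this back into $R_1$ kills its first two summands, forcing $a^{10}\in{\rm Nil}(R)$ and therefore $a\in{\rm Nil}(R)$.

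The hard half is the derivation of $2a\in{\rm Nil}(R)$ in the first place. The blueprint is to produce a relation of the form $Ca\cdot v\in{\rm Nil}(R)$ with $v\in U(R)$ and $C$ an integer dividing $\gcd(90,420)=30$; cancelling $v$ then gives $Ca\in{\rm Nil}(R)$, and feeding this into $E_1=90a^2+420a^4+420a^6+90a^8+2a^{10}$ leaves $2a^{10}\in{\rm Nil}(R)$, which promotes to $2a\in{\rm Nil}(R)$. A direct computation of the natural analog of the $6$-UU key subtraction gives
\[
E_2-a^{10}E_1 \;=\; 30(1-a^2)\bigl(3a^4+3a^6+17a^8+17a^{10}+28a^{12}+14a^{14}+3a^{16}\bigr),
\]
so after cancelling $1-a^2\in U(R)$ one obtains $30a^4\cdot P(a)\in{\rm Nil}(R)$ for a polynomial $P$ with $P(0)=3$. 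Unlike the $6$-UU case, where the analogous polynomial was simply $1$ and the extraction was immediate, here $P(0)=3$ need not be a unit of $R$, so the factorization is not yet usable. Circumventing this is the main obstacle: the plan is to combine the above with the odd-power relation $O_1=4a(5+60a^2+126a^4+60a^6+5a^8)\in{\rm Nil}(R)$ (whose polynomial factor has constant term $5$), together with suitable further $E_k,O_k$ and multiplications by $a^j$, so that a Bezout-type integer combination exploiting $\gcd(30,20)=10$ eventually isolates a clean relation $Ca\cdot v\in{\rm Nil}(R)$ whose polynomial factor $v$ does lie in $1+J(R)$. This combinatorial distillation, which replaces the single neat subtraction that sufficed in Proposition~\ref{prop 1.6}, is the technical heart of the argument.
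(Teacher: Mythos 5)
You have the right skeleton, and your second stage is sound: granting $2a\in{\rm Nil}(R)$, your derivation of $R_1=45a^2+45a^8+a^{10}\in{\rm Nil}(R)$, the subtraction $R_2-a^{10}R_1=45(1-a^2)a^4(1+a^2+a^4+a^6+a^{12})$, and the conclusion $45a\in{\rm Nil}(R)$, hence $a^{10}\in{\rm Nil}(R)$ and $a\in{\rm Nil}(R)$, is exactly the paper's second step (the paper uses the substitution $a\mapsto a^3$ and multiplier $a^{20}$ instead of your $a\mapsto a^2$ and $a^{10}$; both work). The genuine gap is the first stage: you never actually prove $2a\in{\rm Nil}(R)$. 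Your even-part subtraction $E_2-a^{10}E_1=30(1-a^2)a^4\,P(a)$ with $P(0)=3$ stalls exactly as you say, since $P(a)$ need not be a unit, and the ``Bezout-type combination of further $E_k,O_k$'' you invoke to get past this is a hope, not an argument --- no specific combination is exhibited, and it is precisely here that the whole proof lives or dies.

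The paper escapes this trap by running the subtraction on the \emph{odd} parts instead of the even ones. Take your $O_1=20a+240a^3+504a^5+240a^7+20a^9$ and $O_3=20a^3+240a^9+504a^{15}+240a^{21}+20a^{27}$. The reason for the exponent $3$ is that $a^{10}\cdot 504a^5=504a^{15}=504(a^3)^5$, so in $a^{10}O_1-O_3$ the middle coefficient $504$ --- the only coefficient of $O_k$ not divisible by $20$ --- cancels identically, leaving
\[
a^{10}O_1-O_3=20a^3\bigl(-1-12a^6+a^8+12a^{10}+12a^{14}+a^{16}-12a^{18}-a^{24}\bigr),
\]
whose bracket lies in $-1+J(R)\subseteq U(R)$. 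Since $a^{10}O_1-O_3$ is a difference of commuting nilpotents, cancelling this unit gives $20a^3\in{\rm Nil}(R)$, hence $20a\in{\rm Nil}(R)$; then $(10a)^2=(20a)(5a)$ gives $10a\in{\rm Nil}(R)$, and plugging this into your $E_1=90a^2+420a^4+420a^6+90a^8+2a^{10}$ leaves $2a^{10}\in{\rm Nil}(R)$, whence $2a\in{\rm Nil}(R)$. So no Bezout gymnastics is needed: choosing the odd parts and the exponent $3$ makes the obstruction (the non-unit constant term) vanish, and with this replacement your stage two completes the proof.
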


\begin{proof}
Given $a \in J(R)$, we have $(1\pm a), (1\pm a^3) \in U(R)$, and similarly to Proposition \ref{prop 1.6} we obtain:

\begin{align}
    20a+240a^3+504a^5+240a^7+20a^9 &=q_1 \label{eq 7}\\
    20a^3+240a^9+504a^{15}+240a^{21}+20a^{27} &=q_2 \label{eq 8}\\
    90a^2+420a^4+420a^6+90a^8+2a^{10} &=q_3 \label{eq 9},
\end{align}
where $q_i \in {\rm Nil}(R)$, and $q_iq_j=q_jq_i$. Since $aq_i=q_ia$, if we multiply by $a^{10}$ in equation (\ref{eq 7}), we will obtain:

\medskip

Eq (05): $20a^{11}+240a^{13}+504a^{15}+240a^{17}+20a^{19}=q_4 \in {\rm Nil}(R)$.

\medskip

\noindent Since $q_iq_j=q_jq_i$, it must be that $q_4 -q_2 \in {\rm Nil}(R)$, so with the aid of equation (\ref{eq 8}) and Eq (05) we arrive at the relation $20a \in {\rm Nil}(R)$, so $10a \in {\rm Nil}(R)$. Now, from equation (\ref{eq 9}), we have $2a \in {\rm Nil}(R)$. But $1-(1-a)^{10}$, $1-(1-a^3)^{10} \in {\rm Nil}(R)$, and so we conclude that:

\begin{align}
    45a^2+45a^8 +a^{10} &=p_1 \in {\rm Nil}(R) \label{eq 10}\\
    45a^{6}+45a^{24} +a^{30} &=p_2 \in {\rm Nil}(R) \label{eq 11},
\end{align}

\noindent Since $ap_i=p_ia$, if we multiply by $a^{20}$ in equation (\ref{eq 10}), we will have:

\medskip

Eq (06): $45a^{22}+45a^{28} +a^{30}=p_3 \in {\rm Nil}(R)$.

\medskip

\noindent Since $p_ip_j=p_jp_i$, it must be that $p_3 -p_2 \in {\rm Nil}(R)$, so with the aid of equation (\ref{eq 11}) and Eq (06) we arrive at the relation $45a \in {\rm Nil}(R)$, and thus equation (\ref{eq 10}) implies that $a^{10} \in {\rm Nil}(R)$. Therefore, $a \in {\rm Nil}(R)$, and so we are set.
\end{proof}

We continue our work with the following statements.

\begin{proposition}\label{pro 1}
Let $R$ be an $n$-UU ring and $k \in \mathbb{N}$ such that $n|k$. Then, $R$ is a $k$-UU ring.
\end{proposition}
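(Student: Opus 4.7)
The proof should be a short, direct computation. Since $n \mid k$, write $k = nm$ for some $m \in \mathbb{N}$. Given any $u \in U(R)$, the $n$-UU hypothesis supplies $q \in {\rm Nil}(R)$ with $u^n = 1 + q$. Then
\[
u^k = (u^n)^m = (1+q)^m.
\]
The plan is to show $u^k - 1 = (1+q)^m - 1 \in {\rm Nil}(R)$, which is exactly the $k$-UU condition.

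For this, expand by the binomial theorem (legitimate because $1$ and $q$ commute):
\[
(1+q)^m - 1 = \sum_{i=1}^{m} \binom{m}{i} q^{i} = q \cdot r, \qquad r := \sum_{i=1}^{m} \binom{m}{i} q^{i-1}.
\]
Since $r$ is a polynomial in $q$, the elements $q$ and $r$ commute. If $s$ is chosen with $q^{s} = 0$, then
\[
(qr)^{s} = q^{s} r^{s} = 0,
\]
so $(1+q)^m - 1$ is nilpotent, as desired.

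There is no real obstacle here; the only thing to be careful about is invoking commutativity of $q$ with the other factor before applying the binomial formula and before collapsing $(qr)^s$ to $q^s r^s$. Both follow from the fact that everything in sight is a polynomial in the single element $q$. Hence $u^k \in 1 + {\rm Nil}(R)$ for every $u \in U(R)$, completing the proof.
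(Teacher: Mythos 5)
Your proof is correct and follows essentially the same route as the paper: write $k=nm$, observe $u^k=(u^n)^m=(1+q)^m$, and check that $(1+q)^m-1$ is nilpotent. The paper simply declares this last nilpotency ``obvious,'' whereas you spell out the binomial-expansion justification --- a fine (and slightly more careful) rendering of the same argument.
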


\begin{proof}
Since $R$ is an $n$-UU ring, for any $u \in U(R)$ we may write that $u^n = 1 + q$, where $q \in {\rm Nil}(R)$. Since $n|k$, there exists an integer $t$ such that $k = tn$. Thus, $$u^k = (u^n)^t = (1 + q)^t = 1 + q',$$ where $q' = (1+q)^t-1$ which is obviously nilpotent. Therefore, $u^k = 1 + q'$, where $q' \in {\rm Nil}(R)$. Hence, $R$ is a $k$-UU ring, as stated.
\end{proof}

\begin{proposition} \label{n odd two item}
Let $n$ be an odd integer. Then, $R$ is an $n$-UU ring if, and only if, the next two points are valid:
    \begin{enumerate}
        \item 2 is nilpotent.
        \item $R$ is a $2^kn$-UU ring for every positive integer $k$.
    \end{enumerate}
\end{proposition}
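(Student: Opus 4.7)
The plan is to handle the two directions separately. The forward direction is essentially an immediate assembly of two earlier results in the excerpt, while the converse reduces to a short factorization argument combined with the nilpotence of $2$.

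For the implication ($\Rightarrow$), assume $R$ is an $n$-UU ring with $n$ odd. Then Proposition \ref{2 in J(R)} immediately forces $2 \in \mathrm{Nil}(R)$, giving item (1). For item (2), I would simply note that $n \mid 2^{k}n$ for every positive integer $k$, so Proposition \ref{pro 1} delivers at once that $R$ is a $2^{k}n$-UU ring.

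For the converse ($\Leftarrow$), the key move is to specialize hypothesis (2) to $k = 1$. Given an arbitrary $u \in U(R)$, I would write $u^{2n} - 1 \in \mathrm{Nil}(R)$ and then exploit the factorization $u^{2n} - 1 = (u^{n} - 1)(u^{n} + 1)$. Setting $w := u^{n} - 1$, this reads $w(w+2) = w^{2} + 2w \in \mathrm{Nil}(R)$. Since $2$ is central and nilpotent by (1), the product $2w$ is nilpotent, and everything in sight (namely $w$, $2w$, and $w^{2} + 2w$) is a polynomial in $u$ together with the central integer $2$, hence all these elements pairwise commute. Consequently, $w^{2} = (w^{2} + 2w) - 2w$ is a difference of two commuting nilpotents, and therefore $w^{2} \in \mathrm{Nil}(R)$, from which $w = u^{n} - 1 \in \mathrm{Nil}(R)$. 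This is exactly the definition of $R$ being $n$-UU.

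I do not anticipate any serious obstacle. The only subtle point is verifying the commutativity needed to conclude that a sum (or difference) of nilpotents is nilpotent, but this is automatic here because all the relevant elements are polynomials in the single element $u$ together with the central integer $2$. Notice also that only the $k = 1$ instance of hypothesis (2) is actually used, so the statement is in some sense slightly stronger than strictly needed, which is a pleasant sanity check on the argument.
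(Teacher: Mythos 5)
Your proof is correct and follows essentially the same route as the paper: the forward direction is the identical appeal to Propositions \ref{2 in J(R)} and \ref{pro 1}, and your converse is precisely the paper's computation $(1-a^n)^{2^k}=q+2(1-af(a))$ specialized to $k=1$, where it becomes the explicit identity $(u^n-1)^2=(u^{2n}-1)-2(u^n-1)$. Your remark that only one instance of hypothesis (2) is needed is likewise consistent with the paper, whose argument also uses just a single value of $k$.
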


\begin{proof}
"$\Rightarrow$". According to Proposition \ref{2 in J(R)} and  \ref{pro 1}, there is nothing to prove.\\

"$\Leftarrow$". Given conditions (i) and (ii) hold. Supposing $a \in U(R)$, then $a^{2^kn}-1 =q \in {\rm Nil}(R)$, so a routine manipulation leads to
$$(1-a^n)^{2^k}=1-2af(a)+a^{2^kn}=q +2(1-af(a)) \in {\rm Nil}(R),$$ where $f(a)\in \mathbb{Z}[t]$, as wanted.
\end{proof}

\begin{corollary}\label{cor UU}
A ring $R$ is a UU ring if, and only if,
    \begin{enumerate}
        \item 2 is nilpotent.
        \item R is a $2^k$-UU ring for every positive integer $k$.
    \end{enumerate}
\end{corollary}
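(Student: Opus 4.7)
The plan is to deduce this corollary immediately by specializing Proposition~\ref{n odd two item} to $n = 1$. First I would observe that $n = 1$ is odd, so the hypothesis of that proposition is met. Next I would unwind the definition to see that a $1$-UU ring is exactly a UU ring, since the condition $u^{1} \in 1 + {\rm Nil}(R)$ for every $u \in U(R)$ is precisely the UU condition. With $n = 1$, the quantity $2^{k}n$ collapses to $2^{k}$, so condition (ii) of Proposition~\ref{n odd two item} matches condition (ii) of the corollary verbatim. Thus the corollary is nothing more than the $n=1$ instance of the preceding proposition.

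If one preferred a self-contained argument, I would verify the two cited ingredients directly in the $n=1$ case. For the forward direction, Proposition~\ref{2 in J(R)} with $n=1$ gives $-1 = 1 + q$ for some $q \in {\rm Nil}(R)$, hence $2 = -q \in {\rm Nil}(R)$; and Proposition~\ref{pro 1} with $n=1$ (which divides every $2^{k}$) shows that a UU ring is automatically $2^{k}$-UU for every positive integer $k$. For the converse, the computation in the proof of Proposition~\ref{n odd two item} specializes cleanly: given $a \in U(R)$, choose $k$ so that $a^{2^{k}} - 1 = q \in {\rm Nil}(R)$, and expand
$$
(1 - a)^{2^{k}} \;=\; 1 - 2 a f(a) + a^{2^{k}} \;=\; q + 2\bigl(1 - a f(a)\bigr),
$$
with $f \in \mathbb{Z}[t]$, where the middle equality uses that the inner binomial coefficients $\binom{2^{k}}{i}$ ($0<i<2^{k}$) are even. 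The right-hand side is nilpotent because $2$ is a central nilpotent and $q$ is a polynomial in $a$, so the two summands commute; consequently $1 - a \in {\rm Nil}(R)$, and $R$ is UU.

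The only potential obstacle is confirming that no step in the parent proposition degenerates at $n = 1$. A quick inspection of the binomial identity underlying the key expansion above shows that it does not, so the corollary falls out of Proposition~\ref{n odd two item} without any additional work.
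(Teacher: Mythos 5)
Your proposal is correct and takes the same route as the paper, which states Corollary~\ref{cor UU} without proof precisely because it is the $n=1$ instance of Proposition~\ref{n odd two item} (with $1$-UU being by definition the UU condition and $2^k\cdot 1 = 2^k$). Your supplementary check that the ingredients do not degenerate at $n=1$ — Proposition~\ref{2 in J(R)} giving $2\in{\rm Nil}(R)$, Proposition~\ref{pro 1} with $1\mid 2^k$, and the binomial expansion $(1-a)^{2^k}=q+2(1-af(a))$ with commuting nilpotent summands — is accurate and mirrors the paper's own proofs of those statements.
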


\begin{corollary}\cite[Proposition 2.2]{abdolyousefi2018rings}
A ring $R$ is a UU ring if, and only if,
    \begin{enumerate}
        \item 2 is nilpotent.
        \item $R$ is a 2-UU ring.
    \end{enumerate}
\end{corollary}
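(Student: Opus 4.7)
The plan is to invoke the immediately preceding Corollary \ref{cor UU}, which already characterizes UU rings as those in which $2$ is nilpotent and which are $2^k$-UU for every $k \geq 1$. Under the present hypotheses (i) and (ii), only the apparently weaker condition of being $2$-UU is assumed, so the heart of the argument is to upgrade ``$2$-UU'' to ``$2^k$-UU for all $k$'' via a short induction, and then quote Corollary \ref{cor UU}.

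First I would dispose of the forward direction ``$\Rightarrow$,'' which is essentially by definition. Applying the UU hypothesis to the unit $-1$ yields $-1 = 1 + q$ with $q \in {\rm Nil}(R)$, so $2 = -q$ is nilpotent, giving (i). For (ii), any unit $u = 1 + q$ with $q$ nilpotent satisfies $u^2 - 1 = 2q + q^2$, a sum of two commuting nilpotents (since $2\cdot 1_R$ is central and both summands are polynomials in $q$), hence nilpotent.

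For the reverse direction ``$\Leftarrow$,'' I would verify the hypotheses of Corollary \ref{cor UU} by induction on $k$. The base case $k = 1$ is hypothesis (ii). For the inductive step, given $u \in U(R)$, use the inductive hypothesis to write $u^{2^k} = 1 + q$ with $q \in {\rm Nil}(R)$. Squaring gives
\[
u^{2^{k+1}} - 1 = 2q + q^2,
\]
and since $2\cdot 1_R$ is central and nilpotent by (i), the element $2q$ is nilpotent; it commutes with the nilpotent $q^2$ (both being polynomials in $q$), so the sum $2q + q^2$ is nilpotent. Hence $R$ is $2^{k+1}$-UU, completing the induction, and Corollary \ref{cor UU} then delivers that $R$ is a UU ring.

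There is no substantive obstacle: the statement is an immediate strengthening of Corollary \ref{cor UU}, exploiting the observation that the squaring identity $(1+q)^2 = 1 + 2q + q^2$ preserves ``$1 +$ nilpotent'' form once $2$ itself is nilpotent. The only routine fact invoked is the standard one that a product (or sum) of commuting nilpotents is nilpotent, applied in the same manner at each stage of the induction.
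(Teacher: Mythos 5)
Your proof is correct. The paper itself states this corollary without proof (it is quoted from Sheibani--Chen and placed as an automatic consequence of the preceding material), and your route --- upgrading ``$2$-UU'' to ``$2^k$-UU for every $k$'' and then invoking Corollary \ref{cor UU} --- is exactly the derivation the paper's arrangement suggests, so there is no substantive divergence. Two small remarks. First, your induction re-proves a special case of the paper's Proposition \ref{pro 1} ($n$-UU together with $n \mid k$ implies $k$-UU), so the whole inductive step could be replaced by a citation of that proposition. Second, hypothesis (i) is not actually needed where you invoke it: $2q$ is nilpotent as soon as $q$ is, since $2$ is central and $(2q)^m = 2^m q^m$; the nilpotency of $2$ enters only through Corollary \ref{cor UU}. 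Alternatively, one can bypass the induction entirely by imitating the paper's proof of Proposition \ref{n odd two item} with $n=k=1$: writing $(1-u)^2 = (u^2-1) + 2(1-u)$, the right-hand side is a sum of two commuting nilpotents (here condition (i) is genuinely used), so $1-u$ is nilpotent and $R$ is UU directly.
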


In \cite{cui2020some}, it was asked of whether or {\it not} an $\pi$-UU ring that is nil clean is a strongly $\pi$-regular. We answer a specific case of this question in the following lemma (compare with \cite{danchev2016rings} too).

\begin{lemma}
A ring $R$ is a strongly nil-clean ring if, and only if,
 \begin{enumerate}
     \item $R$ is a nil-clean ring;
     \item R is a $2^k$-UU ring for every positive integer $k$.
 \end{enumerate}
In particular, $R$ is a strongly $\pi$-regular ring.
\end{lemma}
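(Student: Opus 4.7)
The plan is to combine the preceding Corollary~\ref{cor UU} with the Diesl characterization \cite[Corollary~3.11]{diesl2013nil} (strongly nil-clean $\iff$ strongly $\pi$-regular UU) quoted just above the lemma. The forward direction is essentially free, while the converse reduces to an upgrade of ``nil-clean~$+$~UU'' to ``strongly nil-clean''.

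For the forward implication, if $R$ is strongly nil-clean then (1) is immediate; by \cite[Corollary~3.11]{diesl2013nil}, $R$ is UU, so every unit has the form $u = 1 + q$ with $q \in {\rm Nil}(R)$, and expanding $u^{2^k} = (1+q)^{2^k}$ binomially yields (2). For the converse, I would first deduce $2 \in {\rm Nil}(R)$ directly from nil-cleanness: applying the nil-clean decomposition to $-1$, write $-1 = e + q$ with $e^2 = e$ and $q \in {\rm Nil}(R)$, substitute $e = -1-q$ into $e^2 = e$, and rearrange to obtain $2 = q(-3-q)$. This identity lives in the commutative subring $\Z[q] \subseteq R$, so nilpotency of $q$ forces nilpotency of $2$. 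Combined with hypothesis (2), Corollary~\ref{cor UU} now yields that $R$ is UU.

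It remains to upgrade ``nil-clean~$+$~UU'' to ``strongly nil-clean''; the ``in particular'' clause on strong $\pi$-regularity then follows automatically from \cite[Corollary~3.11]{diesl2013nil}. By \cite[Theorem~8]{abyzov2022rings} applied with $n=2$, strong nil-cleanness is equivalent to $a - a^2 \in {\rm Nil}(R)$ for every $a \in R$. Since $J(R)$ is nil by Corollary~\ref{prop 1.4}, it suffices to show that the semiprimitive quotient $R/J(R)$ is Boolean, for then $a - a^2 \in J(R) \subseteq {\rm Nil}(R)$ for every $a$. The main obstacle lies in this final step --- establishing that the semiprimitive quotient, which inherits both nil-cleanness and UU, is actually Boolean. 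The standard argument exploits that in a UU ring the only idempotent unit is $1$: thus in $R/J(R)$, where no nontrivial nilpotent ideals survive, the nil-clean decomposition must collapse to its idempotent part, forcing every element of $R/J(R)$ to be idempotent, as required. All other pieces are routine bookkeeping with Corollary~\ref{cor UU} and the two quoted theorems.
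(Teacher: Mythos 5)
Your proposal tracks the paper's own strategy for most of its length, and those parts are correct: the forward direction is exactly as you say; your derivation of $2\in{\rm Nil}(R)$ from the nil-clean decomposition $-1=e+q$ is valid (since $e=-1-q$ lies in the commutative subring $\mathbb{Z}[q]$, the identity $2=-q(3+q)$ indeed forces $2$ nilpotent, and this is the same fact the paper simply cites from \cite{diesl2013nil}); Corollary~\ref{cor UU} then gives that $R$ is UU, just as in the paper; and the reduction of strong nil-cleanness to ``$J(R)$ nil and $R/J(R)$ Boolean'' via \cite[Theorem 8]{abyzov2022rings} is legitimate. The problem is that the step you yourself identify as ``the main obstacle'' --- that the semiprimitive quotient of a nil-clean UU ring is Boolean --- is precisely the mathematical core of the lemma, and the argument you sketch for it does not work. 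The fact that ``in a UU ring the only idempotent unit is $1$'' is true in \emph{every} ring (if $e^2=e$ is invertible, multiply by $e^{-1}$ to get $e=1$), so it carries no UU-specific force. And semiprimitivity of $R/J(R)$ does not make nilpotent parts of nil-clean decompositions vanish: a semiprimitive ring can contain an abundance of nilpotent \emph{elements} --- ${\rm M}_2(\mathbb{Z}_2)$ is semiprimitive and nil-clean yet far from Boolean --- and an individual nilpotent element need not generate a nil (or nilpotent) ideal. What you actually need is that $R/J(R)$ is \emph{reduced}; only then does a nil-clean decomposition $\bar a=\bar e+\bar q$ collapse to $\bar a=\bar e$. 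Your sketch gives no route to reducedness.

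This is exactly the point where the paper brings in external machinery for which your proposal offers no substitute: from UU it deduces via \cite[Corollary 2.13]{vster2016rings} that ${\rm Nil}(R)$ forms a subring (an NR-ring); since nil-clean rings are exchange, \cite[Corollary 2.17]{chen2015linearly} then yields that $R/J(R)$ is reduced, hence ${\rm Nil}(R)\subseteq J(R)$; combined with $J(R)\subseteq{\rm Nil}(R)$ from \cite[Proposition 3.16]{diesl2013nil} this gives ${\rm Nil}(R)=J(R)$, and \cite[Theorem 2.3]{chen2017strongly} finishes the proof. In short: everything in your write-up before the final step is sound and essentially identical to the paper, but the final step is a genuine gap --- the interplay of UU with the exchange property (through the \v{S}ter and Chen results, or some equivalent argument) is indispensable, and the reasoning you propose in its place is vacuous.
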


\begin{proof}
"$\Rightarrow$". It is straightforward.\\

"$\Leftarrow$". Since $R$ is a  nil-clean ring, we know that $2 \in {\rm Nil}(R)$ (see \cite{diesl2013nil}). Thus, by the usage of Corollary~\ref{cor UU}, $R$ will be a UU ring. With the help of \cite[Corollary 2.13]{vster2016rings}, we have that $R$ is an NR-ring (that is, ${\rm Nil}(R)$ forms an ideal of $R$).

On the other hand, since $R$ is exchange, by \cite[Corollary 2.17]{chen2015linearly} we can deduce that the factor-ring $R/J(R)$ is reduced. Hence, ${\rm Nil}(R) \subseteq J(R)$. Moreover, \cite[Proposition 3.16]{diesl2013nil} applies to get that $J(R) \subseteq {\rm Nil}(R)$. Therefore, ${\rm Nil}(R)=J(R)$ is an ideal. Finally, by using \cite[Theorem 2.3]{chen2017strongly}, $R$ is a strongly nil-clean ring, as desired.
\end{proof}

We now begin to explore some crucial characteristic properties of $n$-UU rings starting with the following two technicalities.

\begin{lemma}\label{sub}
(a) An arbitrary subring $S$ of an $n$-UU ring $R$ is again an $n$-UU ring.

(b) A finite direct product $R=R_1 \times \cdots \times R_m $ is $n$-UU if, and only if, each direct component $R_i$ is $n$-UU.
\end{lemma}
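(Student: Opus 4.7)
My approach to both parts relies on the basic observation that nilpotency and invertibility behave well under inclusion of unital subrings and under finite direct products, so that everything reduces to unwinding the definitions.

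For part (a), I would begin by taking any unit $u \in U(S)$. Since $S$ is a unital subring of $R$ (sharing the same identity $1$), the inverse $u^{-1}\in S$ remains an inverse in $R$, hence $u \in U(R)$. Invoking the $n$-UU hypothesis on $R$ yields $u^n - 1 \in {\rm Nil}(R)$, so $(u^n-1)^k = 0$ in $R$ for some $k \geq 1$. But the element $u^n - 1$ already lies in $S$, and the equation $(u^n-1)^k = 0$ involves only elements of $S$ under the common multiplication, so it holds in $S$ as well. Therefore $u^n - 1 \in {\rm Nil}(S)$, showing $S$ is $n$-UU.

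For part (b), I would treat both implications via projections and canonical embeddings. For the forward direction, assume $R = R_1 \times \cdots \times R_m$ is $n$-UU and fix any $u_i \in U(R_i)$. The element $\widetilde{u}_i := (1,\ldots,1,u_i,1,\ldots,1) \in R$ is a unit, with inverse $(1,\ldots,1,u_i^{-1},1,\ldots,1)$, so by hypothesis $(\widetilde{u}_i^{\,n} - 1)^k = 0$ in $R$ for some $k$. Computing componentwise, $\widetilde{u}_i^{\,n} - 1$ has $u_i^n - 1$ in the $i$-th coordinate and zeros elsewhere, so the relation forces $(u_i^n - 1)^k = 0$ in $R_i$, as needed. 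For the converse, any $u \in U(R)$ is of the form $u = (u_1,\ldots,u_m)$ with each $u_i \in U(R_i)$; by hypothesis, each $u_i^n - 1$ is nilpotent of some index $k_i$, and taking $k = \max_i k_i$ yields $(u^n - 1)^k = 0$ componentwise in $R$.

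There is no serious obstacle in either part: both assertions are immediate consequences of the definitions once one recalls how units and nilpotent elements distribute across direct products and persist under passage to subrings. The only delicate point worth flagging is the implicit convention that \emph{subring} means unital subring sharing the identity $1$ with $R$, which is essential for the inclusion $U(S) \subseteq U(R)$ used in part (a).
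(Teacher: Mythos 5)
Your part (b) is correct and essentially matches the paper's argument (the paper derives necessity from part (a), while you argue it directly via the embeddings $\widetilde{u}_i=(1,\ldots,1,u_i,1,\ldots,1)$; both work). The problem is in part (a): you have explicitly restricted ``subring'' to mean a unital subring sharing the identity $1_R$, and flagged this as an essential convention. That restriction is a genuine gap, because the lemma as stated says \emph{arbitrary} subring, and the paper proves and later uses the stronger version in which $S$ is a ring with its own identity $1_S$ that need not equal $1_R$. Indeed, Corollary~\ref{corner} applies the lemma to corner rings $eRe$ (identity $e\neq 1_R$ in general), Theorem~\ref{thm 1.11} applies it to the diagonal components $A$ and $B$ of a Morita context ring (identities $\left(\begin{smallmatrix}1_A&0\\0&0\end{smallmatrix}\right)$ and $\left(\begin{smallmatrix}0&0\\0&1_B\end{smallmatrix}\right)$), and even the necessity direction of part (b) uses subrings $R_i$ whose identity in $R$ is $(0,\ldots,0,1,0,\ldots,0)$. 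None of these is covered by your version, so the rest of the paper would not go through with your proof.

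The missing idea is a correction term that converts a unit of $S$ into a unit of $R$. If $u\in U(S)$ with inverse $u^{-1}\in S$, then since $1_S$ is idempotent and acts as the identity on $S$ one checks
$$[u+(1_R-1_S)][u^{-1}+(1_R-1_S)]=uu^{-1}+(1_R-1_S)=1_R,$$
so $u+(1_R-1_S)\in U(R)$. Moreover the cross terms vanish in powers as well, giving $[u+(1_R-1_S)]^n=u^n+(1_R-1_S)$. Applying the $n$-UU hypothesis to this unit of $R$ yields $u^n+(1_R-1_S)=1_R+q$ with $q\in{\rm Nil}(R)$, hence $u^n=1_S+q$ and $q=u^n-1_S\in S$, so $q\in{\rm Nil}(S)$. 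This is exactly the paper's argument; once you adopt it, your reduction of part (b) to part (a) (as the paper does) or your direct componentwise argument both become legitimate.
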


\begin{proof}
(a) Let $u \in U(S)$. Then, $u+(1_R - 1_S) \in U(R)$ as $$[u+(1_R - 1_S)][u^{-1}+(1_R - 1_S)]=1_R.$$
On the other hand, since $R$ is an $n$-UU ring, we write $$u^n+(1_R - 1_S)=[u+(1_R - 1_S)]^n= 1_R +q,$$ where $q \in {\rm Nil}(R)$. Therefore, $u^n= 1_S +q$ and note that $q \in S$, because $q=u^n-1_S \in S$.\\

(b) The necessity follows directly from (a).

For sufficiency, if all $R_i$ are $n$-UU rings and $(u_1, u_2,\ldots,u_m) \in U(R)$, then it follows coordinate-wise that each $u_i \in U(R_i)$. Therefore, $\mathrm{u}_{i}^{n} =1+q_i$, where $q_i \in {\rm Nil}(R_i)$. Assume that $\mathrm{q}_{i}^{n_i}=0$ and $k:={\rm max}\{n_i\}\mathrm{}_{i=1}^{m}$. Consequently,
$$(u_1, \ldots,u_m)^n=(1,\ldots,1)+(q_1,q_2,\ldots,q_m)$$ such that $(q_1,q_2,\ldots,q_m)^k=0$ for some $k\in \mathbb{N}$, as required.
\end{proof}

As an intriguing consequence, we obtain:

\begin{corollary} \label{corner}
If $R$ is an $n$-UU ring for some $n \in \mathbb{N}$, then the corner ring $eRe$ is also $n$-UU for any $e=e^2 \in R$. In addition, if $e$ is a central idempotent, $R$ is an $n$-UU ring, provided both $eRe$ and $(1-e)R(1-e)$ are so.
\end{corollary}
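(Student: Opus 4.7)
The plan is to reduce both statements to the preceding Lemma~\ref{sub}, exploiting the fact that a corner $eRe$ is a subring of $R$ in the generalized sense (with identity $e$, which is an idempotent of $R$), and that a central idempotent yields a Peirce product decomposition.

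For the first assertion, I would simply view $eRe$ as a ``subring'' of $R$ with identity $1_{eRe} = e$, and invoke Lemma~\ref{sub}(a). To double-check that the proof there applies: given $u \in U(eRe)$ with two-sided inverse $v \in eRe$ satisfying $uv = vu = e$, the element $u + (1 - e)$ is a unit of $R$ with inverse $v + (1 - e)$, since $u$ and $1-e$ annihilate each other from both sides. The $n$-UU hypothesis on $R$ gives $(u + (1-e))^n = 1 + q$ with $q \in \mathrm{Nil}(R)$. Because $u$ and $1-e$ are orthogonal, the binomial expansion collapses to $u^n + (1-e)$, so $u^n = e + q$ with $q = u^n - e \in eRe$; the nilpotency of $q$ is inherited in the corner.

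For the second assertion, I would use the standard Peirce decomposition: when $e$ is central, the map $r \mapsto (ere,\, (1-e)r(1-e))$ is a ring isomorphism $R \xrightarrow{\sim} eRe \times (1-e)R(1-e)$, because $e$ and $1-e$ are central orthogonal idempotents summing to $1$. Assuming both $eRe$ and $(1-e)R(1-e)$ are $n$-UU, Lemma~\ref{sub}(b) applied to this direct product immediately yields that $R$ is $n$-UU.

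There is no real obstacle: both halves are routine once Lemma~\ref{sub} is in hand. The only mildly delicate point is to explicitly note that $eRe$ qualifies as a subring in the sense used in Lemma~\ref{sub}(a), since its identity $e$ differs from $1_R$ when $e \neq 1$; this is precisely the setting the $u + (1_R - 1_S)$ trick there was designed to accommodate. So the write-up can essentially be a one-line invocation of part~(a) followed by a one-line invocation of part~(b) applied to the Peirce decomposition.
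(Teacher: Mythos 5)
Your proposal is correct and follows essentially the same route as the paper: the first assertion is the paper's one-line invocation of Lemma~\ref{sub}(a) applied to the (non-unital-in-$R$) subring $eRe$, and the second is the Peirce decomposition $R \cong eRe \times (1-e)R(1-e)$ combined with Lemma~\ref{sub}(b). Your explicit verification that the $u + (1_R - 1_S)$ trick in the proof of Lemma~\ref{sub}(a) really does accommodate $1_{eRe} = e \neq 1_R$ is a detail the paper leaves implicit, but it is the same argument.
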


\begin{proof}
Since $eRe$ is a subring of $R$ (certainly, {\it not} necessarily unital), Lemma~\ref{sub}(a) is applicable to get the claim.

If now $e$ is a non-trivial central idempotent, we have the decomposition $R=eRe \oplus (1-e)R(1-e)$, so that Lemma~ \ref{sub}(b) applies to get the desired claim that $eRe$ is an $n$-UU ring.
\end{proof}

To treat the general situation, one may also ask the question: Does it follow that $R$ is an $n$-UU ring, provided both $eRe$ and $(1-e)R(1-e)$ are $n$-UU rings?

\medskip

We note that the infinite products of $n$-UU rings may {\it not} be an $n$-UU ring. For instance, letting $R =\prod_{k=1}^{\infty} \mathbb{Z}_{2^k}$, then $R$ is not an $n$-UU ring, because $a = (1, 3,..., 3,...) \in U(R)$, but the degree $a^n$ is {\it not} an unipotent. However, each $\mathbb{Z}_{2^k}$ is an $n$-UU ring for all $k \in \mathbb{N}$ (note that the ring $\mathbb{Z}_{2^k}$ is even strongly nil-clean).

\medskip

Consulting with Theorem \ref{theorem1}, a ring $R$ is strongly $n$-nil-clean if, and only if, it is both strongly $\pi$-regular and $(n-1)$-UU. In \cite{kocsan2016nil}, the authors provides several results concerning strongly $n$-nil-clean rings for various numbers $n$. In the following result, we examine semi-local $(n-1)$-UU rings.

\begin{theorem}\label{theorem2}
For a ring $R$, consider the following three conditions:
     \begin{enumerate}
         \item $R$ is an $(n-1)$-UU ring.
         \item $R$ is strongly $n$-nil-clean.
         \item $J(R)$ is nil and $R/J(R)$ is a subdirect product of the rings ${\rm M}_m(F)$, where $F$ is a finite field and $(|F|^i -1) | (n -1)$ for every $1 \le i \le m$.
     \end{enumerate}
Then, $(iii)$ $\Rightarrow$ $(ii)$$\Rightarrow$ $(i)$. The converse is also true, provided $R$ is semi-local and $n$ is either an even number or is of the form $n = 2^k + 1$, $n = 7$, or $n = 11$.
\end{theorem}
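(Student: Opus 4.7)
The plan is to handle the three implications in turn, with the forward ones holding unconditionally and the converse relying on both semi-locality and the arithmetic hypotheses on $n$.

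First, (ii)$\Rightarrow$(i) is immediate from Theorem~\ref{theorem1}, whose equivalence (1)$\Leftrightarrow$(6) shows that every strongly $n$-nil-clean ring is in particular $(n-1)$-UU.

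For (iii)$\Rightarrow$(ii), I would reduce the problem to $R/J(R)$ via Theorem~\ref{theorem1}(4): strong $n$-nil-cleanness of $R$ is equivalent to $a-a^n\in{\rm Nil}(R)$ for every $a\in R$. Since $J(R)$ is nil, nilpotency modulo $J(R)$ lifts to nilpotency in $R$, so it suffices to verify this condition inside the subdirect product $R/J(R)\hookrightarrow\prod_i {\rm M}_{m_i}(F_i)$. The hypothesis $(|F_i|^j-1)\mid(n-1)$ for $1\le j\le m_i$ is precisely $[|F_i|-1,\ldots,|F_i|^{m_i}-1]\mid(n-1)$, so Corollary~\ref{matrix n-UU} makes each factor ${\rm M}_{m_i}(F_i)$ strongly $n$-nil-clean; consequently $a_i-a_i^n$ is nilpotent in each factor, and coordinate-wise assembly produces the required nilpotency of $a-a^n$ in $R/J(R)$.

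For the converse, assume $R$ is semi-local, $(n-1)$-UU, and that $n$ satisfies one of the listed arithmetic conditions. Since units lift from $R/J(R)$ to $R$, the quotient $R/J(R)$ is itself $(n-1)$-UU; Wedderburn--Artin then gives $R/J(R)\cong\prod_{i=1}^{t}{\rm M}_{m_i}(D_i)$ with division rings $D_i$, and by Lemma~\ref{sub}(b) each direct factor ${\rm M}_{m_i}(D_i)$ is $(n-1)$-UU. Embedding $D_i$ diagonally and using Lemma~\ref{sub}(a) again makes each $D_i$ an $(n-1)$-UU division ring, so Lemma~\ref{division ring} forces $D_i$ to be a finite field $F_i$, and Corollary~\ref{matrix n-UU} yields the required divisibility $(|F_i|^j-1)\mid(n-1)$ for all $1\le j\le m_i$.

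The remaining, and principal, task is to show that $J(R)$ is nil; this is exactly where the arithmetic hypothesis on $n$ is spent. If $n$ is even, then $n-1$ is odd and Proposition~\ref{prop 1.3} applies directly. If $n=2^k+1$, then $n-1=2^k$ and the $2^k$-UU proposition preceding Corollary~\ref{prop 1.4} delivers the conclusion. The cases $n=7$ and $n=11$ correspond respectively to $n-1=6$ and $n-1=10$, covered by Propositions~\ref{prop 1.6} and~\ref{prop 1.7}. The main obstacle is the absence of a unified treatment of $J(R)$ nil for arbitrary $(n-1)$-UU rings: each new exponent demands its own combinatorial identity in $(1\pm a)^{n-1}$ to extract nilpotency of $2a$ and of the odd prime factors dividing $n-1$, and it is precisely this combinatorial bottleneck that restricts the converse to the listed values of $n$.
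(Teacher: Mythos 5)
Your proposal is correct in substance, and its converse direction follows the same route as the paper: lift $(n-1)$-UU to $R/J(R)$, apply Wedderburn--Artin, pass to the division rings $D_i$ via Lemma~\ref{sub}, force them to be finite fields by Lemma~\ref{division ring}, and conclude with Corollary~\ref{matrix n-UU}. In fact you are more careful than the printed proof on the crucial point of why $J(R)$ is nil: the paper cites only Proposition~\ref{prop 1.3}, which covers just the case where $n-1$ is odd (i.e., $n$ even), whereas you correctly dispatch $n=2^k+1$, $n=7$ and $n=11$ to the $2^k$-UU proposition, Proposition~\ref{prop 1.6} and Proposition~\ref{prop 1.7} respectively --- exactly the case analysis the statement's hypotheses are designed for. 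You also differ from the paper on (ii)$\Leftrightarrow$(iii): the paper simply quotes Theorem~8 of Abyzov--Tapkin, while you give a self-contained proof of (iii)$\Rightarrow$(ii) via Theorem~\ref{theorem1}(4) and Corollary~\ref{matrix n-UU}, which makes the argument independent of that external reference (note the paper's citation gives the full equivalence (ii)$\Leftrightarrow$(iii), but your one-sided version is all the theorem's statement requires). One step in your direct argument needs an extra line: in a subdirect product with possibly infinitely many factors, coordinate-wise nilpotency of $a-a^n$ does not by itself give nilpotency --- you need a uniform bound on the nilpotency indices. This bound is available here: $(|F|^{m}-1)\mid(n-1)$ forces $|F|^{m}\le n$, hence $m\le \log_2 n$, and any nilpotent of ${\rm M}_{m}(F)$ has index at most $m$; so $(a-a^n)^{\lfloor \log_2 n\rfloor}=0$ holds in every factor simultaneously. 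With that observation inserted, your proof is complete.
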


\begin{proof}
(ii) $\Leftrightarrow$ (iii). The equivalence is exactly \cite[Theorem 8]{abyzov2022rings}.

The implication (ii) $\Rightarrow$ (i) is clear. \\

(i) $\Rightarrow$ (iii).  Exploiting Proposition \ref{prop 1.3}, $J(R)$ is nil. As $R$ is semi-local, we may write that $$R/J(R) = {\rm M}_{n_1}(D_1) \oplus {\rm M}_{n_2}(D_2) \oplus \cdots \oplus {\rm M}_{n_k}(D_k),$$ where each $D_i$ is a division ring. Since the quotient-ring $R/J(R)$ is an $(n-1)$-UU ring, each of the direct components ${\rm M}_{n_i}(D_i)$ is also $(n-1)$-UU. And since we have the inclusions $D_i \subseteq {\rm M}_{n_i}(D_i)$, all of the $D_i$'s must also be $(n-1)$-UU rings owing to Lemma~\ref{sub} (a). Hence, Proposition \ref{division ring} allows us to conclude that all of the $D_i$'s must be finite fields. Thus, Corollary \ref{matrix n-UU} is applicable to get the result.
\end{proof}

We know that matrices over any ring are neither UU nor $2$-UU (see, e.g., \cite{danchev2016rings} and \cite{cui2020some}). So, a question which automatically arises is what can we say about matrices over rings for an arbitrary natural number $n>2$? In what follows, we try to answer this question in some different cases.

\begin{example} \label{ex matrix 3-UU}
For any ring $R\neq 0$ and any integer $n\ge 3$, ${\rm M}_n(R)$ is {\it not} a $3$-UU ring.
\end{example}

\begin{proof}
Based on Corollary \ref{corner}, it is sufficient to show that ${\rm M}_3(R)$ is not a $3$-UU ring. Assume, in a way of contradiction, that ${\rm M}_3(R)$ is a $3$-UU ring. Since
    $$\begin{pmatrix}
        1 & 1 & 1\\
        1 & 1 & 0\\
        1 & 0 & 0
    \end{pmatrix}^{-1}=
    \begin{pmatrix}
        0 & 0 & 1\\
        0 & 1 & -1\\
        1 & -1 & 0
    \end{pmatrix} \in {\rm GL}_3(R),
    $$
we have
$$
\begin{pmatrix}
        1 & 1 & 1\\
        1 & 1 & 0\\
        1 & 0 & 0
    \end{pmatrix}^{3}-
\begin{pmatrix}
        1 & 0 & 0\\
        0 & 1 & 0\\
        0 & 0 & 1
    \end{pmatrix}=
\begin{pmatrix}
        5 & 5 & 3\\
        5 & 3 & 2\\
        3 & 2 & 0
    \end{pmatrix} \in {\rm Nil}({\rm M}_3(R)).
$$
But, by Proposition \ref{2 in J(R)}, $2 \in {\rm Nil}(R)$, whence one calculates that
$$\begin{pmatrix}
        5 & 5 & 3\\
        5 & 3 & 2\\
        3 & 2 & 0
    \end{pmatrix}^{-1}=\frac{1}{13}\begin{pmatrix}
        -4 & 6 & 1\\
        6 & -9 & 5\\
        1 & 5 & -10
    \end{pmatrix},$$
which is a contradiction. This completes the proof.
\end{proof}

In the above example, we cannot assume $n \ge 2$; indeed, this is because, according to Example \ref{n mod 3}, the ring ${\rm M}_2(\mathbb{Z}_2)$ is $3$-UU.

\begin{example} \label{ex matrix 4-UU}
For any ring $R\neq 0$ and any integer $n\ge 2$, ${\rm M}_n(R)$ is {\it not} a $4$-UU ring.
\end{example}

\begin{proof}
It suffices to show that ${\rm M}_2(R)$ is not a $4$-UU ring. Suppose
$A=
    \begin{pmatrix}
        1 & 1 \\
        1 & 0
    \end{pmatrix},$
and $S = 1_R\cdot \mathbb{Z}$. Apparently, $S$ is a commutative subring of $R$ and $A \in {\rm GL}_2(R)$. Then, one computes that ${\rm det}(A^4-{\rm I})=5 \in {\rm Nil}(S)$, so $4\in U(R)$. Now, supposing
$B=\begin{pmatrix}
        2 & 2 \\
        2 & -2
    \end{pmatrix}$,
we have $B^{-1}=\frac{1}{4} \begin{pmatrix}
        1 & 1 \\
        1 & -1
    \end{pmatrix}.$
Therefore, plain calculations lead us to ${\rm det}(B^4-{\rm I})=3^2.7 \in {\rm Nil}(S)$, but $(5, 3^2.7)=1$ and hence $1_R \in {\rm Nil}(R)$, a contradiction. This finishes the proof.
\end{proof}

\begin{example} \label{ex matrix 5-UU}
For any ring $R\neq 0$ and any integer $n\ge 2$, ${\rm M}_n(R)$ is {\it not} a $5$-UU ring.
\end{example}

\begin{proof}
Assume the contrary that ${\rm M}_2(R)$ is a $5$-UU ring. Since
$$\begin{pmatrix}
        1 & 1 \\
        1 & 0
    \end{pmatrix} \in {\rm GL}_2(R),
    $$
we extract
$$
\begin{pmatrix}
        1 & 1 \\
        1 & 0
    \end{pmatrix}^{5}-
\begin{pmatrix}
        1 & 0 \\
        0 & 1
    \end{pmatrix}=
\begin{pmatrix}
        7 & 5\\
        3 & 2
    \end{pmatrix} \in {\rm Nil}({\rm M}_2(R)).
$$
But, by Proposition \ref{2 in J(R)}, $2 \in {\rm Nil}(R)$, so one verifies that
$$\begin{pmatrix}
        7 & 5\\
        3 & 2
    \end{pmatrix}^{-1}=\frac{1}{11}\begin{pmatrix}
        -2 & 5\\
         5 & -7
    \end{pmatrix},$$
a contradiction. This concludes the proof.
\end{proof}

\begin{example} \label{ex matrix 6-UU}
For any ring $R\neq 0$ and any integer $n\ge 3$, ${\rm M}_n(R)$ is {\it not} a $6$-UU ring.
\end{example}

\begin{proof}
Suppose ${\rm M}_3(R)$ is a $6$-UU ring. Then, thanks to Corollary \ref{corner}, we have that ${\rm M}_2(R)$ is a $6$-UU ring. With the settings
    $A=\begin{pmatrix}
        1 & 1\\
        1 & 0
    \end{pmatrix}$
and $S=1_R\cdot \mathbb{Z}$ at hand, one checks that $S$ is a commutative subring of $R$ and $A \in {\rm GL}_2(R)$. This gives that ${\rm det}(A^6-{\rm I})=2^4 \in {\rm Nil}(S)$. Therefore, $2 \in {\rm Nil}({\rm M}_3(R))$, and thus Proposition \ref{n odd two item} implies that ${\rm M}_3(R)$ is a $3$-UU ring. This, however, contradicts Example \ref{ex matrix 3-UU} and so our claim follows, as promised.
\end{proof}

Notice the interesting fact that, for $n=2,3,4,5,6$, it is easy to characterize semi-local $n$-UU rings by using Theorem \ref{theorem2} and the above examples.

\begin{lemma}\label{nil ideal}
Let $I$ be a nil-ideal of a ring $R$. Then, $R$ is $n$-UU if, and only if, so is $R/I$.
\end{lemma}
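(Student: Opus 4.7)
The plan is to exploit the standard fact that any nil ideal $I$ is contained in the Jacobson radical $J(R)$, so that units lift along the quotient map $\pi\colon R\to R/I$ and nilpotence transfers in both directions.

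For the implication $R$ is $n$-UU $\Rightarrow$ $R/I$ is $n$-UU, I would start with an arbitrary $\bar u\in U(R/I)$ and first lift it to a unit $u\in U(R)$. Here the only subtle point is precisely the unit-lifting: picking any preimage $u$ and any preimage $v$ of $\bar u^{-1}$, the products $uv$ and $vu$ are of the form $1+i$ with $i\in I$, hence units because $I$ is nil. This forces $u\in U(R)$. Then $u^n-1\in\mathrm{Nil}(R)$ by hypothesis, and applying $\pi$ gives $\bar u^n-\bar 1\in\mathrm{Nil}(R/I)$ immediately.

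For the converse, I would take $u\in U(R)$ arbitrarily; its image $\bar u$ is automatically in $U(R/I)$, so by hypothesis $\overline{u^n-1}=\bar u^n-\bar 1\in\mathrm{Nil}(R/I)$. Hence $(u^n-1)^k\in I$ for some $k\in\mathbb N$. Because $I$ itself is nil, the element $(u^n-1)^k$ is nilpotent in $R$, say $((u^n-1)^k)^m=0$, whence $u^n-1\in\mathrm{Nil}(R)$.

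I do not expect any real obstacle; the only step worth stating carefully is the unit-lifting in the forward direction, which is the reason we need $I$ to be a nil (equivalently, here, quasi-regular) ideal rather than just an arbitrary ideal. Everything else is a direct computation with the quotient map.
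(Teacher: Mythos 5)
Your proof is correct and takes essentially the same route as the paper's: lift units along $\pi\colon R\to R/I$ (valid since a nil ideal lies in $J(R)$), push nilpotents forward for one direction, and pull nilpotents back through the nil ideal $I$ for the converse. The only difference is that you spell out the unit-lifting step explicitly (via $uv=1+i$, $vu=1+j$ with $i,j\in I$ nilpotent), which the paper simply asserts.
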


\begin{proof}
Assume that $R$ is an $n$-UU ring. Let $\bar{u}:=u+I \in U(R/I)$. So, $u \in U(R)$, and hence we have $u^n=1+q$, where $q \in {\rm Nil}(R)$. Therefore, $\bar{u}^n=\bar{1}+\bar{q}$, as required.

Conversely, assume that $R/I$ is an $n$-UU ring. Let $u \in U(R)$. Then, $\bar{u} \in U(R/I)$. Thus, $$\overline{u^n-1}=\bar{u}^n-\bar{1} \in {\rm Nil}(R/I).$$ As $I$ is nil, we can get $u^n -1 \in {\rm Nil}(R)$ meaning that $R$ is $n$-UU, as needed.
\end{proof}

As an immediate consequence of the results established above, we extract:

\begin{corollary}\label{cor 1.10}
\begin{enumerate}
        \item Suppose $n$ is a positive odd integer. Then, $R$ is an $n$-UU ring if, and only if, $J(R)$ is nil and $R/J$ is $n$-UU.
        \item Let $n=2^k,6,10$. Then, $R$ is an $n$-UU ring if, and only if, $J(R)$ is nil and $R/J$ is $n$-UU.
\end{enumerate}
\end{corollary}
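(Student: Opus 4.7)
The plan is to assemble this corollary directly from two ingredients already established in the paper: the various ``$J(R)$ is nil'' propositions, one per admissible value of $n$, together with Lemma~\ref{nil ideal} which transfers the $n$-UU property across a nil ideal in both directions. There is essentially no new content to prove; the task is to match each case to the correct prior result and then apply the lemma.

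For the forward direction of both (1) and (2), suppose $R$ is $n$-UU. Then the nilness of $J(R)$ is obtained case-by-case from an already proved proposition: for $n$ odd this is exactly Proposition~\ref{prop 1.3}; for $n=2^k$ it is the Proposition immediately preceding Corollary~\ref{prop 1.4} (stated for $2^k$-UU rings); for $n=6$ it is Proposition~\ref{prop 1.6}; and for $n=10$ it is Proposition~\ref{prop 1.7}. Once $J(R)$ is known to be nil, applying Lemma~\ref{nil ideal} with the ideal $I=J(R)$ delivers the second half of the assertion, namely that $R/J(R)$ is $n$-UU.

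For the converse, assume $J(R)$ is nil and $R/J(R)$ is $n$-UU. Invoking Lemma~\ref{nil ideal} once more, with $I=J(R)$, we conclude in one step that $R$ itself is $n$-UU. This closes both (1) and (2).

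The main obstacle is conceptual rather than technical: it is the unavailability of a uniform ``$J(R)$ is nil'' theorem that would cover all $n$ at once, which is precisely why the corollary is stated only for the specific values $n$ odd and $n\in\{2^k,6,10\}$. No new calculation is required, as every ingredient has been set up earlier in the excerpt.
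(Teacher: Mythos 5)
Your proposal is correct and is exactly the argument the paper intends: the corollary is stated there as ``an immediate consequence of the results established above,'' meaning precisely the case-by-case nilness results (Proposition~\ref{prop 1.3} for odd $n$, the $2^k$-UU proposition preceding Corollary~\ref{prop 1.4}, and Propositions~\ref{prop 1.6} and~\ref{prop 1.7} for $n=6,10$) combined with the two-way transfer of Lemma~\ref{nil ideal} applied to $I=J(R)$. You have matched each value of $n$ to the correct prior result and used the lemma in both directions, so nothing is missing.
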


\subsection{Matrix rings and generalizations}

Let $A, B$ be two rings and let $M, N$ be the $(A, B)$-bimodule and $(B, A)$-bimodule, respectively. Also, we consider the bilinear maps $\phi : M\otimes_B N \to A$ and $\psi : N\otimes_AM \to B$ that apply to the following properties
$$Id_M \otimes_B \psi = \phi \otimes_A Id_M, \quad Id_N \otimes_A \phi = \psi \otimes_B Id_N .$$
For $m \in M$ and $n \in N$, we define $mn := \phi(m \otimes n)$ and $nm := \psi(n \otimes m)$.
Thus, the 4-tuple
$R= \begin{pmatrix}
A & M \\
N & B
\end{pmatrix}$
becomes to an associative ring equipped with the obvious matrix operations, which is called a {\it Morita context ring}. Denote the two-sided ideals $Im\phi$ and $Im\psi$ to $MN$ and $NM$, respectively, that are called the {\it trace ideals} of the Morita context (compare with \cite{ABD} as well).

\medskip

We are managed to prove the following assertion.

\begin{theorem}\label{thm 1.11}
Suppose $n$ is a positive integer such that either $n$ is odd or $n=2^k,6,10$. Let $R= \begin{pmatrix}
A & M \\
N & B
\end{pmatrix}$
be a Morita context ring such that $MN$ and $NM$ are nilpotent ideals of $A$ and $B$, respectively. Then, $R$ is $n$-UU if, and only if, both $A$ and $B$ are $n$-UU rings.
\end{theorem}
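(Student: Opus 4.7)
The plan is to reduce the problem to a direct product via a nilpotent ideal, after which the characterization falls out from Lemma~\ref{nil ideal} and Lemma~\ref{sub}(b). Concretely, I would introduce the set
$$K \;:=\; \begin{pmatrix} MN & M \\ N & NM \end{pmatrix} \;\subseteq\; R$$
and first verify that $K$ is a two-sided ideal of $R$ by direct block-matrix computation. This uses only that $MN$ is an ideal of $A$, $NM$ is an ideal of $B$, and the Morita-context associativity identities $(MN)M = M(NM)$ and $(NM)N = N(MN)$ that follow from the compatibility of $\phi$ and $\psi$.

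The technical heart of the argument is to show $K$ is nilpotent. First observe that $(MN)^a = 0$ forces $(NM)^{a+1} = N(MN)^a M = 0$, so we may pick a common exponent $k$ with $(MN)^k = (NM)^k = 0$. Expanding $K^{2k+1}$ entrywise, every summand is a product of $2k+1$ factors, each drawn from one of the blocks $MN$, $M$, $N$, $NM$ in a pattern dictated by a walk on the two ``vertices'' $A, B$. Unfolding the diagonal blocks $MN$ and $NM$ into letters $M\!\cdot\! N$ and $N\!\cdot\! M$, each summand becomes an alternating word in the letters $M$ and $N$ of total length at least $2k+1$. The $(1,1)$- and $(2,2)$-entries are of the form $(MN)^j$ and $(NM)^j$ with $j \geq k$; the $(1,2)$- and $(2,1)$-entries are of the form $(MN)^j M$ and $(NM)^j N$ with $j \geq k$. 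In each case a factor of $(MN)^k$ or $(NM)^k$ appears and forces the product to vanish, whence $K^{2k+1} = 0$.

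Next, one notes the canonical ring isomorphism $R/K \cong (A/MN) \times (B/NM)$ (quotienting by $K$ simultaneously kills the off-diagonal blocks and replaces the diagonal by the appropriate quotients). With this in hand the equivalence is a chain of routine reductions. Since $K$, $MN$, $NM$ are nilpotent hence nil, Lemma~\ref{nil ideal} gives that $R$ is $n$-UU iff $R/K$ is, that $A$ is $n$-UU iff $A/MN$ is, and that $B$ is $n$-UU iff $B/NM$ is; and Lemma~\ref{sub}(b) says the product $(A/MN)\times (B/NM)$ is $n$-UU iff each factor is. Concatenating these equivalences yields the desired biconditional
$$R \text{ is } n\text{-UU} \iff A/MN \text{ and } B/NM \text{ are both } n\text{-UU} \iff A \text{ and } B \text{ are both } n\text{-UU}.$$

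The main obstacle is the combinatorial bookkeeping in the second step, i.e.\ keeping track of which power of $MN$ or $NM$ contains each summand of $K^{2k+1}$; everything else is transfer through a nil ideal. Notably, this route uses only Lemma~\ref{nil ideal}, which is valid for \emph{every} $n \geq 2$, so the hypothesis restricting $n$ to be odd, $2^k$, $6$, or $10$ is not actually consumed by the argument and is presumably included only to align the result with the framework of Corollary~\ref{cor 1.10}.
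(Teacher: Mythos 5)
Your proof is correct, and it takes a genuinely different --- and in fact stronger --- route than the paper's. The paper proves the nontrivial direction through the Jacobson radical: it first applies Corollary~\ref{cor 1.10} to $A$ and $B$ (this is exactly where the hypothesis that $n$ is odd or $n=2^k,6,10$ is consumed, since only for such $n$ is it known that $n$-UU implies nil radical), then cites from the literature the description of $J(R)$ for a Morita context ring, namely the block matrix with diagonal blocks $J(A),J(B)$ and off-diagonal blocks $M,N$, together with $R/J(R)\cong A/J(A)\times B/J(B)$, and finally shows $J(R)$ is nil by checking that a suitable power of any element of $J(R)$ lies in your set $K$ and that $K^{2k}$ has entries $(MN)^k$, $(MN)^kM$, $(NM)^kN$, $(NM)^k$. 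So the combinatorial core --- nilpotence of $K$ --- appears in both arguments, but the paper uses $K$ only as a container sitting inside the nil radical, whereas you promote $K$ to a two-sided ideal, identify $R/K\cong(A/MN)\times(B/NM)$ as the quotient by the kernel of the evident surjection, and transfer everything through Lemma~\ref{nil ideal} and Lemma~\ref{sub}(b). Since those two lemmas and the nilpotence of $K$ hold for every $n$, your closing observation is exactly right: your argument proves the theorem for arbitrary $n\ge 1$, so the restriction on $n$ in the statement is an artifact of the paper's reliance on Corollary~\ref{cor 1.10} and can be dropped. What the paper's route buys is only the reuse of existing machinery (the known formula for $J(R)$ and Corollary~\ref{cor 1.10}); your route is self-contained, avoids the radical entirely, and yields a cleaner and more general result.
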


\begin{proof}
As both $A$ and $B$ are subrings of $R$, in accordance with Lemma~\ref{sub} (a) they are $n$-UU ring.\\

If now $A,B$ are two $n$-UU rings, then by virtue of Corollary \ref{cor 1.10} we have that $J(A)$ and $J(B)$ are both nil-ideals and also that $A/J(A)$ and $B/J(B)$ are both $n$-UU rings. On the other side, referring to \cite[Theorem 13(1)]{danchev2017exchange}, we compute
$$J(R)=
\begin{pmatrix}
J(A) & M \\
N & J(B)
\end{pmatrix},$$ and
$$R/J(R) \cong A/J(A)\times B/J(B).$$
Since Lemma~\ref{sub} (b) assures that the direct product of $n$-UU rings is again $n$-UU, the ring $R/J(R)$ is also $n$-UU. Furthermore, according to Corollary \ref{cor 1.10}, it is sufficient to show that $J(R)$ is a nil-ideal. To that purpose, suppose
$$S= \begin{pmatrix}
a_1 & m \\
n & a_2
\end{pmatrix} \in J(R)$$
such that $a_1^n=a_2^n=0$ for some $n \in \mathbb{N}$. Then, one finds that
$$S^n \in
\begin{pmatrix}
 MN& M \\
N & NM
\end{pmatrix},
$$
and hence, we easily obtain by induction on $k$ that
$$\begin{pmatrix}
 MN& M \\
N & NM
\end{pmatrix}^{2k}=
\begin{pmatrix}
 (MN)^k& (MN)^kM \\
(NM)^kN & (NM)^k
\end{pmatrix}.
$$
Therefore, $J(R)$ is nil, as asserted.
\end{proof}

Given a ring $R$ and a central element $s$ of $R$, the $4$-tuple
$\begin{pmatrix}
 R& R \\
 R& R
\end{pmatrix}$
becomes a ring with addition defined componentwise and with multiplication defined by
$$
\begin{pmatrix}
 a_1& x_1 \\
 y_1& b_1
\end{pmatrix}
\begin{pmatrix}
  a_2& x_2 \\
 y_2& b_2
\end{pmatrix}=
\begin{pmatrix}
 a_1a_2 + sx_1y_2& a_1x_2 + x_1b_2 \\
 y_1a_2 + b_1y_2& sy_1x_2 + b_1b_2
\end{pmatrix}.
$$
This ring is denoted by $K_s(R)$. A Morita context
$
\begin{pmatrix}
  A& M \\
 N& B
\end{pmatrix}
$
with $A = B = M = N = R$ is called a generalized matrix ring over $R$. It was observed in \cite{krylov2008isomorphism} that a ring $S$ is a generalized matrix ring over $R$ if, and only if, $S = K_s(R)$ for some $s \in C(R)$, the center of $R$. Here $MN = NM = sR$, so that $MN \subseteq J(A) \Longleftrightarrow  s \in J(R)$, $NM \subseteq J(B) \Longleftrightarrow  s \in  J(R)$, and $MN, NM$ are nilpotents $\Longleftrightarrow  s$ is a nilpotent.

\medskip

Thus, Theorem \ref{thm 1.11} yields the following two direct consequences.

\begin{corollary}
Suppose $n$ is a positive integer such that either $n$ is odd or $n=2^k,6,10$, R is a ring, and $s \in C(R)\cap {\rm Nil}(R$). Then the formal matrix ring $K_s(R)$ is an $n$-UU ring if, and only if, $R$ is $n$-UU ring.
\end{corollary}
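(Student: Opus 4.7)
The plan is to recognize $K_s(R)$ as a Morita context ring in the exact sense of the paper and then to invoke Theorem~\ref{thm 1.11} directly. To that end, I would set $A = B = M = N = R$ and take both bilinear maps $\phi \colon M \otimes_B N \to A$ and $\psi \colon N \otimes_A M \to B$ to be multiplication by the central element $s$, that is, $\phi(m \otimes n) = smn$ and $\psi(n \otimes m) = snm$. Because $s$ lies in the center of $R$, the two compatibility identities $\mathrm{Id}_M \otimes_B \psi = \phi \otimes_A \mathrm{Id}_M$ and $\mathrm{Id}_N \otimes_A \phi = \psi \otimes_B \mathrm{Id}_N$ reduce to trivial associativity statements, and the induced matrix multiplication on the $4$-tuple reproduces precisely the multiplication rule that defines $K_s(R)$.

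Under this identification the trace ideals are $MN = NM = sR$. Since $s$ is central and nilpotent, say $s^k = 0$, one has $(sR)^k = s^k R = 0$, so $sR$ is a nilpotent two-sided ideal of $R$ on both sides. This shows that the Morita-context hypothesis of Theorem~\ref{thm 1.11} (nilpotency of the trace ideals $MN$ and $NM$) is satisfied under exactly the condition $s \in C(R) \cap \mathrm{Nil}(R)$.

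Applying Theorem~\ref{thm 1.11} then yields the equivalence that $K_s(R)$ is an $n$-UU ring if and only if both $A = R$ and $B = R$ are $n$-UU, which is exactly the statement that $R$ is $n$-UU. The only mildly delicate step is the bookkeeping of recognizing $K_s(R)$ as a Morita context with the stated bilinear forms; once this is in place, the corollary is nothing more than a direct specialization of Theorem~\ref{thm 1.11}, so I do not anticipate a genuine obstacle. In particular, the hypothesis that $n$ is odd or of the form $2^k,6,10$ is used only implicitly, through its appearance in Theorem~\ref{thm 1.11}.
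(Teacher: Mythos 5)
Your proposal is correct and follows exactly the paper's own route: the paper likewise identifies $K_s(R)$ as the Morita context with $A=B=M=N=R$ and trace ideals $MN=NM=sR$, notes that these are nilpotent precisely when $s\in{\rm Nil}(R)$, and then invokes Theorem~\ref{thm 1.11}. Your added verification that $(sR)^k=s^kR=0$ and that centrality of $s$ makes the compatibility identities hold is just the bookkeeping the paper leaves implicit.
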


\begin{corollary}
Suppose $n$ is a positive integer such that $n$ is odd, then the formal matrix ring $K_2(R)$ is an $n$-UU ring if, and only if, $R$ is $n$-UU ring.
\end{corollary}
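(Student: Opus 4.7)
The plan is to recognize this corollary as an immediate specialization of the immediately preceding one; the only content beyond what was just proved is that, when $n$ is odd, the hypothesis ``$s=2\in\mathrm{Nil}(R)$'' required there becomes automatic rather than an extra assumption on $R$.

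First I would dispatch the ``only if'' direction. Assuming $K_{2}(R)$ is $n$-UU, take the idempotent $e=\begin{pmatrix}1&0\\0&0\end{pmatrix}\in K_{2}(R)$ and check directly from the multiplication rule of $K_{2}(R)$ that $eK_{2}(R)e$ consists precisely of the matrices $\begin{pmatrix}a&0\\0&0\end{pmatrix}$ with $a\in R$, so $eK_{2}(R)e\cong R$ as rings. By Corollary~\ref{corner} the corner ring $eK_{2}(R)e$ is $n$-UU, hence so is $R$. (Alternatively, one could embed $R$ into $K_{2}(R)$ diagonally as a unital subring and apply Lemma~\ref{sub}(a).) Note that this direction makes no use of the parity of $n$.

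Next I would handle the ``if'' direction, where the odd-$n$ hypothesis is precisely what is needed. Suppose $R$ is $n$-UU. Since $n$ is odd, Proposition~\ref{2 in J(R)} tells us that $2$ is central nilpotent in $R$, so $2\in C(R)\cap\mathrm{Nil}(R)$. This verifies the hypothesis of the preceding corollary with $s=2$, and that corollary then yields at once that $K_{2}(R)$ is $n$-UU.

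The main ``obstacle''---such as it is---amounts to the observation that Proposition~\ref{2 in J(R)} upgrades the assumption on $R$ to nilpotency of the twist $s=2$, after which the previous corollary does all the work. There are no further calculations to perform, and in particular no need to reinvoke the trace-ideal nilpotency computation from Theorem~\ref{thm 1.11}; it is already packaged inside the corollary we are specializing.
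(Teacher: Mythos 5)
Your proposal is correct and matches the paper's (implicit) argument: the corollary is stated as a direct specialization of the preceding one with $s=2$, where Proposition~\ref{2 in J(R)} supplies $2\in C(R)\cap{\rm Nil}(R)$ for odd $n$, and the ``only if'' direction is exactly the subring/corner reduction (Lemma~\ref{sub}(a), Corollary~\ref{corner}) already used in Theorem~\ref{thm 1.11}. Nothing is missing.
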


Let $R, S$ be two rings, and let $M$ be an $(R, S)$-bimodule such that the operation $(rm)s = r(ms$) is valid for all $r \in R$, $m \in M$ and $s \in S$. Given such a bimodule $M$, we can put

$$
T(R, S, M) =
\begin{pmatrix}
 R& M \\
 0& S
\end{pmatrix}
=
\left\{
\begin{pmatrix}
 r& m \\
 0& s
\end{pmatrix}
: r \in R, m \in M, s \in S
\right\},
$$
where it forms a ring with the usual matrix operations. The so-stated formal matrix $T(R, S, M)$ is called a {\it formal triangular matrix ring}.

\medskip

The following result can quickly be verified by using Theorem \ref{thm 1.11}.

\begin{corollary}
Suppose $n$ is a positive integer such that either $n$ is odd or $n=2^k,6,10$. Let $R$, $S$ be two rings and let $M$ be an $(R, S)$-bimodule, and $N$ a bimodule over $R$.
\begin{enumerate}
    \item $T(R, S, M)$ is an $n$-UU ring if, and only if, $R$ and $S$ are $n$-UU rings.\\
    \item The trivial extension $R \propto N$ is an $n$-UU ring if, and only if, $R$ is an $n$-UU ring.\\
    \item For $n \ge 2$, ${\rm T}_n(R)$ is an $n$-UU ring if, and only if, $R$ is $n$-UU.\\
    \item For $n \ge 2$, $R[x]/(x^n)$ is an $n$-UU ring if, and only if, $R$ is an $n$-UU ring.
\end{enumerate}
\end{corollary}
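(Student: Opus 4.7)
\medskip

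\noindent\textbf{Proof plan.} All four items reduce either to a direct application of Theorem~\ref{thm 1.11} (once the construction is exhibited as a Morita context whose trace ideals are trivially zero) or to the nil-ideal Lemma~\ref{nil ideal} combined with Lemma~\ref{sub}(b). Throughout I will write $m$ for the matrix-size/truncation parameter in items (iii) and (iv), so as to avoid the notational collision with the fixed UU exponent $n$ from the hypothesis.

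For item (i), the ring $T(R,S,M)$ is literally the Morita context $\begin{pmatrix} R & M \\ 0 & S \end{pmatrix}$, so both trace ideals $MN$ and $NM$ equal $0$ and are in particular nilpotent; Theorem~\ref{thm 1.11} then yields the equivalence at once. For item (ii), I would observe that the subset $I := \{(0,x) : x \in N\}$ of $R \propto N$ is an ideal with $I^{2}=0$, and that the quotient $(R \propto N)/I$ is isomorphic to $R$; Lemma~\ref{nil ideal} finishes the argument. (Note that $R \propto N$ is not itself in the Morita-context shape required by Theorem~\ref{thm 1.11}, because its two diagonal copies of $R$ are tied together, which is why I prefer the nil-ideal route here.)

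For item (iii), I would induct on $m \ge 2$. The base case $m=2$ is item (i) applied to $T(R,R,R)$. For the inductive step, write
\[
{\rm T}_{m}(R) \;\cong\; T\bigl({\rm T}_{m-1}(R),\,R,\,R^{\,m-1}\bigr),
\]
where the bimodule $R^{\,m-1}$ plays the role of the last column (minus its bottom entry), viewed as a $({\rm T}_{m-1}(R),R)$-bimodule. Item (i) then reduces the $n$-UU property of ${\rm T}_{m}(R)$ to that of ${\rm T}_{m-1}(R)$ together with $R$, and the inductive hypothesis closes the loop. For item (iv), the ideal $(x)/(x^{m}) \subseteq R[x]/(x^{m})$ is nilpotent of index $m$, with quotient isomorphic to $R$, so Lemma~\ref{nil ideal} again applies directly.

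I do not foresee a genuine obstacle: once the correct nilpotent ideal or Morita decomposition is identified in each case, the arguments are routine consequences of the tools already established. The only point calling for a little care is the notational clash between the size/truncation parameter and the UU exponent $n$ in items (iii) and (iv), which is resolved simply by renaming the former to $m$.
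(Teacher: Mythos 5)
Your proposal is correct. The paper itself gives no detailed argument for this corollary --- it only remarks that the result ``can quickly be verified by using Theorem~\ref{thm 1.11}'' --- and your treatment of items (i) and (iii) is exactly that intended route: $T(R,S,M)$ is the Morita context with $N=0$, hence with zero (in particular nilpotent) trace ideals, and the decomposition ${\rm T}_{m}(R)\cong T\bigl({\rm T}_{m-1}(R),R,R^{m-1}\bigr)$ sets up the induction cleanly. Where you depart from the paper's one-line hint is in items (ii) and (iv), and your instinct there is sound: neither $R\propto N$ nor $R[x]/(x^{m})$ is literally a Morita context of the required shape (the two diagonal entries are tied to the same ring element), so Theorem~\ref{thm 1.11} does not apply verbatim, and routing through Lemma~\ref{nil ideal} with the square-zero ideal $0\propto N$, respectively the nilpotent ideal $(x)/(x^{m})$, is the correct fix. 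A bonus worth recording: your Lemma~\ref{nil ideal} arguments never invoke the hypothesis that $n$ is odd or $n=2^{k},6,10$, so items (ii) and (iv) in fact hold for \emph{every} $n\ge 1$; the same is true of items (i) and (iii), since $\left(\begin{smallmatrix} 0 & M\\ 0 & 0\end{smallmatrix}\right)$ is a square-zero ideal of $T(R,S,M)$ with quotient isomorphic to $R\times S$, whence Lemma~\ref{nil ideal} combined with Lemma~\ref{sub}(b) dispenses with Theorem~\ref{thm 1.11}, and with the restriction on $n$, altogether. The hypothesis on $n$ in the statement is thus an artifact of quoting Theorem~\ref{thm 1.11} (whose proof needs $J(A)$ and $J(B)$ to be nil) rather than a genuine necessity for these four special cases.
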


\begin{proposition}
The power series ring $R[[t]]$ is not $n$-UU for any ring $R$.
\end{proposition}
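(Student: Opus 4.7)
The plan is to exhibit a concrete unit $u \in R[[t]]$ whose $n$-th power minus one fails to be nilpotent, for any nonzero ring $R$. The natural candidate is the ``principal unit'' $u = 1 + t$, since $t$ is central in $R[[t]]$ and $u$ has constant term $1 \in U(R)$, hence is a unit in the power series ring.

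First I would compute $u^n - 1$ explicitly. Because $t$ commutes with every element of $R[[t]]$, the binomial expansion applies verbatim and
\[
u^n - 1 = nt + \binom{n}{2}t^{2} + \cdots + \binom{n}{n-1}t^{n-1} + t^n,
\]
so $u^n - 1$ is actually a polynomial in $t$ of degree exactly $n$, with leading coefficient $1_R \neq 0$.

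Next I would rule out nilpotency of $u^n - 1$ by a degree argument. For any $k \geq 1$, the product $(u^n - 1)^k$ is again a polynomial in the central indeterminate $t$, and its top-degree contribution comes uniquely from multiplying the leading term $t^n$ from each of the $k$ factors. Hence the coefficient of $t^{nk}$ in $(u^n - 1)^k$ equals $(1_R)^k = 1_R$, which is nonzero since $R$ is nonzero. Consequently $(u^n - 1)^k \neq 0$ for every $k \geq 1$, so $u^n - 1 \notin \mathrm{Nil}(R[[t]])$, witnessing that $R[[t]]$ is not $n$-UU.

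There is no real obstacle here; the only subtlety to record is that the argument is insensitive to whether $R$ is commutative, because centrality of $t$ in $R[[t]]$ makes both the binomial expansion of $u^n$ and the leading-coefficient computation of $(u^n-1)^k$ go through unchanged, and that one needs $R \neq 0$ so that $1_R \neq 0$ can serve as the obstruction.
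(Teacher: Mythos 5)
Your proof is correct, and it uses the same witness as the paper --- the unit $1+t$ --- but it is genuinely more self-contained at both steps. The paper establishes that $1+x$ is a unit by invoking Lam's description of the Jacobson radical, $J(R[[x]])=\{a+xf(x) : a\in J(R),\ f(x)\in R[[x]]\}$, so that $x\in J(R[[x]])$ forces $1+x\in U(R[[x]])$; you instead use the elementary constant-term criterion (equivalently, the explicit geometric-series inverse $1-t+t^2-\cdots$), which is lighter machinery for the same conclusion. More importantly, the paper then simply \emph{asserts} that $(1+x)^n$ is not unipotent, whereas you actually prove it: since $t$ is central, $(1+t)^n-1$ is a polynomial of degree exactly $n$ with leading coefficient $1_R$, and in $\bigl((1+t)^n-1\bigr)^k$ the coefficient of $t^{nk}$ can only arise from choosing the top term $t^n$ in every factor, so it equals $1_R\neq 0$ for all $k$. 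This leading-coefficient argument is exactly the missing justification in the paper's one-line proof, and your explicit remark that one needs $R\neq 0$ (so that $1_R\neq 0$) is a point of care the paper omits. In short: same witness, but your route replaces a citation plus an unproved claim with two elementary, fully verified steps.
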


\begin{proof}
Referring to \cite[Exersice 6.5]{lam1991first}, we know that $$J(R[[x]])=\{a+xf(x) | x \in J(R) , f(x) \in R[[x]] \}.$$ So, $1+x \in U(R)$. But $(1+x)^n$ is not an unipotent for every $n \in \mathbb{N}$, as required.
\end{proof}

According to \cite{danchev2016rings} and \cite{cui2020some}, a commutative ring $R$ is a UU (resp. $\pi$-UU) ring if, and only if, the ring of polynomials $R[t]$ is also a UU (resp. $\pi$-UU) ring. Now, we aim to extend these results to $n$-UU rings. The prime radical $N(R)$ of a ring R is defined as the intersection of all prime ideals of $R$. It is known that $N(R)$ is equal to the lower nil-radical ${\rm Nil}_{\ast}(R)$ of $R$. A ring $R$ is called $2$-primal if $N(R)$ is the same as the set (nil-radical) ${\rm Nil}(R)$.

For an endomorphism $\alpha$ of $R$, the ring R is called $\alpha$-compatible if, for any elements $a$ and $b$ in $R$, the equality $ab = 0$ holds if, and only if, $a\alpha(b) = 0$. This definition is given in \cite{hashemi2005polynomial}. In this case, the map $\alpha$ must be injective.

We, thus, have all the ingredients necessary to prove the following assertion.

\begin{proposition}
Let $n$ be either odd, or $n=2^k$, or $6$, or $10$, and $R$ be a $2$-primal ring with $\alpha$ an endomorphism of $R$ such that $R$ is $\alpha$-compatible. The following are equivalent:
\begin{enumerate}
    \item $R[t, \alpha]$ is an $n$-UU ring.
    \item $R$ is an $n$-UU ring.
    \item $J(R) = {\rm Nil}(R)$ and $U^n(R/J(R))={\bar{1}}$.
\end{enumerate}
\end{proposition}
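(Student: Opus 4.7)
My plan is to establish the cycle (1) $\Rightarrow$ (2) $\Rightarrow$ (3) $\Rightarrow$ (1). The first implication is immediate from Lemma~\ref{sub}(a), since $R$ embeds as a subring of $R[t,\alpha]$ via the constant polynomials. The direction (3) $\Rightarrow$ (2) is also routine: every $u \in U(R)$ descends to a unit of $R/J(R)$ whose $n$-th power is $\bar 1$ by hypothesis, whence $u^n - 1 \in J(R) = {\rm Nil}(R)$.

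For (2) $\Rightarrow$ (3), I would invoke Corollary~\ref{cor 1.10} (this is precisely where the restriction on $n$ is consumed) to conclude that $J(R)$ is nil and that $R/J(R)$ is again $n$-UU. Two-primality of $R$ gives ${\rm Nil}(R) = N(R) \subseteq J(R)$, while $J(R)$ being nil forces the reverse inclusion, so $J(R) = {\rm Nil}(R)$. Moreover, $R/{\rm Nil}(R)$ is reduced (a standard consequence of $2$-primality), so the $n$-UU condition on $R/J(R)$ collapses to $\bar u^n = \bar 1$ for every unit, yielding $U^n(R/J(R)) = \{\bar 1\}$.

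The crux is (3) $\Rightarrow$ (1). The key input I would draw from the $\alpha$-compatible / $2$-primal theory (as in \cite{hashemi2005polynomial}) is that
\[
{\rm Nil}(R[t,\alpha]) = {\rm Nil}(R)[t,\alpha],
\]
and consequently a polynomial $u = \sum_{i=0}^{k} a_i t^i$ lies in $U(R[t,\alpha])$ if and only if $a_0 \in U(R)$ and $a_i \in {\rm Nil}(R)$ for all $i \geq 1$. Given such a unit, I would write $u = a_0 + g$ with $g \in {\rm Nil}(R)[t,\alpha] = {\rm Nil}(R[t,\alpha])$, and pass to the quotient $\overline{R[t,\alpha]} = (R/{\rm Nil}(R))[t,\bar\alpha]$, where the image of $u$ is just $\bar a_0 \in U(R/J(R))$. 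Hypothesis (3) then gives $\bar a_0^n = \bar 1$, and therefore $u^n - 1 \in {\rm Nil}(R[t,\alpha])$, which is exactly the $n$-UU condition on $R[t,\alpha]$.

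The main obstacle is securing the structural description of nilpotents and units in $R[t,\alpha]$ that underlies (3) $\Rightarrow$ (1); everything else is bookkeeping with Lemma~\ref{sub}(a), Corollary~\ref{cor 1.10}, and the definition of $2$-primality. The description rests on $\alpha$-compatibility, which ensures that the naive commutative argument ``constant term a unit, higher coefficients nilpotent'' transfers intact to the skew setting, because the identity $ab = 0 \iff a\alpha(b) = 0$ lets one iterate the vanishing of products of coefficients across the twist by $\alpha$ without loss.
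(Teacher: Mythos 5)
Your proposal is correct and follows essentially the same route as the paper: the cycle (1) $\Rightarrow$ (2) $\Rightarrow$ (3) $\Rightarrow$ (1), with Corollary~\ref{cor 1.10} consuming the restriction on $n$ in (2) $\Rightarrow$ (3), and the crux (3) $\Rightarrow$ (1) handled by reducing modulo the nil ideal ${\rm Nil}(R)[t,\alpha] = J(R)[t,\alpha]$ and using compatibility theory to see that units of the reduced quotient $(R/J(R))[t,\bar\alpha]$ are constants. The only difference is packaging and attribution: the paper carries out this last step via Lemma~\ref{nil ideal} together with \cite[Corollary 2.12]{chen2015constant}, \cite[Lemma 2.2]{chen2015constant} and \cite[Lemma 2.4]{ouyang2011weak}, which are the precise references your appeal to ``the $\alpha$-compatible/$2$-primal theory (as in \cite{hashemi2005polynomial})'' and your unproved ``consequently'' (the unit characterization does not follow from the nilpotent characterization alone) should point to.
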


\begin{proof}
(i) $\Rightarrow$ (ii).  It is obvious.\\

(ii) $\Rightarrow$ (iii). Since $R$ is a $2$-primal ring, we deduce ${\rm Nil}(R)={\rm Nil}_{\ast}(R) \subseteq J(R)$. On the other hand, since $R$ is an $n$-UU ring, one must be that $J(R) \subseteq {\rm Nil}(R)$. Thus, ${\rm Nil}(R)= J(R)$, whence the quotient $R/J(R)$ is reduced, which gives $U^n(R/J(R))={\bar{1}}$.\\

(iii) $\Rightarrow$ (i). Since $J(R)={\rm Nil}(R)$, the factor $R/J(R)$ is reduced and we get $\alpha(J(R)) \subseteq J(R)$. However, $\overline{\alpha}:R/J(R) \to R/J(R)$, defined by $\overline{\alpha}(a+J(R))=\alpha(a)+J(R)$, is an endomorphism of $R/J(R)$. Moreover, by \cite[Lemma 2.4]{ouyang2011weak}, the ring $R/J(R)$ is $\alpha$-compatible. So, by \cite[Corollary 2.12]{chen2015constant}, we derive $$U(R/J(R)[x, \alpha])=U(R/J(R)).$$ Thus, $R/J(R)[x, \alpha]$ is an $n$-UU ring.

Furthermore, as $$R/J(R)[x, \alpha] \cong R[x, \alpha]/J(R)[x, \alpha],$$ it follows that $R[x, \alpha]/ J(R)[x, \alpha]$ is an $n$-UU ring. But, utilizing \cite[Lemma 2.2]{chen2015constant}, we know that ${\rm Nil}_{\ast}(R[x, \alpha])={\rm Nil}_{\ast}(R)[x, \alpha]$. So, the ideal $$J(R)[x, \alpha] = {\rm Nil}_{\ast}(R)[x, \alpha] = {\rm Nil}_{\ast}(R[x, \alpha])$$ is nil, as pursued.
\end{proof}

As a valuable consequence, we extract:

\begin{corollary}
Let $n$ be either odd, or $n=2^k$, or $6$, or $10$. Then, a $2$-primal ring $R$ is an $n$-UU ring if, and only if, $R[t]$ is an $n$-UU ring, if and only if, $J(R) = {\rm Nil}(R)$ and $U^n(R/J(R))={\bar{1}}$.
\end{corollary}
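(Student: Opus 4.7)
The plan is to obtain the corollary as an immediate specialization of the preceding proposition, taking $\alpha$ to be the identity endomorphism $\mathrm{id}_R$ of $R$. First I would note that every ring is trivially $\mathrm{id}_R$-compatible, since the defining condition $ab = 0 \iff a\,\mathrm{id}_R(b) = 0$ reduces to the tautology $ab = 0 \iff ab = 0$. Consequently the hypothesis of the previous proposition is automatically met for any $2$-primal ring $R$ together with $\alpha = \mathrm{id}_R$.

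Next I would identify the skew polynomial ring with the ordinary polynomial ring: when $\alpha = \mathrm{id}_R$, multiplication in $R[t,\alpha]$ coincides with ordinary polynomial multiplication, so $R[t,\mathrm{id}_R] = R[t]$ as rings. Substituting this identification into the three equivalent conditions of the proposition yields exactly the three conditions of the corollary, namely that $R[t]$ is $n$-UU, $R$ is $n$-UU, and $J(R) = \mathrm{Nil}(R)$ with $U^n(R/J(R)) = \{\bar{1}\}$.

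The only thing to double-check is that the restriction on $n$ (odd, or $2^k$, or $6$, or $10$) matches between the two statements, which it does verbatim. Since no further calculation is required, the entire proof will be a one-line invocation. There is no real obstacle: the corollary is simply the specialization at $\alpha = \mathrm{id}_R$ and no additional verification beyond the trivial compatibility is needed.
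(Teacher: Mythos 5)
Your proposal is correct and matches the paper's intent exactly: the paper presents this corollary with no separate proof, precisely because it is the immediate specialization of the preceding proposition at $\alpha = \mathrm{id}_R$, using that every ring is trivially $\mathrm{id}_R$-compatible and that $R[t,\mathrm{id}_R]=R[t]$. Nothing further is needed.
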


We now need the following claim.

\begin{lemma}
Let $D$ be a division ring such that $|D| > 2$. Suppose that $n \in \mathbb{N}$ is a constant, and $f(x)=1-x^n \in \mathbb{Z}[x]$. If there exists $d \in D \backslash \left\{ 0 \right\}$ such that $f(d)\neq 0$, then ${\rm M}_m(D)$ is not an $n$-UU ring for any $m\geq 2$.
\end{lemma}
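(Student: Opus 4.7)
The plan is to reduce to the case $m=2$ and then exhibit a single explicit unit whose $n$-th power fails to be unipotent.

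First, I would invoke Corollary~\ref{corner}: every corner $eRe$ of an $n$-UU ring is again $n$-UU. Taking $e = E_{11} + E_{22}$ (the sum of the first two diagonal matrix units) as an idempotent in ${\rm M}_m(D)$ for $m \geq 2$, the corner $e\,{\rm M}_m(D)\,e$ is canonically isomorphic to ${\rm M}_2(D)$. Hence it suffices to prove that ${\rm M}_2(D)$ is not an $n$-UU ring under the given hypothesis.

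Next, I would use the hypothesis to pick a nonzero $d \in D$ with $f(d) = 1 - d^n \neq 0$, and consider the unit
\[
U = \begin{pmatrix} d & 0 \\ 0 & 1 \end{pmatrix} \in {\rm GL}_2(D),
\]
whose inverse is $\mathrm{diag}(d^{-1},1)$. Then
\[
U^n - {\rm I}_2 = \begin{pmatrix} d^n - 1 & 0 \\ 0 & 0 \end{pmatrix},
\]
and more generally $(U^n - {\rm I}_2)^k = \mathrm{diag}\bigl((d^n-1)^k, 0\bigr)$ for every $k \geq 1$. Because $D$ is a division ring and $d^n - 1 \neq 0$, all powers $(d^n - 1)^k$ remain nonzero, so $U^n - {\rm I}_2$ is not nilpotent. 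This contradicts ${\rm M}_2(D)$ being $n$-UU and completes the argument.

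There is no substantial obstacle here; the only subtle point is spotting that the diagonal unit $\mathrm{diag}(d,1)$ already detects the failure of the $n$-UU property, so no elaborate matrix construction (as in Examples~\ref{ex matrix 3-UU}--\ref{ex matrix 6-UU}) is needed. The hypothesis $|D|>2$ plays no role in the argument itself; it merely rules out the trivial case $D = \mathbb{Z}_2$ in which no element $d$ with $f(d) \neq 0$ can exist and the conclusion would be vacuous.
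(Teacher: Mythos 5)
Your proposal is correct and is essentially the paper's own argument: the paper uses the very same unit $\mathrm{diag}(d,1)$, writing $I - A = \mathrm{diag}(d,1)^n$ with $A = \mathrm{diag}(f(d),0)$ visibly non-nilpotent in a division ring. You merely make explicit two points the paper leaves implicit, namely the corner-ring reduction from ${\rm M}_m(D)$ to ${\rm M}_2(D)$ and the computation showing $(U^n-{\rm I}_2)^k \neq 0$ for all $k$.
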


\begin{proof}
Suppose there exists $d \in D \backslash \left\{ 0 \right\}$ such that $f(d)\neq 0$. Writing
$$A :=
\begin{pmatrix}
 f(d)& 0 \\
 0& 0
\end{pmatrix},$$ then one inspects that
$$I-A=
 \begin{pmatrix}
 d& 0 \\
 0& 1
\end{pmatrix}^n,$$
where $$\begin{pmatrix}
 d& 0 \\
 0& 1
\end{pmatrix} \in U(R).$$ So, this manifestly implies that the ring ${\rm M}_m(D)$ is {\it not} an $n$-UU ring, as asserted.
\end{proof}

We now establish the following two reduction statements.

\begin{lemma}
If $R$ is a ring and $m, n \in \mathbb{N}$ such that $(m, n) = d$, then $R$ is a $d$-UU ring, provided $R$ is simultaneously $m$-UU and $n$-UU.
\end{lemma}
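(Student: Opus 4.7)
The plan is to use B\'ezout's identity together with the fact that for any commuting pair of nilpotents $a,b\in \mathrm{Nil}(R)$, both $(1+a)(1+b)$ and $(1+a)^{-1}$ lie in $1+\mathrm{Nil}(R)$. So the first step would be to invoke B\'ezout to produce integers $s,t\in\mathbb Z$ with $sm+tn=d$. Then, given any $u\in U(R)$, the $m$-UU and $n$-UU hypotheses furnish $q_1,q_2\in \mathrm{Nil}(R)$ with $u^m=1+q_1$ and $u^n=1+q_2$.

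Next I would write
\[
u^d \;=\; u^{sm+tn} \;=\; (u^m)^s(u^n)^t \;=\; (1+q_1)^s(1+q_2)^t,
\]
handling the possibility of negative exponents by noting that $u$, and hence $u^m$ and $u^n$, are units, and that the inverse of $1+q_i$ lies in $1+\mathrm{Nil}(R)$ via the terminating geometric series $(1+q_i)^{-1}=\sum_{j\ge 0}(-q_i)^j$. Hence $(1+q_i)^{\pm 1}\in 1+\mathrm{Nil}(R)$, and by a trivial induction $(1+q_i)^s\in 1+\mathrm{Nil}(R)$ for every integer $s$. Say $(1+q_1)^s=1+r_1$ and $(1+q_2)^t=1+r_2$ with $r_1,r_2\in\mathrm{Nil}(R)$.

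The crucial observation is that $q_1=u^m-1$ and $q_2=u^n-1$ are both polynomials in $u$, so they commute with each other; consequently $r_1,r_2$ commute as well. Therefore
\[
u^d \;=\; (1+r_1)(1+r_2) \;=\; 1+(r_1+r_2+r_1r_2),
\]
and since $r_1,r_2$ are commuting nilpotents, $r_1+r_2+r_1r_2\in \mathrm{Nil}(R)$. This yields $u^d-1\in \mathrm{Nil}(R)$, which is exactly the $d$-UU condition.

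The only point that requires any care is the commutativity used at the final step (without it one cannot conclude that the sum or product of nilpotents is nilpotent); this is handled automatically by the fact that everything in sight is a polynomial in $u$. The negative-exponent case is equally smooth because units of the form $1+(\text{nilpotent})$ are closed under inversion. I do not foresee any serious obstacle beyond keeping the bookkeeping of signs and commutativity straight.
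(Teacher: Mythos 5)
Your proof is correct and takes essentially the same approach as the paper: B\'ezout's identity $sm+tn=d$ combined with the fact that integer powers (including inverses) of unipotents $1+q$, $q\in\mathrm{Nil}(R)$, remain unipotent, with all needed commutativity coming for free because every element in sight is a polynomial in $u$. The only cosmetic difference is that the paper concludes from $u^{-sm}(1-u^d)=u^{-sm}-u^{tn}$ being a difference of commuting nilpotents rather than writing $u^d$ as a product of two commuting unipotents, and your explicit geometric-series treatment of the negative exponent is, if anything, slightly more careful than the paper's appeal to its proposition on positive multiples.
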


\begin{proof}
Given $a \in U(R)$, there exist $q, p \in {\rm Nil}(R)$ such that $a^n = 1 + q$ and $a^m = 1 + p$. Since $(m, n) = d$, there exist $k, k' \in \mathbb{Z}$ such that $km + k'n = d$. Exploiting Proposition \ref{pro 1}, there exist $q', p' \in {\rm Nil}(R)$ such that $a^{k'n} = 1 + q'$ and $a^{km} = 1 + p'$. Consequently, $$a^{-km}(1-a^d) = a^{-km} - a^{k'n} \in {\rm Nil}(R).$$ Since $a \in U(R)$, we have $1-a^d \in {\rm Nil}(R)$, as required.
\end{proof}

As an automatic consequence, we obtain:

\begin{corollary}
If $R$ is a ring and $m, n \in \mathbb{N}$ such that $(m, n) = 1$, then $R$ is an UU ring, provided $R$ is both $m$-UU and $n$-UU.
\end{corollary}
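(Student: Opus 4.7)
The corollary is essentially a direct unpacking of the preceding lemma in the extremal case $d=1$, so the strategy is very short. My plan is simply to invoke that lemma with $d=(m,n)=1$ to conclude that $R$ is a $1$-UU ring, and then observe that being $1$-UU coincides verbatim with the definition of UU.

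More precisely, the plan is: First, assume $R$ is both $m$-UU and $n$-UU, and pick an arbitrary unit $u\in U(R)$. By the previous lemma, since $(m,n)=1$, the ring $R$ is $(m,n)$-UU, i.e.\ $1$-UU. Unwinding the defining condition $u^k\in 1+\mathrm{Nil}(R)$ at $k=1$ yields $u-1\in\mathrm{Nil}(R)$, which is exactly the UU condition $u\in 1+\mathrm{Nil}(R)$. Since $u$ was arbitrary, $R$ is UU.

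There is really no obstacle here; the only thing to be careful about is making sure that the Bezout-style step in the previous lemma is genuinely applicable when $d=1$ (in particular that one of $k,k'$ may be negative, which is already handled by multiplying by the unit $a^{-km}$ in that proof). Since that was established in the lemma itself, nothing new needs to be done, and the corollary follows with one line of argument.
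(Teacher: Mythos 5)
Your proof is correct and follows exactly the paper's route: the paper states this corollary as an ``automatic consequence'' of the preceding lemma, i.e.\ apply it with $d=(m,n)=1$ and note that the $1$-UU condition $u^1\in 1+{\rm Nil}(R)$ is verbatim the UU condition. Nothing further is needed.
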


\subsection{Group rings}

Firstly, we would like to recollect some background material on group rings that we will use in the sequel. In fact, if $R$ is a ring and $G$ is a group, the symbol $RG$ denotes the group ring of the group $G$ over $R$. The ring homomorphism $\omega : RG \to R$, defined by $$\sum r_gg \to \sum r_g,$$ is said to be {\it the augmentation map}, and $ker(\omega)$ is called {\it the augmentation ideal} of the group ring $RG$ and is denoted by $\Delta(RG)$. That is, $$\Delta(RG) = \{\sum r_gg \in RG: \sum r_g = 0\}.$$

\medskip

Next, let $\pi$ be a non-empty set of prime numbers. We shall say that an integer $n$ is {\it a $\pi$-number} if all its prime divisors lie within the set $\pi$. Let $G$ be a group and let $g\in G$ be an element of finite order, say ${\rm o}(g)$. Then, the element $g$ is called {\it a $\pi$-element} if ${\rm o}(g)$ is a $\pi$-number. Likewise, $G$ is said to be {\it a $\pi$-torsion group} if every element of $G$ is a $\pi$-element. Specifically, if $\pi = \{\ p\}$, then $G$ is the standard $p$-group. Therefore, in a $p$-group, the order of every element is a power of the prime number $p$. If, however, $G$ does not possess $\pi$-torsion elements other than the identity, then $G$ is known as a torsion-free $\pi$-group. Moreover, $G$ is termed {\it locally finite} if every finitely generated subgroup of $G$ is finite. It is clear that every finite group is locally finite, but the converse fails.

\medskip

Our purpose here is to expand \cite[Theorem 2.1]{DA} pertaining to the criterion of ({\it not} necessarily commutative) UU group rings over locally finite groups. Before doing that, we need the following pivotal statements.

\begin{theorem}\cite[Theorem 3.2]{danchev2016rings} \label{unipo thm}
Let $R$ be a ring with ${\rm char} (R) = m < \infty$. If $a \in R$ is such that $a^{s+1} = 0$ for some integer $s \ge 0$, then $(1 - a)^{m^s} = 1$.
\end{theorem}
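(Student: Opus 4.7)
The plan is to prove the result by induction on $s$. The base case $s=0$ is immediate, since $a^{1}=0$ forces $a=0$ and then $(1-a)^{m^{0}}=1$. For the inductive step, I will exploit a reduction that raises $1-a$ to the $m$-th power, uses the characteristic to kill the linear term, and applies the induction hypothesis to the resulting element.

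Concretely, because $a$ commutes with $1$ (and with itself), the binomial theorem is valid inside the commutative subring $\mathbb{Z}[a]\subseteq R$:
\begin{equation*}
(1-a)^{m}=1-ma+\sum_{k=2}^{m}\binom{m}{k}(-a)^{k}.
\end{equation*}
The hypothesis $\mathrm{char}(R)=m$ annihilates $ma$, so $(1-a)^{m}=1-c$, where
\begin{equation*}
c=-\sum_{k=2}^{m}\binom{m}{k}(-a)^{k}\in a^{2}\,\mathbb{Z}[a].
\end{equation*}
Since $c$ lies in $a^{2}\,\mathbb{Z}[a]$, its $s$-th power satisfies $c^{s}\in a^{2s}\,\mathbb{Z}[a]$. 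Provided $s\ge 1$, the inequality $2s\ge s+1$ combined with $a^{s+1}=0$ forces $c^{s}=0$.

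Applying the induction hypothesis to the element $c$ (whose nilpotency index is at most $s$, i.e.\ $c^{(s-1)+1}=0$) yields $(1-c)^{m^{s-1}}=1$. Therefore
\begin{equation*}
(1-a)^{m^{s}}=\bigl((1-a)^{m}\bigr)^{m^{s-1}}=(1-c)^{m^{s-1}}=1,
\end{equation*}
which closes the induction. Non-commutativity of $R$ never intervenes because every manipulation takes place inside the commutative subring generated by $a$. The only step that requires genuine care is verifying that the reduced element $c$ satisfies the hypothesis of the inductive statement at level $s-1$, and this boils down to the elementary estimate $2s\ge s+1$ — precisely what makes the recursion $s\mapsto s-1$ work cleanly from $s=1$ onward. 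An alternative route would expand $(1-a)^{m^{s}}$ directly by the binomial theorem and show via a prime-by-prime $p$-adic valuation analysis that $m\mid\binom{m^{s}}{k}$ for $1\le k\le s$; I prefer the inductive argument above because it bypasses the need to worry that the divisibility $m\mid\binom{m^{s}}{k}$ can fail badly when $m$ is composite and $k$ is large (e.g.\ $m=6$, $k=2$).
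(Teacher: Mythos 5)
Your proof is correct. Note that the paper itself does not prove this statement --- it is quoted verbatim, with a citation, from \cite{danchev2016rings}, so there is no in-paper argument to compare against; your induction on $s$ (use ${\rm char}(R)=m$ to kill the linear term of $(1-a)^m$, observe the remainder $c$ lies in $a^2\,\mathbb{Z}[a]$ so that $c^s=0$, then recurse) is the natural self-contained proof, with all manipulations legitimately confined to the commutative subring $\mathbb{Z}[a]$ and the exponent bookkeeping $2s\ge s+1$ handled correctly. One minor remark: the worry in your closing paragraph about the direct binomial route is actually unfounded, since the standard valuation inequality $v_p\binom{n}{k}\ge v_p(n)-v_p(k)$ gives $v_p\binom{m^s}{k}\ge s\,v_p(m)-v_p(k)\ge v_p(m)$ for all $1\le k\le s$ (using $s-1\ge \log_2 s$), hence $m\mid\binom{m^s}{k}$ in exactly the range where the terms do not already vanish; your counterexample $m=6$, $k=2$ concerns $s=1$, where only $k=1$ is relevant. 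This does not affect the validity of your argument --- it only means the alternative route you dismissed also works.
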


We have now accumulated all the information necessary to prove the following major for this subsection result.

\begin{theorem}\label{thm 1 group ring}
Let $R$ be a ring with ${\rm char} (R) = m$ and let $G$ be a group. Also, for positive integers $m > 1$, $n \ge 1$, assume $\pi$ is the set of prime divisors of $m$ and $n$. If $RG$ is an $n$-UU ring, then $R$ is an $n$-UU ring and $G$ is a $\pi$-torsion group.
\end{theorem}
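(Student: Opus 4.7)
The plan is to prove the two conclusions separately, exploiting Lemma~\ref{sub}(a), which says subrings of $n$-UU rings are $n$-UU. For the first conclusion, the canonical embedding $R\hookrightarrow RG$, $r\mapsto r\cdot 1_G$, makes $R$ a subring of $RG$, so $R$ is immediately $n$-UU. The remainder of the argument is devoted to showing every $g\in G$ is a $\pi$-element, which I split into two parts: ``$g$ has finite order'' and ``$o(g)$ is a $\pi$-number''.

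For finite order, I consider the subring $R\langle g\rangle\subseteq RG$, which is $n$-UU by Lemma~\ref{sub}(a). If $g$ had infinite order, then $R\langle g\rangle\cong R[t,t^{-1}]$ and $t$ is a unit, so $(t^n-1)^s = \sum_{i=0}^{s}\binom{s}{i}(-1)^{s-i}t^{ni}$ would have to vanish for some $s\ge 1$. However its leading coefficient equals $1$, which is nonzero in $R$ (since $m>1$ forces $R\neq 0$), a contradiction.

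For $o(g)$ being a $\pi$-number, I argue by contradiction: suppose some prime $p\mid o(g)$ lies outside $\pi$, so that $p\nmid m$ and $p\nmid n$. Replacing $g$ by $h:=g^{o(g)/p}$, an element of order exactly $p$, the subring $R\langle h\rangle\cong R[t]/(t^p-1)$ is $n$-UU, whence $t^n-1$ is nilpotent there. Since $\gcd(p,m)=1$ and $\mathrm{char}(R)=m$, the integer $p$ is a unit in $R$; using $f(1)=p\in U(R)$ with $f(t)=1+t+\cdots+t^{p-1}$, Bezout produces an $R[t]$-combination of $t-1$ and $f(t)$ equal to $1$, giving a CRT decomposition
\[
R[t]/(t^p-1)\ \cong\ R\times R[t]/f(t),\qquad t\mapsto(1,\bar t),
\]
under which $t^n-1\mapsto(0,\bar t^n-1)$. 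Hence $\bar t^n-1$ is nilpotent in $S:=R[t]/f(t)$.

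The final step---which I expect to be the main obstacle---is extracting a contradiction from the nilpotency of $\bar t^n-1$ in $S$, for which I pass to a residue field. Since $f$ is monic of positive degree, $S$ is a free $R$-module of rank $p-1\ge 1$, so $S\neq 0$; picking a maximal ideal $\mathfrak{m}$ of $S$, forming the field $K:=S/\mathfrak{m}$, and letting $\alpha$ denote the image of $\bar t$, one has $\alpha^p=1$ and $\alpha\neq 1$, because $\alpha=1$ would give $0=f(\alpha)=p\cdot 1_K$, contradicting that the unit $p\in R$ maps to a unit of the nonzero ring $K$. Therefore $\alpha$ has exact order $p$, and because $\gcd(n,p)=1$ we also have $\alpha^n\neq 1$; so $\alpha^n-1$ is a nonzero, hence non-nilpotent, element of the field $K$---contradicting the nilpotency transferred from $S$. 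The delicate bookkeeping lies in the CRT step and the non-triviality of $S$; these go through verbatim even for noncommutative $R$ because $t$ is central in every ring in sight.
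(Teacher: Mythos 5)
Your reduction to the cyclic subring $R\langle g\rangle$ via Lemma~\ref{sub}(a), the Laurent--polynomial argument ruling out infinite order, and the CRT splitting $R[t]/(t^p-1)\cong R\times S$ with $S=R[t]/(f(t))$ are all sound, and they do survive noncommutativity of $R$ because $t-1$, $f(t)$ and $p^{-1}$ are central. The genuine gap is in your final step: you pass to a maximal ideal $\mathfrak{m}$ of $S$ and call $K:=S/\mathfrak{m}$ a \emph{field}. The paper's standing hypothesis is that $R$ is an arbitrary associative ring, so $S$ is in general noncommutative, and the quotient of a ring by a maximal two-sided ideal is then only a \emph{simple ring}, not a field or even a division ring. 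Your concluding inference ``$\alpha^n-1$ is nonzero, hence non-nilpotent'' is exactly what fails in a simple ring: ${\rm M}_2(\mathbb{Z}_2)$ is simple and has plenty of nonzero nilpotents. So for noncommutative $R$ no contradiction has been reached, and your closing remark that the noncommutative subtleties lie only in the CRT step and the nontriviality of $S$ overlooks precisely this point.

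The gap is reparable with an observation you already have in hand: $\alpha$ is \emph{central} in $K$, and in a simple ring a central nilpotent element $z$ must vanish (the two-sided ideal $zK$ is nilpotent, hence proper, hence zero). Thus $\alpha^n=1$; combined with $\alpha^p=1$ and $\gcd(n,p)=1$ this forces $\alpha=1$, whence $0=f(\alpha)=p\cdot 1_K$, contradicting that the central unit $p\cdot 1_R$ maps to a unit of the nonzero ring $K$. (One can even avoid maximal ideals: pick $a>0$ with $an\equiv 1 \pmod p$; then $\bar t=(\bar t^{\,n})^a=1+z'$ with $z'$ central nilpotent in $S$, and $0=f(\bar t)=p\cdot 1_S+z'g(z')$ for some $g\in\mathbb{Z}[x]$, exhibiting the unit $p\cdot 1_S$ as nilpotent in the nonzero ring $S$, a contradiction.) For comparison, the paper's proof is far shorter and avoids the case split entirely: writing $g^n=1-q$ with $q^{s+1}=0$ in $RG$, Theorem~\ref{unipo thm} gives $(1-q)^{m^s}=1$, so $g^{nm^s}=1$ and $o(g)$ divides the $\pi$-number $nm^s$, which yields finiteness of the order and its $\pi$-ness in one stroke.
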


\begin{proof}
Since $R \subseteq RG$ may be viewed as a natural embedding, in view of Proposition \ref{sub}, $R$ is an $n$-UU ring. Now, if $g \in G$, then $g \in U(RG)$, and since $RG$ is an $n$-UU ring we have $g^n=1-q$, where $q \in {\rm Nil}(RG)$. Assuming that $q^{s+1}=0$ for some $s\in \mathbb{N}$, then Theorem \ref{unipo thm} gives that $(1-q)^{m^s}=1$, whence
$$g^{nm^s}=(g^n)^{m^s}=(1-q)^{m^s}=1,$$
as required.
\end{proof}

\begin{proposition}\label{pro 1 group ring}
Let $R$ be an $n$-UU ring such that $p \in {\rm Nil}(R)$, where $p$ is a prime number. Also, suppose that $G$ is a locally finite $p$-group. Then, $RG$ is an $n$-UU ring.
\end{proposition}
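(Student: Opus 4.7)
The plan is to exhibit $\Delta(RG)$ as a nil ideal of $RG$ and then invoke Lemma~\ref{nil ideal}. The augmentation map $\omega : RG \to R$, $\sum r_g g \mapsto \sum r_g$, is a surjective ring homomorphism with kernel $\Delta(RG)$, so $RG/\Delta(RG) \cong R$. Since $R$ is $n$-UU by hypothesis, Lemma~\ref{nil ideal} will immediately yield that $RG$ is $n$-UU the moment we know that $\Delta(RG)$ is a nil ideal.

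To establish nil-ness, pick $\alpha \in \Delta(RG)$. The support of $\alpha$ is a finite subset of $G$, so by local finiteness the subgroup $H \le G$ it generates is finite; because $G$ is a $p$-group, $H$ is in fact a finite $p$-group, and $\alpha \in \Delta(RH) \subseteq \Delta(RG)$. Thus it suffices to prove the finite analogue: whenever $H$ is a finite $p$-group and $p \cdot 1_R$ is nilpotent in $R$, the ideal $\Delta(RH)$ is nilpotent (hence nil).

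I would prove this by induction on $|H|$. The case $|H| = 1$ is trivial. For the inductive step, use that a nontrivial finite $p$-group has nontrivial center to pick a central element $z \in H$ with $z^p = 1$. Put $Z = \langle z \rangle$ and $I = (z-1)RH$; because $z-1$ is central, $I$ is a two-sided ideal and $I^N = (z-1)^N RH$ for every $N$. A binomial expansion, using $p \mid \binom{p}{i}$ for $1 \le i \le p-1$, gives $(z-1)^p \in pRH$ (in the two parity cases: for odd $p$ the top and constant terms of the expansion cancel, and for $p=2$ one computes directly $(z-1)^2 = -2(z-1)$). Since $p$ is central and $p^t \cdot 1_R = 0$ for some $t$, iteration gives $(z-1)^{pt} = 0$, so $I$ is nilpotent. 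A routine coset argument identifies the kernel of the natural projection $RH \to R(H/Z)$ with $I$, hence $RH/I \cong R(H/Z)$ and, under this isomorphism, $\Delta(RH)/I \cong \Delta(R(H/Z))$. By the inductive hypothesis, $\Delta(R(H/Z))^a = 0$ for some $a$, i.e.\ $\Delta(RH)^a \subseteq I$; therefore $\Delta(RH)^{a\cdot pt} \subseteq I^{pt} = 0$, as needed.

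The main obstacle is the central-element binomial computation for $(z-1)^p$, together with handling the parity split between odd $p$ and $p = 2$; everything else (the reduction from $G$ to finite $p$-subgroups via local finiteness, the identification of $\ker(RH \to R(H/Z))$ with $(z-1)RH$, and the elementary ``extension of nilpotent by nilpotent is nilpotent'' principle) is routine bookkeeping, and the concluding application of Lemma~\ref{nil ideal} is immediate.
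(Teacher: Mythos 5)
Your proof is correct, and its overall skeleton is exactly that of the paper: exhibit $\Delta(RG)$ as a nil ideal of $RG$, then combine the isomorphism $RG/\Delta(RG)\cong R$ with Lemma~\ref{nil ideal}. The difference lies entirely in how the nil-ness of $\Delta(RG)$ is obtained: the paper simply cites \cite[Proposition 16]{Connell}, whereas you prove that fact from scratch --- reduction to finite $p$-subgroups via local finiteness, then induction on $|H|$ for a finite $p$-group $H$, using a central element $z$ of order $p$, the binomial computation $(z-1)^p\in pRH$ (with the correct parity split between odd $p$ and $p=2$), the identification $\ker\bigl(RH\to R(H/Z)\bigr)=(z-1)RH$, and the final estimate $\Delta(RH)^{a\cdot pt}\subseteq I^{pt}=0$. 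All of these steps check out, including the point that nilpotence of $\alpha$ inside $RH$ gives nilpotence inside $RG$, so every element of $\Delta(RG)$ is indeed nilpotent. What your route buys is self-containedness: it makes visible exactly where both hypotheses ($p\in{\rm Nil}(R)$ and $G$ locally finite $p$-group) are used, and it actually yields the stronger conclusion that $\Delta(RH)$ is nilpotent (not merely nil) for every finite subgroup $H$. What the paper's citation buys is brevity --- Connell's result delivers the nil-ness of the augmentation ideal in one stroke --- at the cost of sending the reader to an external source.
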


\begin{proof}
According to \cite[Proposition 16]{Connell}, the ideal $\Delta(RG)$ is nil. Furthermore, by combining the isomorphism $R \cong RG/\Delta(RG)$ and Lemma \ref{nil ideal}, the assertion is concluded.
\end{proof}

As three useful consequences, we yield:

\begin{corollary}
Let $R$ be an $n$-UU ring, where $n$ is odd, and also suppose that $G$ is a locally finite $2$-group. Then, $RG$ is an $n$-UU ring.
\end{corollary}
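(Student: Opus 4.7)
The plan is to reduce this corollary to the immediately preceding Proposition~\ref{pro 1 group ring} by choosing the prime $p = 2$. To invoke that proposition, I need two ingredients: $R$ should be $n$-UU (which is given) and the prime $2$ should lie in ${\rm Nil}(R)$. The first ingredient is hypothesis; the second is exactly what Proposition~\ref{2 in J(R)} produces from the assumption that $n$ is odd. So the entire strategy is to chain these two earlier results together.

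First I would cite Proposition~\ref{2 in J(R)}, which asserts that in any $n$-UU ring with $n$ odd the element $2$ is nilpotent (this came from applying the $n$-UU condition to the unit $-1$, using $(-1)^n = -1$ and oddness of $n$). Thus $2 \in {\rm Nil}(R)$, so the prime $p=2$ satisfies the hypothesis $p \in {\rm Nil}(R)$ required by Proposition~\ref{pro 1 group ring}. Next, since $G$ is by assumption a locally finite $2$-group, we are precisely in the setting of Proposition~\ref{pro 1 group ring} with $p=2$. Applying that proposition directly yields that $RG$ is $n$-UU, completing the argument.

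There is no substantive obstacle: the corollary is essentially the specialization of Proposition~\ref{pro 1 group ring} obtained by observing that the nilpotence of $2$ in $R$ is automatic under the oddness hypothesis on $n$. The only thing to double-check in the writeup is that the chain of citations is self-contained, i.e.\ that Proposition~\ref{2 in J(R)} really does apply (it does: the hypothesis is precisely that $R$ is $n$-UU with $n$ odd), and that no additional compatibility between $R$ and $G$ is needed beyond what Proposition~\ref{pro 1 group ring} already assumes (it isn't). Accordingly, the proof will be a two-sentence deduction rather than a computation.
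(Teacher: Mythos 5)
Your proof is correct and is precisely the intended derivation: the paper states this corollary as an immediate consequence of Proposition~\ref{pro 1 group ring}, with the nilpotence of $2$ supplied by Proposition~\ref{2 in J(R)} under the oddness hypothesis on $n$, exactly as you argue. Nothing is missing; the two-citation chain is self-contained.
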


\begin{corollary}\cite[Theorem 2.1]{DA}
Let $G$ be a group and $R$ a ring.
\begin{enumerate}
    \item If $RG$ is a UU ring, then $R$ is a UU ring and $G$ is a $2$-group.
    \item If $G$ is locally finite, then $RG$ is a UU ring if, and only if, $R$ is a UU ring and $G$ is a $2$-group.
\end{enumerate}
\end{corollary}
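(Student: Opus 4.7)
The plan is to obtain both parts as the specialization $n=1$ of the already-established Theorem~\ref{thm 1 group ring} and Proposition~\ref{pro 1 group ring}, since a ring is UU precisely when it is $1$-UU in the paper's terminology (the definition $u \in 1+{\rm Nil}(R)$ is literally $u^{1} \in 1+{\rm Nil}(R)$). The single preparatory observation I need is that any UU ring has $2$ nilpotent: applying the UU identity to the unit $-1$ gives $-1 = 1+q$ for some $q \in {\rm Nil}(R)$, whence $2 = -q \in {\rm Nil}(R)$. In particular, the characteristic of a nonzero UU ring is a power of $2$, hence finite and at least $2$, which is exactly the hypothesis needed to feed into Theorem~\ref{thm 1 group ring}.

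For part~(i), I would first note that $R$ sits inside $RG$ via the natural embedding, so if $RG$ is UU then $2 \in {\rm Nil}(RG)$ descends to $2 \in {\rm Nil}(R)$, and therefore $m := {\rm char}(R) = 2^j$ for some $j \geq 1$ in the nontrivial case $R \neq 0$. Setting $n = 1$, the set $\pi$ of prime divisors of $m$ and $n$ is simply $\{2\}$, and Theorem~\ref{thm 1 group ring} applies to give simultaneously that $R$ is $1$-UU (i.e., UU) and that $G$ is a $\{2\}$-torsion group, i.e., a $2$-group.

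For part~(ii), the forward direction is just (i). For the converse, assume $R$ is UU and $G$ is a locally finite $2$-group; then $2 \in {\rm Nil}(R)$ as above and $R$ is $1$-UU, so Proposition~\ref{pro 1 group ring} with $n = 1$ and $p = 2$ applies directly and yields that $RG$ is $1$-UU, which is precisely UU. I do not anticipate any genuine obstacle here, since the corollary is essentially a repackaging of the preceding two results; the only delicate point is verifying the hypothesis $m > 1$ of Theorem~\ref{thm 1 group ring}, which is handled exactly by the $-1$ trick of Proposition~\ref{2 in J(R)}.
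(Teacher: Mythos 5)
Your proposal is correct and takes essentially the same approach as the paper: both reduce the corollary to the $n=1$ case of Theorem~\ref{thm 1 group ring} and Proposition~\ref{pro 1 group ring}, after first securing that a UU ring has $2$ nilpotent and hence characteristic a positive power of $2$. The only difference is that the paper cites \cite[Theorem 2.6(1)]{danchev2016rings} for this preliminary fact, whereas you derive it self-containedly via the $-1$ trick (as in Proposition~\ref{2 in J(R)}), which is equally valid.
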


\begin{proof}
Employing \cite[Theorem 2.6(1)]{danchev2016rings}, one knows that $2 \in {\rm Nil}(R)$ and ${\rm char}(R)=2^t$ where $t \ge 0$. Therefore, by setting $n=1$ in Theorem \ref{thm 1 group ring} and Proposition \ref{pro 1 group ring}, the proof is completed.
\end{proof}

\begin{lemma}
Let $R$ be a ring with ${\rm char}(R)=p^k$, where $p$ is a prime number, and let $G$ be a torsion-free $\pi$-group, where $\pi$ is the set of prime divisors of $n$. If $RG$ is an $n$-UU ring, then $R$ is an $n$-UU ring and $G$ is a $p$-group.
\end{lemma}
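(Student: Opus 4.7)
The plan is to combine Theorem~\ref{thm 1 group ring} with Lemma~\ref{sub}(a) and then do a short order-theoretic argument on $G$.

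First, I would handle the easy half. Since $R$ embeds as a unital subring of $RG$ via $r \mapsto r \cdot 1_G$, Lemma~\ref{sub}(a) immediately gives that $R$ is $n$-UU. So the whole task reduces to showing $G$ is a $p$-group.

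For the main claim, I apply Theorem~\ref{thm 1 group ring} with $m={\rm char}(R)=p^k$. The set of prime divisors of $m$ and of $n$ is exactly $\pi \cup \{p\}$, so Theorem~\ref{thm 1 group ring} tells us that $G$ is a $(\pi\cup\{p\})$-torsion group, i.e.\ every $g\in G$ has finite order whose prime divisors all lie in $\pi\cup\{p\}$. Concretely, examining the proof of Theorem~\ref{thm 1 group ring}, each $g\in G$ satisfies $g^{np^{ks}}=1$ for some $s=s(g)$, so $o(g)\mid np^{ks}$.

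Next I would fix a non-identity $g\in G$ and factor its order as $o(g)=d\cdot p^a$ with $\gcd(d,p)=1$. Since $o(g)$ divides $n p^{ks}$ and $\gcd(d,p)=1$, the cofactor $d$ must divide $n$, hence $d$ is a $\pi$-number. Now consider the element $h=g^{p^a}\in G$, which has order $d$. If $d>1$ then $h$ is a nontrivial element of $G$ whose order is a $\pi$-number, contradicting the assumption that $G$ is a torsion-free $\pi$-group (i.e.\ has no nontrivial $\pi$-torsion elements). Therefore $d=1$, and $o(g)=p^a$ is a power of $p$. Since $g$ was arbitrary, $G$ is a $p$-group, which completes the proof.

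I do not anticipate a serious obstacle here; the content is really just unwinding the two defined notions. The one mild subtlety to be careful about is the edge case $p\in\pi$: then $\pi\cup\{p\}=\pi$, so Theorem~\ref{thm 1 group ring} forces $G$ to be $\pi$-torsion, which together with the torsion-free $\pi$-group hypothesis forces $G=\{1\}$, and the trivial group is vacuously a $p$-group, so no separate treatment is needed.
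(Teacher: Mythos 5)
Your proof is correct and takes essentially the same route as the paper's: both apply Theorem~\ref{thm 1 group ring} to conclude that $R$ is $n$-UU and that $G$ is $(\pi\cup\{p\})$-torsion, then factor the order of an arbitrary $g\in G$ into its $p$-part and prime-to-$p$ part, observe that $g^{p^a}$ is a $\pi$-element, and use the torsion-free $\pi$-group hypothesis to force it to be trivial. Your additional touches (deriving $d\mid n$ explicitly and noting the degenerate case $p\in\pi$) are harmless refinements of the same argument, not a different method.
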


\begin{proof}
According to Theorem \ref{thm 1 group ring}, $R$ is an $n$-UU ring and $G$ is a $(\{p\} \cup \pi)$-torsion group, where $\pi$ is the set of prime divisors of $n$.

Assume $\pi = \{q_1, q_2, \ldots , q_k\}$, where, for every $1 \le i \le k$, $q_i$ is a prime number. Supposing $g \in G$, then $${\rm o}(g)=q_{1}^{\alpha_1} \cdots q_{k}^{\alpha_k}p^{\beta}.$$ However, $${\rm o}(g^{p^{\beta}})=q_{1}^{\alpha_1}...q_{k}^{\alpha_k}.$$ Consequently, the element $g^{p^{\beta}}$ is a $\pi$-element. And since $G$ is a torsion-free $\pi$-group, it must be that $g^{p^{\beta}}=1$, that is, $G$ is a $p$-group, as stated.
\end{proof}

As a consequence, we extract the following.

\begin{corollary}
Let $G$ be a torsion-free $\pi$-group, where $\pi$ is the set of prime divisors of $n$ and $n$ is odd. If $RG$ is an $n$-UU ring, then $R$ is an $n$-UU ring and $G$ is a $2$-group.
\end{corollary}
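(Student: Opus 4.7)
The plan is to derive this corollary as a direct specialization of the preceding lemma, using the fact that ``$n$ odd'' forces the characteristic of $R$ to be a power of $2$, so that the prime $p$ in the lemma is automatically $2$.

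First, I would observe that $R$ sits inside $RG$ as a subring with the same identity, so Lemma~\ref{sub}(a) (or, equivalently, Theorem~\ref{thm 1 group ring}) yields that $R$ is itself an $n$-UU ring. Since $n$ is odd, Proposition~\ref{2 in J(R)} then forces $2 \in \mathrm{Nil}(R)$, so some positive power $2^m$ equals $0$ in $R$. Assuming $R \neq 0$ (the zero case being trivial), this means the characteristic of $R$ is a positive divisor of $2^m$, and hence is of the form $2^j$ for some integer $j \ge 1$.

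Next, I would verify that the hypotheses of the preceding lemma are now all in place with $p = 2$: one has $\mathrm{char}(R) = 2^j$, the group $G$ is a torsion-free $\pi$-group where $\pi$ is the set of prime divisors of $n$, and $RG$ is $n$-UU. Note in particular that since $n$ is odd, $2 \notin \pi$, so $p = 2$ is genuinely a prime distinct from those in $\pi$ (this is what makes the $p$-primary decomposition argument in the lemma nontrivial). Applying the lemma then delivers exactly the conclusion that $G$ is a $2$-group.

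The only minor subtlety is the step translating ``$2 \in \mathrm{Nil}(R)$'' into ``$\mathrm{char}(R)$ is a positive power of $2$,'' which rests on the elementary observation that if $2^m \cdot 1_R = 0$, then the additive order of $1_R$ must be a positive divisor of $2^m$. Beyond that, I do not expect any genuine obstacle, as the corollary is essentially a repackaging of the preceding lemma once the characteristic has been pinned down.
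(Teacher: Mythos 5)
Your proposal is correct and is essentially the paper's own (implicit) derivation: the corollary is stated as a direct consequence of the preceding lemma, obtained exactly as you do by noting that $R$ inherits the $n$-UU property, that $n$ odd forces $2\in{\rm Nil}(R)$ via Proposition~\ref{2 in J(R)}, hence ${\rm char}(R)=2^j$, and then applying the lemma with $p=2$. The only quibble is your aside that the case $R=0$ is ``trivial'' --- for the zero ring the conclusion about $G$ actually fails (since $RG=0$ is vacuously $n$-UU for any $G$) --- but this degenerate case is excluded by the standing convention that rings have $1\neq 0$, so it does not affect the argument.
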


We close our examination with the following challenging question.

\begin{problem}\label{important} Find a necessary and sufficient condition for a group ring to be $n$-UU for an arbitrary $n\geq 2$.
\end{problem}

\medskip
\medskip
\medskip
	
\noindent{\bf Funding:} The first-named author, P.V. Danchev, of this research paper was partially supported by the Junta de Andaluc\'ia under Grant FQM 264, and by the BIDEB 2221 of T\"UB\'ITAK.

\vskip3.0pc

\end{document}